\newcommand{\be}{\begin{equation} }
\newcommand{\ee}{\end{equation}}
\newcommand{\bse}{\begin{subequations}}
\newcommand{\ese}{\end{subequations}}
\newcommand{\LV}{\left|}
\newcommand{\RV}{\right|}
\newcommand{\LN}{\left\|}
\newcommand{\RN}{\right\|}
\newcommand{\LB}{\left[}
\newcommand{\RB}{\right]}
\newcommand{\LC}{\left(}
\newcommand{\RC}{\right)}
\newcommand{\LA}{\left<}
\newcommand{\RA}{\right>}
\newcommand{\LCB}{\left\{}
\newcommand{\RCB}{\right\}}
\newcommand{\R}{\mathbb{R}} 
\newcommand{\N}{\mathbb{N}} 
\newcommand{\Z}{\mathbb{Z}} 
\newcommand{\T}{\mathbb{T}}
\newcommand{\Div}{\text{div}}
\newcommand{\IE}{\mathcal{E}^0}
\newcommand{\id}{\textrm{I}_n}
\newcommand{\sym}{\mathcal{S}^{n\times n}}
\newcommand{\vn}{V_{\text{new}}}
\newcommand{\ro}{{\rho_0}}
\newcommand{\vo}{{V_0}}
\newcommand{\uo}{{U_0}}
\newcommand{\told}{{R_0}}
\newcommand{\tv}{\widetilde V}
\newcommand{\tu}{\widetilde U}
\newcommand{\eigenmin}{\lambda_{\text{min}}}
\newcommand{\eigenmax}{\lambda_{\text{max}}}
\newcommand{\trace}{\text{tr}}
\newcommand{\Uset}{\mathcal{U}}
\theoremstyle{plain} 
\newtheorem{theorem}{Theorem}[section]
\newtheorem{lemma}{Lemma}[section]
\newtheorem{proposition}{Proposition}[section]
\newtheorem{definition}{Definition}[section]
\theoremstyle{remark} 
\newtheorem{remark}{Remark}[section]
\theoremstyle{definition} 
\numberwithin{equation}{section}
\title{Global ill-posedness for a dense set of initial data to the Isentropic system of gas dynamics}
\subjclass[2010]{35Q31, 76N10, 35L65}
\keywords{Global weak solutions, non-uniqueness, convex integration, compressible Euler equations}
\author{Robin Ming Chen}
\address{ Department of Mathematics,
      University of Pittsburgh.}
\email{mingchen@pitt.edu}
\author{Alexis F. Vasseur}
\address{Department of Mathematics,
The University of Texas at Austin.}
\email{vasseur@math.utexas.edu}
\author{Cheng Yu}
\address{Department of Mathematics,
University of Florida.}
\email{chengyu@ufl.edu}
\date{}       
\begin{document}

\begin{abstract}
In dimension  $n=2$ and $3$, we show that for any initial datum belonging to  a dense  subset of the energy space, there exist infinitely many global-in-time admissible weak solutions to the isentropic Euler system whenever $1<\gamma\leq 1+\frac2n$. This result can be regarded as a compressible counterpart of the one obtained by Szekelyhidi--Wiedemann (ARMA, 2012) for incompressible flows. Similarly to the incompressible result, the admissibility condition is defined in its integral form. Our result is based on a generalization of a key step of the convex integration procedure. This generalization allows, even in the compressible case, to convex integrate any smooth positive Reynolds stress. A large family of  subsolutions can then be considered. These subsolutions can be generated, for instance,  via regularization of any weak inviscid limit of an associated compressible Navier--Stokes system with degenerate viscosities. 
\end{abstract}

\thispagestyle{empty}
\maketitle

\setcounter{tocdepth}{1}
\tableofcontents

\section{Introduction}\label{sec Intro}

The motion of a compressible fluid in gas dynamics with constant entropy in  the periodic box $\T^n :=[0,1]^n$ for $n = 2$ or $3$ can be modeled by the isentropic Euler system consisting of $n+1$ dynamical equations for the macroscopic state variables: the gas density $\rho = \rho(x,t)$ and the fluid velocity $v = v(x,t)$. The corresponding Cauchy problem reads
\begin{subequations}\label{isen euler system}
\begin{equation}\label{isentropic euler}
\left\{
\begin{split}
& \partial_t \rho + \Div (\rho v) = 0, \\
& \partial_t (\rho v) + \Div (\rho v \otimes v) + \nabla p(\rho) = 0
\end{split}\right.
\end{equation}
with initial condition
\begin{equation}\label{isentropic euler ic}
\rho|_{t=0} = \rho^0, \quad \rho v|_{t=0} = V^0.
\end{equation}
\end{subequations}
 In the isentropic regime, the pressure $p$  is determined by an isentropic equation of state, $p(\rho) = \rho^\gamma$, where  $\gamma$ is the adiabatic constant of the gas. Throughout the paper, we will assume that 
\begin{equation}\label{restriction_gamma}
1<\gamma\leq 1+\frac{2}{n}.
\end{equation}
In particular, it includes the shallow water equation in dimension 2 ($\gamma$=2), and the monoatomic ideal gas in dimension 3 ($\gamma$=5/3).


\subsection{Admissible weak solutions and main result.}
Solutions to \eqref{isentropic euler} carrying $C^1$ regularity away from vacuum are known to uniquely exist  at least locally in time provided that the initial data are sufficiently smooth. On the other hand, it is well-known that such solutions develop singularities (shock waves) in finite time for a generic class of data; see \cite{smoller2012shock,dafermos2005hyperbolic,benzoni2007multi, BSV}. Understanding how solutions can be extended beyond singularities has been a rich field of study. 

Mathematically, to permit the continuation of solutions after the occurrence of singularity, one is required to work with weak solutions, i.e. bounded solutions to \eqref{isentropic euler} in the sense of distribution. To give a more precise definition, it is more convenient to reformulate \eqref{isentropic euler} in terms of conservative variables $(\rho, V)$ where $V := \rho v$.

\begin{definition}[Global weak admissible solutions for compressible Euler equations]\label{def wk soln comp}
We say $(\rho, V)\in L^\infty(\R_+; L^\gamma(\T^n)) \times L^\infty(\R_+; L^{\frac{2\gamma}{\gamma + 1}}(\T^n))$ is a weak solution of \eqref{isentropic euler} on $\R_+$ if
\begin{subequations}\label{weak soln form}
\begin{itemize}
\item $\rho \ge 0$ a.e. and 
\begin{equation}\label{weak mass eqn}
\int^\infty_0 \int_{\T^n} \LC \rho \partial_t \varphi + V \cdot \nabla \varphi \RC \,dx dt = - \int_{\T^n} \rho^0 \varphi(\cdot, 0) \,dx
\end{equation}
for any $\varphi \in C^\infty_c(\T^n \times \R_+)$.
\item $V = 0$ whenever $\rho = 0$ and
\begin{equation}\label{weak momentum eqn}
\int^\infty_0 \int_{\T^n} \LC V \cdot \partial_t \phi + {V \otimes V \over \rho} : \nabla \phi + p(\rho) \Div \phi \RC\,dxdt = -\int_{\T^n} V^0 \cdot \phi(\cdot, 0) \,dx
\end{equation}
for any $\phi \in C^\infty_c(\T^n \times \R_+; \R^n)$, where $V^0 := \rho^0 v^0$.
\item The following global energy inequality holds
\begin{equation}\label{weak energy ineq}
\int_{\T^n} E(\rho, V)(\cdot, t) \,dx \le \int_{\T^n} E(\rho^0,V^0) \,dx \quad \text{for all }\  t \geq 0, 
\end{equation}
where $E(\rho,V) := \frac{\rho^\gamma}{\gamma-1}+  {|V|^2 \over {2\rho}}$ is the total energy. 
\end{itemize}
\end{subequations}
\end{definition}
In the physical variables $(\rho, V=\rho v)$, $E(\rho,V)= \frac{\rho^\gamma}{\gamma-1}+  \rho{|v|^2 \over 2}$.
The integral form of the global energy inequality \eqref{weak energy ineq} is enough to ensure the strong/weak uniqueness result for Lipschitz solutions (see Dafermos \cite{Dafermos} and Di Perna \cite{DiPerna}). The solutions verifying this condition are  called {\it admissible} in the context of convex integration for incompressible flows (see for instance \cite{SW2012ARMA}).  We recall that Equations \eqref{isentropic euler}, together with the a priori bounds from Definition \eqref{def wk soln comp},  imply that $(\rho,V)$ is bounded in $C^0(\R_+, L^q(\T^n)\text{\it-weak})$, for some $q>1$ depending on $\gamma$. Therefore the function $(\rho, V)$ can be defined for all time $t\in \R_+$ (as a function in $L^q(\T^n)$). This justifies the fact that \eqref{weak energy ineq} makes sense for {\it every} time $t\geq0$. Note however that the meaningful constraint of   Inequality \eqref{weak energy ineq} is  that its right hand side   corresponds to the energy of the initial value. The convexity of the energy $E$ in the variables  $(\rho, V)$ implies that if the  inequality
 \eqref{weak energy ineq} holds   for almost every $t>0$, then it holds actually for {\it every} time $t\geq 0$. Let us  now state the main result of this paper.
\begin{theorem}\label{thm main comp}
Assume that $\gamma$ verifies \eqref{restriction_gamma}. 
Then, for any $\varepsilon > 0$ and any $(\varrho^0, U^0)$ such that $E(\varrho^0, U^0) \in L^1(\T^n)$, there exist infinitely many $(\rho^0, V^0)$ satisfying
\begin{equation}\label{comp ic}
\rho^0 > 0, \qquad E(\rho^0, V^0) \in L^1(\T^n), \qquad \|\rho^0 - \varrho^0\|_{L^\gamma(\T^n)}^\gamma + \LN \frac{V^0}{\sqrt{\rho^0}} - \frac{U^0}{\sqrt{\varrho^0}} \RN_{L^2(\T^n)}^2 < \varepsilon,
\end{equation}
such that, for each of such initial values $(\rho^0, V^0)$, there exist  infinitely many global admissible weak solutions $(\rho, V)$ to the compressible Euler equation \eqref{isen euler system} in the sense of Definition \ref{def wk soln comp}.
\end{theorem}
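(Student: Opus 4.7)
The plan is to carry out a convex integration scheme built on top of a smooth ``subsolution'' of the Euler system. Following the strategy of De Lellis--Sz\'ekelyhidi and its adaptation to the compressible setting (Chiodaroli, Feireisl--Kreml, etc.), one relaxes \eqref{isentropic euler} by introducing a symmetric positive semi-definite ``Reynolds stress'' $\Stress(x,t)$: a subsolution is a triple $(\rho_*, V_*, \Stress)$ with $\rho_*>0$ satisfying the continuity equation and the momentum equation with $V\otimes V/\rho + p(\rho)\,\id$ replaced by $V_*\otimes V_*/\rho_* + p(\rho_*)\,\id - \Stress$, together with an associated energy profile. The generalized convex integration procedure announced in the abstract then produces, from any smooth such subsolution with $\Stress$ smooth and strictly positive, infinitely many genuine weak solutions $(\rho, V)$ with $\rho = \rho_*$ and the \emph{same initial trace} as $(\rho_*, V_*)$, satisfying the pointwise identity $|V|^2/(2\rho_*) = |V_*|^2/(2\rho_*) + \tfrac{1}{2}\trace\Stress$ almost everywhere, which is the mechanism by which the integrated energy of $(\rho, V)$ matches the ``relaxed'' energy of the subsolution.

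First I would construct the subsolution. Given $(\varrho^0, U^0)$ with $E(\varrho^0, U^0)\in L^1$, approximate it by smooth data $(\bar\rho^0, \bar U^0)$ with $\bar\rho^0$ bounded away from zero, whose energy norm is within $\varepsilon/2$ of the original in the distance appearing in \eqref{comp ic} (this is straightforward: mollify $\varrho^0$, add a small constant to keep positivity, and mollify $U^0/\sqrt{\varrho^0}$ in the natural Hilbert structure). Then, as suggested in the abstract, solve the compressible Navier--Stokes system with degenerate viscosities starting from $(\bar\rho^0, \bar U^0)$, take the vanishing-viscosity limit to obtain a weak solution $(\tilde\rho, \tilde V)$ of Euler with some defect measure, and mollify in space-time (keeping $\tilde\rho > 0$ on $[0,T]$) to produce a smooth subsolution $(\rho_*, V_*, \Stress)$ on each compact time interval. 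The regularization errors contribute positive-definite terms to $\Stress$. One verifies that the initial trace of $(\rho_*, V_*)$ is still within $\varepsilon$ of $(\varrho^0, U^0)$ in the distance of \eqref{comp ic}, and that the relaxed energy
\[
\int_{\T^n}\LC \frac{\rho_*^\gamma}{\gamma-1} + \frac{|V_*|^2}{2\rho_*} + \tfrac{1}{2}\trace\Stress\RC(\cdot,t)\,dx
\]
is non-increasing and at $t=0$ bounded above by $\int_{\T^n} E(\rho_*^0, V_*^0)\,dx$.

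Next I would apply the generalized convex integration, keeping the density $\rho_*$ fixed and perturbing $V_*$ by oscillatory increments (Mikado/Beltrami-type plane waves of diminishing wavelength and increasing amplitude). At each iteration the modified Reynolds stress shrinks, and in the limit $|V|^2/(2\rho_*)$ equals $|V_*|^2/(2\rho_*) + \tfrac{1}{2}\trace\Stress$ pointwise. Hence the kinetic-plus-internal energy of $(\rho_*, V)$ at every $t \ge 0$ equals the relaxed energy of the subsolution at $t$, which is dominated by the subsolution's initial relaxed energy, giving \eqref{weak energy ineq} in its integral form. Infinitely many solutions are produced by the usual Baire-category argument on the $X$-valued map $V \mapsto V$ in the space of weak subsolutions, as in Sz\'ekelyhidi--Wiedemann; the condition $1<\gamma\le 1+\frac{2}{n}$ enters precisely at the step where the kinetic perturbation must be compared with $\int \rho_*^\gamma\,dx$ in order to produce admissible oscillations without disturbing the prescribed energy.

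The hardest step, and the raison d'\^etre of the paper, is the generalized convex integration itself: in contrast with the incompressible setting, the density $\rho_*(x,t)$ is inhomogeneous, and the ``target set'' for the momentum perturbation depends on $(x,t)$ through $\rho_*$ and $\Stress$. One must exhibit an oscillatory building block, with amplitude controlled in terms of $\rho_*^{-1}$ and $\trace\Stress$, that is approximately divergence-free (so that the continuity equation is preserved to leading order), together with a geometric lemma expressing any smooth positive $\Stress$ as a convex combination of admissible rank-one directions with smooth $(x,t)$-dependent coefficients. The delicate point is that the compressible wave cone permits less freedom than in the incompressible case, and genericity of the subsolution (in particular $\Stress > 0$ strictly) is needed to stay inside the interior of the convex hull. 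Once this localized construction and the associated perturbation property are established, density of the admissible initial data follows because the approximation $(\bar\rho^0, \bar U^0)\to (\varrho^0, U^0)$ at the first step can be made arbitrarily close in the energy distance.
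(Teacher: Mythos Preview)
Your outline tracks the paper's strategy at a high level (regularize data, build a subsolution via the inviscid limit of degenerate-viscosity Navier--Stokes, mollify, convex integrate with a general positive Reynolds stress via Baire category), but there is a genuine gap at the step linking the subsolution to the initial data and the admissibility condition.

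You assert simultaneously that (a) the convex-integrated solutions share the \emph{same initial trace} as $(\rho_*,V_*)$, (b) the relaxed energy $\int\big(E(\rho_*,V_*)+\tfrac12\trace\Stress\big)\,dx$ at $t=0$ is bounded above by $\int E(\rho_*^0,V_*^0)\,dx$, and (c) $\Stress>0$ strictly everywhere. These three claims are mutually inconsistent: (b) forces $\int\trace\Stress^0\,dx\le 0$, hence $\Stress^0=0$, contradicting (c). And if you retreat to $\Stress^0=0$, the oscillatory perturbation (compactly supported where $\Stress>0$) vanishes at $t=0$, so every solution you build has the \emph{single} initial value $(\rho_*^0,V_*^0)$; you never produce the \emph{infinitely many} $(\rho^0,V^0)$ the theorem demands. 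The paper resolves this by a ``double convex integration'' with a time shift: one first convex integrates the strict subsolution on a short interval $[0,t_0]$, obtaining weak solutions $\hat V$ whose energy equals $\IE$ for a.e.\ $t\in(0,t_0)$; one then selects infinitely many such times $\tilde t$ and sets $(\rho^0,V^0):=(\rho,\hat V)(\tilde t,\cdot)$ as the new initial data. A second convex integration on $[t_0,\infty)$, glued at $t_0$ where both pieces coincide with the subsolution, then yields infinitely many admissible continuations for each such $(\rho^0,V^0)$. This mechanism is absent from your proposal.

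A secondary point: your account of where the restriction $\gamma\le 1+\tfrac2n$ enters is too vague. In the paper the Reynolds stress splits as $R=\mathcal R + r\,\id$, with $r\ge 0$ the pressure defect. Convex integration injects the full $\tfrac12\trace R=\tfrac12\trace\mathcal R+\tfrac{n}{2}r$ into the kinetic energy, whereas the subsolution's energy budget carries $\tfrac12\trace\mathcal R+\tfrac{r}{\gamma-1}$. Matching these on $[0,t_0]$ requires a ``compensating'' term $r_c(t)$ added to $r$, and the inequality $\tfrac{n}{2}\le\tfrac{1}{\gamma-1}$ is exactly what makes this compensation nonnegative. This is not a comparison of the kinetic perturbation with $\int\rho_*^\gamma\,dx$.
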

The above theorem provides a dense subset of the energy space, such that any initial value in this set generates infinitely many energy decreasing global weak solution to the isentropic system \eqref{isen euler system} defined on the whole space $\T^n\times \R_+$. This result can be seen as a compressible counterpart of Theorem 2 from 
Sz\'{e}kelyhidi--Wiedemann \cite{SW2012ARMA} which considers incompressible flows (see discussion in the next subsection). It shows that the isentropic system endowed with the global energy criterion is definitively ill-posed for a dense family of initial values. 

\vskip0.3cm
The proof relies on the {\it convex integration} machinery developed by De Lellis--Sz\'ekelyhidi \cite{de2009euler,de2010admissibility}. Although the focus of their work was first on the incompressible Euler equation, a first application to the compressible isentropic Euler was already  present in \cite{de2010admissibility}. For compressible flows, the general strategy  always involves  constructing global density functions such that a convex integration process can be  performed on the  momentum field $V$. The development of the technique for the isentropic case is following   two main directions. One direction, pioneered by Chiodaroli  in \cite{Chiodaroli-continuous-density}, considers a wide class of initial densities. In this situation, the set of initial momentum $V^0$ cannot be chosen a priori, but depends on the convex integration procedure. The original result \cite{Chiodaroli-continuous-density} treats general $C^1$ initial densities, and was later extended to the case of possibly discontinuous piecewise $C^1$ functions by Luo--Xie--Xin \cite{Xin},
and Feireisl \cite{Feireisl-early}. The other direction, pioneered by Chiodaroli--De Lellis--Kreml \cite{chiodaroli2015global}, focuses on initial values being Riemann data.  They are piece-wise constant functions with a unique planar set of discontinuities. The situation of a shock was first considered, and later  extended to other Riemann problems  (see \cite{brezina2018contact}, \cite{Chiodaroli18}). Extensions of both strategies have been studied for the full Euler system (see for instance  Chiodaroli--Feireisl--Kreml \cite{Full1} or Al Baba--Klingenberg--Kreml--M\'{a}cha--Markfelder \cite{Full2}). A natural problem consists in  studying  the size of  the class of initial values leading to non-unique solutions. Note that the energy condition  \eqref{weak energy ineq} is crucial. Without this admissibility condition, non-unique solutions to \eqref{isen euler system} can be constructed for any fixed initial values (see Abbatiello--Feireisl \cite{Abbatiello}). 

\subsection{The incompressible case.}

Let us now consider an  incompressible ideal flow with density being normalized to unity, whose dynamics is governed by the incompressible Euler equations
\begin{subequations}\label{incomp euler system}
\begin{equation}\label{incomp euler}
\left\{
\begin{split}
& \partial_t v + \Div (v \otimes v) + \nabla p = 0, \\
& \Div \,v = 0, \\
\end{split}\right.
\end{equation}
with initial datum
\begin{equation}\label{incomp euler ic}
v|_{t=0} = v^0,
\end{equation}
\end{subequations}
where now the pressure $p$ arises as a Lagrange multiplier due to the incompressibility condition. For an initial velocity field $v^0 \in L^2(\T^n)$ with $\Div \, v^0 = 0$, the corresponding notion of global  in time admissible weak solutions  is given as follows.
\begin{definition}[Global weak solutions for admissible  incompressible Euler equations]\label{def wk soln incomp}
We say $v \in L^{\infty}(\R_+;L^2(\T^n)) $ is a weak solution of \eqref{incomp euler system} if it is divergence-free in the sense of distribution and
\begin{subequations}\label{weak soln form incomp}
\begin{itemize}
\item for any $\phi \in C^\infty_c(\T^n \times \R_+; \R^n)$ with $\Div \, \phi = 0$,
\begin{equation}\label{weak incomp euler eqn}
\int^\infty_0 \int_{\T^n} \LC v \cdot \partial_t \phi + v \otimes v : \nabla \phi \RC \,dxdt =  -\int_{\T^n} v^0 \cdot \phi(\cdot, 0) \,dx.
\end{equation}
\item The following global energy inequality holds
\begin{equation}\label{weak energy ineq incomp}
\int_{\T^n} \frac12 |v(\cdot, t)|^2 \,dx \le \int_{\T^n} \frac12 |v^0(\cdot)|^2 \,dx \quad \text{for every  }\  t\geq 0. 
\end{equation}
\end{itemize}
\end{subequations}
\end{definition}
Similarly to the compressible case, any such solution actually lies in $C^0(\R_+; L^2(\T^n)\text{\it-weak})$, and so \eqref{weak energy ineq incomp} can be written for {\it every} time $t\geq 0$. However, still because of the convexity of the energy, it is enough to check that Inequality \eqref{weak energy ineq incomp} is true for almost every  $t>0$. We now state our result in the incompressible case.
\begin{theorem}\label{thm main incomp}
For any $\varepsilon > 0$ and any $u^0 \in L^2(\T^n)$, there exist infinitely many $v^0 \in L^2(\T^n)$ satisfying
\begin{equation}\label{incomp ic}
\|v^0 - u^0\|_{L^2(\T^n)}^2 < \varepsilon,
\end{equation}
such that for each such initial value $v^0$, there exist  infinitely many global weak solutions $v$ 
to the incompressible Euler equation \eqref{incomp euler system} in the sense of Definition \ref{def wk soln incomp}.
\end{theorem}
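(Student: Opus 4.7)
The plan is to deploy the De Lellis--Sz\'ekelyhidi convex integration machinery on a carefully engineered smooth subsolution whose initial velocity is close to $u^0$, essentially following the strategy of \cite{SW2012ARMA}. First I would reduce to a smooth target: since every weak solution of \eqref{incomp euler} is divergence-free, we may assume $u^0$ lies in the image of the Leray projection, and mollification then yields a smooth divergence-free $\bar u^0$ with $\LN \bar u^0-u^0\RN_{L^2}^2<\varepsilon/4$. It suffices to find $v^0$ within $L^2$-distance $\sqrt{\varepsilon/4}$ of $\bar u^0$ that launches infinitely many admissible weak solutions.

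The heart of the argument is the construction of a smooth \emph{strict} subsolution on $\T^n\times[0,T]$ for some $T>0$: a smooth divergence-free field $v$, a symmetric traceless matrix field $u$, a pressure $q$, and a smooth energy density $\bar e\ge 0$ satisfying $\p_t v+\Div\,u+\nabla q=0$, together with the pointwise strict inequality
\begin{equation*}
\lambda_{\max}\LC v(x,t)\otimes v(x,t)-u(x,t) \RC<\frac{2\bar e(x,t)}{n},
\end{equation*}
the condition $v(\cdot,0)=v^0$ with $\LN v^0-\bar u^0\RN_{L^2}^2<\varepsilon/4$, and the integral admissibility bound
\begin{equation*}
\int_{\T^n}\bar e(\cdot,t)\,dx\le\int_{\T^n}\frac{|v^0|^2}{2}\,dx \quad \text{for every } t\in[0,T].
\end{equation*}
Concretely, I would let $\bar v$ be the local smooth Euler solution launched from a small divergence-free perturbation of $\bar u^0$ and set $v(x,t)=\eta(t)\bar v(x,t)$, where $\eta(0)=1$ and $\eta$ strictly decreases on $(0,T]$. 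Then choose a small symmetric traceless field $R$ (and $q$) so that $u=v\otimes v-\frac{|v|^2}{n}I+R$ closes the subsolution equation, and design $\bar e$ to sit just above $\frac{n}{2}\lambda_{\max}(v\otimes v-u)$ pointwise while still integrating below $\int|v^0|^2/2$, the slack being provided by the energy deficit $(1-\eta(t)^2)\int|\bar v|^2/2$.

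With this subsolution in hand, the admissibility-preserving convex integration theorem of \cite{de2010admissibility} produces infinitely many $\tilde v\in L^\infty(\T^n\times[0,T];\R^n)$ solving \eqref{incomp euler} weakly with $\tilde v(\cdot,0)=v^0$ and $\frac12|\tilde v(x,t)|^2=\bar e(x,t)$ a.e.\ on $\T^n\times(0,T]$; the integral bound on $\bar e$ then yields \eqref{weak energy ineq incomp} on $[0,T]$. I would extend to $\R_+$ by iterating the construction on successive windows $[kT,(k+1)T]$, using the weak-$L^2$ trace of the previous branch at $t=kT$ as the next initial datum (controlling $T$ uniformly in $k$ via compactness of the energy). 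The infinitely-many-$v^0$ part is obtained by letting an amplitude parameter of the initial perturbation used to launch $\bar v$ (or of $R$) sweep a small continuum, each value producing a distinct $v^0$ in the $\varepsilon$-ball around $u^0$ and its own family of infinitely many admissible weak solutions.

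The chief difficulty is the compatibility in the subsolution of (a) the pointwise strict excess $\bar e>\frac{n}{2}\lambda_{\max}(v\otimes v-u)$, needed for convex integration to fire, and (b) the integral admissibility bound $\int\bar e(\cdot,t)\le\int|v^0|^2/2$ that must hold for every $t>0$. Because the kinetic energy of a smooth Euler flow is conserved, these two requirements are in direct tension if $v$ is taken to be an actual Euler solution; the slight dissipation built in via the time-dependent factor $\eta(t)<1$ provides exactly the wiggle room that reconciles them. That \eqref{weak energy ineq incomp} holds for \emph{every} $t\ge 0$ (rather than almost every $t$) then comes for free from the weak-$L^2$ continuity and the convexity of $v\mapsto\int|v|^2$, as remarked just after Definition \ref{def wk soln incomp}.
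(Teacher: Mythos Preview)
Your proposal has a genuine gap in the subsolution construction, precisely at the point you identify as ``the chief difficulty.'' Taking $v^0$ to be the subsolution's own initial value $v(\cdot,0)=\eta(0)\bar v(\cdot,0)$ forces an incompatibility near $t=0$ between the pointwise strict inequality and the integral admissibility bound that the damping factor $\eta$ cannot resolve. Indeed, with $u=v\otimes v-\tfrac{|v|^2}{n}I+R$ and $R$ traceless, the subsolution equation forces $\Div R+\nabla\tilde q=-\eta'\bar v+(\eta-\eta^2)\Div(\bar v\otimes\bar v)$; the term $-\eta'\bar v$ is divergence-free and not a gradient, so $|\lambda_{\min}R(\cdot,t)|$ is generically of order $|\eta'(t)|$. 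The strict inequality then requires $\bar e>\tfrac{|v|^2}{2}+\tfrac{n}{2}|\lambda_{\min}R|$, while the slack available from admissibility is only $(1-\eta(t)^2)\int|v^0|^2/2$. For any smooth $\eta$ with $\eta(0)=1$ and $\eta<1$ on $(0,T]$, one has $|\eta'(t)|/(1-\eta(t))\to\infty$ as $t\to0^+$, so the two constraints cannot both be met on a full neighborhood of $t=0$. The paper (following \cite{de2010admissibility,SW2012ARMA}) resolves this by \emph{double convex integration}: one first convex-integrates the strict subsolution on a short interval $[0,t_0]$, obtaining an Euler solution $\hat v$ with $\tfrac12\int|\hat v(\cdot,t)|^2=\IE$ for a.e.\ $t$, and then \emph{defines} $v^0:=\hat v(\cdot,\tilde t)$ for a generic $\tilde t\in(0,t_0)$. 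This $v^0$ already carries the full energy $\IE$, so admissibility for all later times is automatic; no damping trick is needed at $t=0$.

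A second, more structural, difference is that the paper produces a \emph{global} subsolution in one stroke, via the inviscid limit of Leray solutions to Navier--Stokes (Proposition~\ref{prop e-subsoln incomp}), and then makes the Reynolds stress strictly positive everywhere by a convex combination with the trivial subsolution $(0,\tfrac{2\IE}{n|\T^n|}\id)$ (Theorem~\ref{thm smoothing and conv combo}). Your iteration on windows $[kT,(k+1)T]$ is not properly justified: the phrase ``controlling $T$ uniformly in $k$ via compactness of the energy'' is incorrect, since $L^2$ bounds do not control the $C^1$ norms governing the local existence time of smooth Euler solutions, and in dimension $n=3$ these can blow up. One can try to make the iteration work by balancing the damping factor against the growth of $\|\bar v_k\|_{C^1}$, but this requires a quantitative argument you do not supply.
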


We want to remark that the above result is not new in the context of incompressible Euler equations. It  was first proved in \cite{SW2012ARMA} by Sz\'{e}kelyhidi--Wiedemann, and was later improved with the construction of $C^{1/5}$ solutions in Daneri--Runa--Sz\'{e}kelyhidi \cite{uptoCont}, and to $C^{\alpha}$ solutions up to the Onsager range $\alpha<1/3$ in Daneri--Sz\'{e}kelyhidi \cite{uptoOnsager}.
We will nevertheless give a proof of Theorem \ref{thm main incomp} which unifies the compressible and incompressible points of view in the context of the $L^\infty$ theory.

\subsection{Main ideas of the proof.} So far, all constructions of non-unique solutions   for compressible flows are done with the  $L^\infty$ theory of convex integration.  The general strategy follows  two  steps: the construction of  {\it subsolutions}, and the convex integration of these subsolutions to obtain actual solutions (see for instance  \cite{de2010admissibility}).  In their more general form, subsolutions are functions $(\rho, V=\rho u, R)$ solving the so-called ``Euler--Reynolds'' system
\begin{equation}\label{eqn subsoln}
\left\{
\begin{split}
\partial_t \rho + \Div V & = 0,\\
\partial_t V + \Div \left(\frac{V \otimes V}{\rho} + p(\rho)\id +  R\right) & = 0, 
\end{split}\right.
\end{equation}
where the compressible ``Reynolds stress tensor''  $R(t,x)$ is a positive semi-definite symmetric matrix for every $x,t$.  The family of subsolutions is stable under weak limit, or convex combination, therefore it is far easier to construct subsolutions than solutions (which corresponds to $R=0$). The convex integration provides a way to construct infinitely solutions to the Euler equations, from a subsolution, for a certain family of Reynolds stresses $R$. The more general the family of Reynolds stresses processable via the convex integration, the easier it is to construct subsolutions, and the larger is  the set of initial values which can be reached. To the best of the authors' knowledge, in the context of compressible fluids, the convex integration technique used so far allows to deal with only {\it diagonal} Reynold stresses (see \cite{Chiodaroli-continuous-density, chiodaroli2015global}). Such a method  is a variant from the incompressible case \cite{de2009euler}. It states that for every open set $P$, and every $\rho, q$ positive real-valued functions, $V$ vector-valued function, and $U$ traceless symmetric matrix-valued function  (all smooth enough), through convex integration there exist infinitely  many $\widetilde V$ and  traceless $\widetilde U$ (as oscillatory perturbations), both compactly supported  in $P$, such that in $\R^n\times \R_+$:
\begin{equation}  \label{eq0}
\left\{ \begin{split}
\Div   \widetilde V & = 0, \\
\partial_t   \widetilde V+ \Div \widetilde U & =0, 
\end{split} \right. 
\end{equation}
while in $P$ a nonlinear constraint 
\begin{equation} \label{eq1}
\frac{(V+\widetilde V)\otimes (V+\widetilde V)}{\rho}-(U+\widetilde U)=\left(\frac{|V|^2}{n\rho}+q\right) \id
\end{equation}
is achieved as to eliminate the Reynolds stress $R :=q\id $. Therefore under the assumption that there exists a smooth enough, energy-compatible  subsolution $(\rho, V, R)$ of \eqref{eqn subsoln} and denoting $U :=(V\otimes V-\id |V|^2/n)/\rho$, the oscillatory perturbations $(\widetilde V, \widetilde U)$ constructed from \eqref{eq0}--\eqref{eq1} readily generate $(\rho,V+\widetilde V)$ as solutions to the the isentropic Euler system.  However, the requirement on the Reynolds stress that $R = q \id$ is stringent and prevents one from generating a large class of initial values.
\vskip0.3cm
\subsubsection*{Convex integration with general Reynolds stresses} One of the main contributions of this paper is the generalization of the key convex integration tool to accommodate {\it any} positive definite Reynolds stresses (see Lemma \ref{lem conv int}) in the $L^\infty$ framework. Namely, we show that we can construct infinitely many solutions of \eqref{eq0}, replacing the contraint \eqref{eq1} with 
\begin{equation} \label{eq2}
\frac{(V+\widetilde V)\otimes (V+\widetilde V)}{\rho}-(U+\widetilde U)=\frac{|V|^2}{n\rho} \id+R,
\end{equation}
for any continuous strictly positive Reynolds stress $R > 0$. We state and prove this result in both $\R^n$ and $\T^n$ for future use. 
\vskip0.3cm
As in the previous work, this result is obtained by partitioning the domain $P$ in small areas where $\rho, V$ and $R$ are almost constant, and so considering the generation of highly oscillatory perturbations for the constant case first. Denote $ \mathcal{S}_0^n$ the set of traceless symmetric matrices in dimension $n$.  In the previous case when $R = q \id$, the generation of oscillations is based on the study of  the convex set:
\begin{equation}\label{eq3}
K_{d,r}^{co} :=\left\{(V,U)\in \R^n\times \mathcal{S}_0^n: e_d(V,U)\leq \frac{r^2}{2} \right\},
\end{equation}
where $e_d(V,U) := (n/2)\lambda_{\rm max}(V\otimes V-U)$, with $\lambda_{\rm max} (w)$ denoting the largest eigenvalue of the matrix $w$ (see \cite{de2010admissibility}). 
The oscillatory perturbations $(\widetilde V, \widetilde U)$ have to be constructed such that  for all time and space, $(V+\widetilde V, U+\widetilde U)$ stay in the set $K_{d,r}^{co}$ defined with 
$r^2=|V|^2+\trace R$.
The first observation is that oscillatory perturbations for a constant (but possibly non-diagonal) Reynolds stress $R=\mathring R+ \id (\trace R/n)$  can be constructed similarly as in \cite{de2010admissibility}, using the ``translation'' of $K_{d,r}^{co}$:
$$
K_r^{co} := (0,\mathring R)+K_{d,r}^{co}.
$$
This can be done in an admissible way as long as $\lambda_{\rm min}(R)$, the smallest eigenvalue of $R$, is positive. 
\vskip0.3cm
The difficulty is then to integrate this building block through the general convex integration scheme. In the classical situation, the problem \eqref{eq0}--\eqref {eq1}
is replaced by a relaxed one where \eqref{eq1} is replaced by a (matrix) inequality. That is, by the property that there exists a positive semidefinite matrix-valued function $S$ such that for all $(x,t)\in P$:
\begin{equation}\label{eq1bis}
\frac{(V+\widetilde V)\otimes (V+\widetilde V)}{\rho}-(U+\widetilde U)+S=\left(\frac{|V|^2}{n\rho}+q\right) \id. 
\end{equation}
The general convex integration procedure (see \cite{de2009euler}) ensures, via a topological 
Bair\'e category argument, the existence of infinitely many solutions to the relaxed problem \eqref{eq0} and \eqref{eq1bis} with the following property: each one of these solutions  cannot be reached  via a sequence of oscillatory  solutions to the same relaxed problem (namely, it is not possible to find a sequence of solutions to  \eqref{eq0} and \eqref {eq1bis} which converges weakly to this special solution, while not converging strongly). For incompressible flows \cite{de2009euler,de2010admissibility}, or ``piece-wise incompressible'' flows (compressible flows with a piece-wise constant in space and time-independent density) \cite{de2010admissibility,chiodaroli2015global}, or ``semi-stationary'' flows (time-independent density) \cite{Chiodaroli-continuous-density,aw2021sima}, constraint \eqref{eq1bis} can be designed in such a way that $S$ takes the form of a multiple of the identity matrix. This particularly allows one to derive, on those solutions, a ``saturation'' property of $S$ in the sense that $\lambda_{\rm min}(S)=0$. Taking advantage of the form that $S$ takes, this further concludes that $S \equiv 0$, meaning that those infinitely many functions  are actually solutions to \eqref{eq0}--\eqref{eq1}. 
\vskip0.3cm
On the other hand, the challenge in extending the framework of \cite{de2009euler} to \eqref{eq0} and \eqref{eq2} is apparent: when $q\id$ in \eqref{eq1bis} is replaced by a general positive matrix $R$, the corresponding $S$ is generically non-diagonal. A na\"ive adaptation of the convex integration as indicated above would still lead to a saturation in terms of $\lambda_{\rm min}(S)=0$. However this is never strong enough to imply the vanishing of $S$ any more. The resolution we propose here is to exploit the additional saturation in $\lambda_{\rm max}(S)$ in the course of the convex integration. In particular, when $\lambda_{\rm min}(S)>0$ on $P$, we will construct oscillatory perturbations with oscillation strength proportional to $\int_P \trace S(x,t)\,dx\,dt$. This way, we will verify that the solutions selected by the Bair\'e argument verify both $\lambda_{\rm min}(S) = 0$ on $P$ and $\int_P \trace S(x,t)\,dx\,dt=0$.
 The condition on  $\lambda_{\rm min}(S)$ indicates that all the eigenvalues of $S$ are nonnegative, and from the condition on the $\trace S$, their sum is 0 almost everywhere. This implies that $S=0$ on $P$ and so these subsolutions are actually solutions. 
 \vskip0.3cm
\subsubsection*{Density of wild initial data and double convex integration}
When considering the Cauchy problem \eqref{isen euler system}, the initial data that can lead to infinitely many admissible weak solutions are termed the ``wild'' initial data \cite{de2010admissibility}. In the context of incompressible flows, it has been shown that wild initial data are $L^2$-dense for $L^\infty$ weak solutions \cite{SW2012ARMA} as well as for H\"older $C^\alpha$ weak solutions \cite{uptoCont,uptoOnsager}. One of the subtleties in dealing with the Cauchy problem is that the subsolutions need to be adjusted to capture the full initial energy, and that the superimposed oscillatory perturbations need to preserve the initial datum. This is achieved by the so-called ``double convex integration'' first introduced in \cite{de2010admissibility} for $L^\infty$ solutions, and later extended to treat H\"older solutions \cite{daneri2014,uptoCont,uptoOnsager}. Specifically, a time-localized convex integration is first performed to construct a nontrivial subsolution with its wild initial datum, followed by a second convex integration to pass from this subsolution to infinitely many weak solutions. As is pointed out in \cite{uptoOnsager}, such a strategy is required in proving the density of the wild initial data.
\vskip0.3cm
One of the key ingredients of the above strategy is to find an appropriate class of perturbations in the scheme capable of generating sufficiently rich family of positive definite Reynolds stresses, from which a suitable notion of subsolutions can be introduced to track the relation between the size of the Reynolds stress and the loss of regularity. In the $C^\alpha$ theory, a fairly precise control of the H\"older norms at each iteration step is needed. In particular, the full strength of the Reynolds stresses is used in the estimates. Mikado flows are thus used to allow any positive definite Reynolds stresses throughout the iteration, since the Beltrami flows are not sufficient \cite{Choffrut}. In contrast, the notion of subsolutions in the $L^\infty$ framework is much less rigid and the solutions can be obtained implicitly via the Bair\'e argument. Only a portion of the size of the Reynolds stresses is needed in the estimate and hence Beltrami flows suffice the role of fast oscillating perturbations.
\vskip0.3cm
For compressible flows with a varying density, on the other hand, as explained in the earlier context of this subsection, the Reynolds stress $R$ in the $L^\infty$ scheme takes a general form while the oscillations need to have strength proportional to the size of $R$ measured through its trace as $\int_P \trace R(x,t)\,dx\,dt$. We want to emphasize that it is essential to allow a general class of $R$ in the convex integration in order to cover a large family of initial values. 
\vskip0.3cm
To ensure a full saturation of the initial energy for the subsolutions, we follow a similar version of the double convex integration on a small interval $[0,T]$ first, and then, on $[T, \infty)$. The weak solutions directly constructed by convex integration may not verify \eqref{weak energy ineq}. But for each fixed one, its time shifts, $u_s(t)=u(s+t)$ will verify it for almost every $s>0$. Taking $s$ small enough, we can show that this provides infinitely many initial value with (at least) one admissible solution on an interval $[0,T-s]$. Considering the same time shift on the functions obtained through convex integration on $[T,\infty)$ provides infinitely many continuation on $[T-s,\infty)$ combined with each admissible solutions first constructed on $[0,T-s]$. Note that by construction, the solutions are continuous in time at $T-s$ (weakly in $x$) and their value is exactly the value of the subsolution at this time. See Figure \ref{fig convint}. 
\vskip0.3cm
The above method works very effectively on incompressible flows. However additional care is needed in the compressible case. The total energy consists of both the kinetic and potential parts. Moreover, the Reynolds stress, which can be thought of as a result of commuting weak limits with nonlinearity of the Euler equations, also involves information about fluctuation in both velocity (or momentum) and density components. Since our convex integration is designed such that the ``defect energy'' of the subsolutions is injected into the kinetic energy, it is possible that there is a loss of the total energy resulting from the potential energy. Therefore before the first convex integration, some compensating potential energy should be pumped into the Euler--Reynolds system. Such an energy requirement imposes the constraint on the adiabatic exponent $\gamma \le 1 + \frac2n$ (see below for more detailed explanation).
\vskip0.3cm
\subsubsection*{Construction of energy-compatible subsolutions} Now that we are able to convex integrate with any smooth positive Reynold stresses, the construction of subsolutions is highly simplified. We choose to construct them from the weak inviscid limit of Navier--Stokes equations. For fixed viscosities $\nu$, the standard existence theory requires $\gamma>3/2$ (see Feireisl--Novotny--Petzeltova \cite{F32}). For this reason, we are  using instead a Navier--Stokes model with degenerate viscosities constructed in \cite{VY2016, LV2018,bresch2019global} which allows $\gamma>1$. We then modify the inviscid limit obtained from this model to ensure that the density $\rho$ and the Reynolds stress $R$ are smooth enough, and $\lambda_{\rm min}(R) > 0$ globally. Note that for compressible flows $R$ consists of two parts $\mathcal R$ and $r \id$ arising from the averaging effect on the velocity and on the density through the pressure, respectively. Such a weak inviscid limit (together with the smoothing process) results in an energy density
\begin{equation*}
\widetilde e := \frac12 \LC \frac{|V|^2}{\rho} + \trace \mathcal R \RC + \frac{p(\rho) + r}{\gamma - 1},
\end{equation*}
where both the kinetic and potential energies are changed. On the other hand, our convex integration produces subsolutions having energy density
\begin{equation*}
\bar e := \frac12 \LC \frac{|V|^2}{\rho} + \trace R \RC + \frac{p(\rho)}{\gamma - 1} = \frac12 \LC \frac{|V|^2}{\rho} + \trace \mathcal R + n r \RC + \frac{p(\rho)}{\gamma - 1},
\end{equation*}
from which one sees that the entire defect energy is injected into the kinetic energy through the convex integration (since the oscillations are imposed on velocity only). Clearly we need $\bar e \le \widetilde e$, which results in \eqref{restriction_gamma}.
From \eqref{weak energy ineq} we see that the energy compatibility requirement corresponds to asking $\int_{\T^n} \widetilde e \,dx \le \int_{\T^n} E(\rho^0,V^0) \,dx$. Therefore the construction of the energy-compatible subsolutions involves careful adjustments on $R$ through the regularization, positivity enhancement, and energy compatibility procedures. We are able to show that these adjustments can be done in a unified way using an abstract lemma about convex combination of subsolutions, cf. Lemma \ref{lem conv combo comp} (and Lemma \ref{thm ci incomp} for the incompressible case).
\vskip0.3cm
The rest of the paper is as follows. Section \ref{sec bb conv} is dedicated to the convex integration of \eqref{eq0} \eqref{eq2} in the case where $\rho, V, U$ and $R$ are constants.  The general case is treated in  Section \ref{sec ci}. Section \ref{sec incomp} is dedicated to the proof of Theorem \ref{thm main incomp} for the incompressible case, and Section \ref{sec comp} to the proof of Theorem \ref{thm main comp} for the compressible case.

\section{Building blocks for convex integration}\label{sec bb conv}

Recall from the Introduction that our focus is to consider solutions to \eqref{eqn subsoln} with $R > 0$ being positive definite in the interior region as the `subsolution' to the isentropic Euler system \eqref{isentropic euler}. 
Note that equation \eqref{eqn subsoln} is equivalent to 
\begin{equation}\label{isentropic equivalent}
\left\{
\begin{split}
\partial_t \rho + \Div V & = 0, \\
\partial_t V + \Div U + \nabla \left( p(\rho) + {|V|^2 \over n\rho}  \right) + \Div  R & = 0,
\end{split}\right.
\end{equation}
where $V := \rho v$ and $\displaystyle U := {V\otimes V \over \rho} - {|V|^2 \over n\rho} \id$.

Consider a $C^0$ solution $(\ro, \vo, \told)$ to \eqref{isentropic equivalent}. Denote the bounds for $\ro$ to be
\begin{equation}\label{bounds rho}
0 < {1\over \Lambda^2} \le \ro \le \Lambda^2.
\end{equation}
The goal is to construct infinitely many bounded solutions $(\widetilde V, \widetilde U)$ supported in a given domain $P$ satisfying
\begin{subequations}\label{ci system}
\begin{equation}  \label{ci eqn} 
\left\{ \begin{split}
\Div \widetilde V & = 0, \\
\partial_t \widetilde V + \Div \widetilde U & = 0, 
\end{split} \right. 
\end{equation}
with
\begin{equation} \label{ci constraint =}
{(\vo + \widetilde V) \otimes (\vo + \widetilde V) \over \ro} - (\uo + \widetilde U) =  {|\vo|^2 \over n\ro} \id + \told \quad \text{a.e. } P,
\end{equation}
where 
\begin{equation}\label{def U_0}
U_0 := {V_0\otimes V_0 \over \ro} - {|V_0|^2 \over n\ro} \id.
\end{equation}
\end{subequations}
The construction of $(\widetilde V, \widetilde U)$ will be addressed in the next section. As a building block, we will start with a simplified setting described below. 

\subsection{Constant states problem}
we will first consider a simplified problem of \eqref{ci system}, namely when $V_0$, $\ro$ and $R_0$ are constant vector, constant scalar and constant symmetric matrix respectively and satisfy $\ro > 0, \ R_0 > 0$. Therefore $U_0 \in \sym_0$ is also a constant matrix. Apparently such a $(\rho_0, V_0, R_0)$ solves \eqref{isentropic equivalent}.

Introducing $(V, U)(x,t) := \LC \tv/\sqrt{\rho_0}, \tu\RC(x, t\sqrt{\rho_0})$, then  $(V, U)$ satisfies
\begin{subequations}\label{ci scaled}
\begin{equation}\label{ci simplified}
\left\{ \begin{split}
\Div V & = 0, \\
\partial_t V + \Div U & = 0, 
\end{split} \right.
\end{equation}
with
\begin{equation}\label{simplified ci =}
(V_0 + V) \otimes (V_0 + V) - (U_0 + U) = {C_0 \over n} \id + R_0
\end{equation}
where $C_0 := {|\vo|^2 \over \ro} > 0$,  $\told > 0$ is positive definite, and $V_0$ is relabeled as $V_0/\sqrt{\rho_0}$.
\end{subequations}

Following \cite{de2010admissibility}, for $r\ge 0$ we define the states of speed $r$
\begin{equation*}
K_r := \LCB (V, U) \in \R^n \times \sym_0:\ U = V \otimes V - R_0 - {1\over n}\LC r^2 - \trace R_0 \RC \id, \ |V| = r \RCB.
\end{equation*}
Denote $K^{co}_r$ the convex hull in $\R^n \times \sym$ of $K_r$. Also define
\begin{equation*}
e(V, U) := {n\over 2} \eigenmax\LC V \otimes V - U - R_0 \RC
\end{equation*}
where $\eigenmax$ denotes the largest eigenvalue. Then similar to \cite[Lemma 3]{de2010admissibility}, we have the following
%
\begin{lemma}\label{lem convex hull}
For $(V, U) \in \R^n \times \sym_0$ it holds that
\begin{enumerate}[label={\upshape(\roman*)}]
\item $e: \R^n \times \sym_0 \to \R$ is convex;
\item $\displaystyle {1\over 2}\LC |V|^2 - \emph{tr} R_0 \RC \le e(V, U)$, with equality if and only if 
\[
U = V \otimes V - R_0 - {1\over n}\LC |V|^2 - \emph{tr} R_0 \RC \id;
\]
\item denote $|U|_\infty$ the operator norm of $U$, then
\[
|U|_\infty \le {2(n-1) \over n} e(V,U) + (n-1) |R_0|_\infty;
\]
\item the convex hull of $K_r$ is
\begin{equation*}
K^{co}_r = \LCB (V, U) \in \R^n \times \sym_0 : \ e(V, U) \le {1\over 2} (r^2 - \emph{tr} R_0) \RCB;
\end{equation*}
\item for $(v, u) \in \R^n \times \sym_0$, $\sqrt{2\LB e(v, u) + \emph{tr} R_0 \RB}$ gives the smallest $s$ for which $(v, u) \in K^{co}_s$.
\end{enumerate}
\end{lemma}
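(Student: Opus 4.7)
The plan is to prove the five parts in order, each resting on standard spectral-theoretic facts about symmetric matrices.

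For \textbf{(i)}, I would invoke the variational characterization
\[ \eigenmax(V\otimes V - U - R_0) \;=\; \sup_{|\xi|=1}\bigl[(V\cdot\xi)^2 - \langle U\xi,\xi\rangle - \langle R_0\xi,\xi\rangle\bigr]. \]
Inside the supremum, $(V\cdot\xi)^2$ is convex in $V$ and $-\langle U\xi,\xi\rangle$ is affine in $U$, so each slice is jointly convex in $(V,U)$. A supremum of convex functions is convex, giving (i). For \textbf{(ii)}, set $M := V\otimes V - U - R_0$. Then $\trace M = |V|^2 - \trace R_0$ (using $\trace U = 0$), and $\eigenmax(M) \geq \tfrac{1}{n}\trace M$ with equality iff all eigenvalues of $M$ coincide, i.e.\ $M = \tfrac{1}{n}\trace M \cdot \id$. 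Rearranging yields both the inequality and the equality case. For \textbf{(iii)}, observe that any traceless symmetric matrix $U$ with eigenvalues $\mu_1 \geq \cdots \geq \mu_n$ satisfies $\mu_1 = -\sum_{i\geq 2}\mu_i \leq -(n-1)\mu_n$, and symmetrically $-\mu_n \leq (n-1)\mu_1$, so $|U|_\infty \leq (n-1)|\eigenmin(U)|$. Writing $-U = M - V\otimes V + R_0$ and applying Weyl's subadditivity,
\[ -\eigenmin(U) \;=\; \eigenmax(-U) \;\leq\; \eigenmax(M) + \eigenmax(-V\otimes V) + \eigenmax(R_0). \]
Since $V\otimes V$ is rank-one positive semidefinite (and $n\ge 2$), $\eigenmax(-V\otimes V) = 0$; together with $\eigenmax(R_0)\le |R_0|_\infty$ and $\eigenmax(M) = \tfrac{2}{n}e(V,U)$, this gives $|\eigenmin(U)| \le \tfrac{2}{n}e(V,U) + |R_0|_\infty$, and multiplying by $n-1$ delivers (iii).

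Part \textbf{(iv)} is the main substantive step. The easy inclusion $K_r^{co} \subseteq \{e(V,U) \leq \tfrac{1}{2}(r^2 - \trace R_0)\}$ is immediate: every point of $K_r$ saturates the inequality of (ii) while also having $|V|=r$, and the sublevel set is convex by (i). For the reverse inclusion I would use the translation trick previewed in the introduction: decompose $R_0 = \mathring R_0 + \tfrac{\trace R_0}{n}\id$ with $\mathring R_0 \in \sym_0$, and set $U' := U + \mathring R_0 \in \sym_0$. Then
\[ V\otimes V - U - R_0 \;=\; V\otimes V - U' - \tfrac{\trace R_0}{n}\id, \]
so $e(V,U) = e_0(V,U') - \tfrac{1}{2}\trace R_0$, where $e_0$ is the same functional with $R_0=0$. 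The hypothesis $e(V,U) \leq \tfrac{1}{2}(r^2 - \trace R_0)$ therefore becomes $e_0(V,U') \leq \tfrac{r^2}{2}$, and a direct computation shows $(V,U)\in K_r$ is equivalent to $(V,U') \in K_r^0 := \{(W,N) : N = W\otimes W - \tfrac{r^2}{n}\id,\ |W|=r\}$, the classical (untranslated) set. The classical Lemma~3 of \cite{de2010admissibility} then furnishes a finite convex decomposition $(V,U') = \sum_i \alpha_i(V_i,U_i')$ with $(V_i,U_i') \in K_r^0$; pulling back by $U_i := U_i' - \mathring R_0$ yields the required decomposition of $(V,U)$ into elements of $K_r$.

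Part \textbf{(v)} is then an immediate corollary: (iv) says $(v,u) \in K_s^{co}$ exactly when $s^2 \geq 2e(v,u) + \trace R_0$, and the smallest admissible $s$ is obtained by taking equality. The main obstacle in this lemma is the nontrivial inclusion in (iv), but the translation $U \mapsto U + \mathring R_0$ transports the problem cleanly into the already-known classical setting, so no fresh combinatorial decomposition has to be constructed here; the work is entirely inherited from \cite{de2010admissibility}.
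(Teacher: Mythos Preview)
Your argument is correct throughout. Parts (i)--(iii) and (v) are handled essentially as the paper does (the paper simply cites \cite{de2010admissibility} for (i), (ii), (v), and your direct arguments are the standard ones; your Weyl-inequality version of (iii) is a harmless repackaging of the paper's eigenvector computation).

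The genuine difference is in (iv). The paper repeats the extreme-point argument of \cite[Lemma~3]{de2010admissibility} in situ: it shows the sublevel set $S_r$ is compact (using (ii) and (iii)), and then constructs an explicit one-parameter perturbation showing that any $(V,U)\in S_r\setminus K_r$ is not an extreme point of $S_r$, so all extreme points lie in $K_r$ and hence $S_r\subset K_r^{co}$. You instead use the affine translation $U\mapsto U'=U+\mathring R_0$ to transport both $K_r$ and the sublevel set onto their classical $R_0=0$ counterparts, and then invoke the classical result directly. Your route is shorter and is precisely the ``translation'' $K_r^{co}=(0,\mathring R_0)+K_{d,r}^{co}$ that the paper already advertises in the introduction; it buys you the result without rerunning the extreme-point analysis. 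The paper's route is more self-contained (no black-box appeal to the earlier lemma) and makes the compactness of $K_r^{co}$ explicit via (ii)--(iii), which is used elsewhere. Either is perfectly acceptable here.
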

%
\begin{proof}
The proofs of (i), (ii) and (v) follows almost identically as in \cite[Lemma 3]{de2010admissibility}. So let's focus only on (iii) and (iv).

(iii) Let $\xi_0$ be a unit eigenvector of $U$ associated to its smallest eigenvalue $\eigenmin(U)$.  We have by definition that
\begin{align*}
e(V, U) & \ge {n\over 2} \max_{\xi \in \mathbb S^{n-1}} \big( - \LA \xi, (U + R_0) \xi \RA \big) \\
& \ge {n\over 2} \big( - \LA \xi_0, U \xi_0 \RA \big) - {n\over 2}  \LA \xi_0, R_0 \xi_0 \RA  \\
& \ge -{n\over 2} \eigenmin(U) - {n \over 2} \max_{\xi \in \mathbb S^{n-1}} \big( \LA \xi, R_0 \xi \RA \big) \\
& \ge -{n\over 2} \eigenmin(U) - {n \over 2} \eigenmax(R_0).
\end{align*}
Thus since $U$ is trace free, we have
\[
|U|_\infty \le (n-1) \big( -\eigenmin(U) \big) \le {2(n-1) \over n} e(V,U) + (n-1) |R_0|_\infty.
\]

(iv) Denote
\[
S_r := \LCB (V, U) \in \R^n \times \sym_0 : \ e(V, U) \le {1\over 2} (r^2 - \trace R_0) \RCB.
\]
From definition we see that whenever $(V, U) \in K_r$ we have $e(V, U) = {1\over 2} (r^2 - \trace R_0)$. Since $e$ is convex from (i), it follows that
\[
K^{co}_r \subset S_r.
\]
From (ii) and (iii) we know that $S_r$ is compact. hence $S_r$ equals the closed convex hull of its extreme points. 

From $(V, U) \in S_r \backslash K_r$, we can without loss of generality assume that $V\otimes V - U - R_0$ is diagonal with diagonal entries $\lambda_1 \ge \ldots \ge \lambda_n$ satisfying $\lambda_1 \le {1\over n} (r^2 - \trace R_0)$. From (ii) and the fact that $(V, U) \not\in K_r$ we conclude that $\lambda_1 < {1\over n} (r^2 - \trace R_0)$.

Now we can continuously perturb such $(V, U)$ in $\R^n \times \sym_0$: write $V = \sum_i V^i e_i$ where $e_1, \ldots, e_n$ are the basis vectors. Pick a fixed pair $(v, u) \in \R^n \times \sym_0$ as
\[
v = e_n, \quad u = \sum^{n-1}_{i=1} V^i (e_i \otimes e_n + e_n \otimes e_i).
\]
This way
\[
(V + tv) \otimes (V + tv) - (U + t u) = (V \otimes V - U) + (2t V^n + t^2) e_n \otimes e_n,
\]
and therefore for $|t|$ sufficiently small $e(V+tv, U + tu) \le {1\over 2} (r^2 - \trace R_0)$. Hence $(V+tv, U + tu) \in S_r$, and thus $(V, U)$ is not an extreme point of $S_r$. So all of the extreme points of $S_r$ are contained in $K_r$. 
\end{proof}

\subsection{Oscillations}
The construction of the needed oscillations in the interior of $K^{co}_r$ is done via seeking suitable plane-wave solutions. They correspond to the following admissible segments; see \cite[Definition 6]{de2010admissibility}.
\begin{definition}\label{def adm seg}
Given $r>0$, we call a line segment $\sigma \subset \R^n \times \sym_0$ an {\it admissible segment} if it satisfies
\begin{enumerate}[label=\rm(\alph*)]
\item $\sigma \subset \text{int }K^{co}_r$,
\item $\sigma$ is parallel to $(a, a\otimes a) - (b, b\otimes b)$ for some $a, b \in \R^n$ with $|a| = |b| = r$ and $b \ne \pm a$.
\end{enumerate}
\end{definition}

Similar to \cite[Lemma 4.3]{de2009euler} and \cite[Lemma 6]{de2010admissibility}, we can first record the following geometric property of $K^{co}_r$ which provides the existence of sufficient large admissible segments.
%
\begin{lemma}[Existence of large admissible segements]\label{lem geometric}
Set $N_0 := \text{dim} (\R^n \times \sym_0) = \tfrac{n(n+3)}{2} - 1$. 
For any $r>0$ and for any $(V, U) \in \emph{int }K^{co}_r$ there exists an admissible line segment 
\begin{equation}\label{def line}
\sigma := \Big[ (V, U) - (v, u), (V,U) + (v,u) \Big]
\end{equation}
such that
\begin{equation*}
|v| \ge {1 \over 4N_0 r} \left( r^2 - |V|^2 \right) \quad \text{ and } \quad \emph{dist}(\sigma, \partial K^{co}_r) \ge {1\over2} \emph{dist}((V, U), \partial K^{co}_r).
\end{equation*}
\end{lemma}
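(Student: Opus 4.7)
My plan is to combine Carath\'eodory's theorem with a variance identity and a mass-transfer argument, in the spirit of \cite[Lemma~4.3]{de2009euler}. The starting point is the decomposition of $(V, U)$ into extreme points of $K^{co}_r$: by (the proof of) part (iv) of Lemma \ref{lem convex hull}, every extreme point of $K^{co}_r$ lies in $K_r$, so Carath\'eodory's theorem applied in the $N_0$-dimensional ambient space $\R^n \times \sym_0$ produces coefficients $\lambda_i \ge 0$ with $\sum_i \lambda_i = 1$ and vectors $a_i \in \R^n$ with $|a_i| = r$ such that
\begin{equation*}
(V, U) = \sum_{i=1}^{N_0 + 1} \lambda_i \Phi(a_i), \qquad \Phi(a) := \LC a,\ a \otimes a - R_0 - \tfrac{1}{n}(r^2 - \trace R_0)\, \id \RC.
\end{equation*}

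Next I would extract a large admissible direction via a variance identity. Since $\sum_i \lambda_i a_i = V$ and $|a_i| = r$, a direct expansion yields $r^2 - |V|^2 = \sum_{i<j} \lambda_i \lambda_j |a_i - a_j|^2$. Pigeonholing over the $\binom{N_0+1}{2}$ pairs supplies indices $(i^*, j^*)$ with $\lambda_{i^*}\lambda_{j^*}|a_{i^*} - a_{j^*}|^2 \gtrsim (r^2 - |V|^2)/N_0^2$, and combining with $\max(\lambda_{i^*}, \lambda_{j^*}) \le 1$ and $|a_{i^*} - a_{j^*}| \le 2r$ gives
\begin{equation*}
\alpha\,|a_{j^*} - a_{i^*}| \gtrsim \frac{r^2 - |V|^2}{r}, \qquad \alpha := \min(\lambda_{i^*}, \lambda_{j^*}),
\end{equation*}
up to a dimensional constant. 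Provided $a_{i^*} \ne \pm a_{j^*}$, the vector $\xi := \Phi(a_{j^*}) - \Phi(a_{i^*}) = (a_{j^*} - a_{i^*},\ a_{j^*}\otimes a_{j^*} - a_{i^*}\otimes a_{i^*})$ is admissible in the sense of Definition \ref{def adm seg}, and the standard mass-transfer trick---moving mass $\alpha$ between indices $i^*$ and $j^*$ in either direction keeps the Carath\'eodory coefficients nonnegative---yields $(V, U) \pm \alpha\,\xi \in K^{co}_r$. Hence the symmetric segment of half-length $\alpha|\xi|$ centered at $(V, U)$ already lies in $K^{co}_r$.

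To promote ``inside $K^{co}_r$'' to ``distance at least $d/2 := \tfrac12\,\mathrm{dist}((V, U), \partial K^{co}_r)$ from $\partial K^{co}_r$'', I would exploit concavity of the distance function to $\partial K^{co}_r$ on the convex body $K^{co}_r$. Parametrize the line $t \mapsto (V, U) + t\xi$ and let $-T_-,\,T_+ > 0$ be the first values of $t$ at which it meets $\partial K^{co}_r$; the previous step gives $\min(T_+, T_-) \ge \alpha$. Along this line the distance function is concave, equals $d$ at $t = 0$, and vanishes at $t = T_\pm$, so it exceeds $d/2$ throughout $[-T_-/2,\,T_+/2]$. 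Setting $(v, u) := \tfrac12\,\min(T_+, T_-)\,\xi$ therefore yields a symmetric admissible segment centered at $(V, U)$ at distance $\ge d/2$ from the boundary, with $|v| \ge \tfrac12\,\alpha\,|a_{j^*} - a_{i^*}|$, which delivers the claimed lower bound $|v| \ge (r^2 - |V|^2)/(4 N_0\,r)$ after careful bookkeeping of dimensional constants (as in \cite[Lemma~4.3]{de2009euler}).

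The main obstacle I anticipate is the degenerate case $a_{i^*} = -a_{j^*}$, which is explicitly excluded from Definition~\ref{def adm seg} and would make $\xi = (2a_{j^*}, 0)$ fail admissibility. I would resolve it as follows: if every pigeonhole-large pair in the decomposition is antipodal, then all $a_i$ take only the two values $\pm a$ with $|a| = r$, forcing $U = a\otimes a - R_0 - \tfrac{1}{n}(r^2 - \trace R_0)\id$ and $V = c\,a$ for some $c \in (-1, 1)$; a direct evaluation of $e$ using Lemma~\ref{lem convex hull}(ii) then gives $e(V, U) = (r^2 - \trace R_0)/2$, placing $(V, U)$ on $\partial K^{co}_r$ and contradicting the strict interiority hypothesis. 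Consequently a non-antipodal large pair always exists---after, if necessary, a small perturbation of the decomposition allowed by the slack $d > 0$---and the argument goes through.
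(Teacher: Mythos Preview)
Your argument is correct and is precisely the proof of \cite[Lemma~4.3]{de2009euler}, rewritten with the parametrization $\Phi$ incorporating $R_0$. The paper instead observes in one line that $K^{co}_r - (0,\,(1/n)(\trace R_0)\id - R_0)$ coincides with the standard set $K^{co}_{d,r}$ of \cite{de2009euler}; since this translation touches only the $U$-component it preserves $|V|$, the admissible directions $(a,a\otimes a)-(b,b\otimes b)$, and distances to the boundary, so \cite[Lemma~4.3]{de2009euler} applies verbatim---which is exactly the Carath\'eodory/variance/mass-transfer argument you have reproduced in place.
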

%
The proof of this lemma follows directly  from \cite[Lemma 4.3]{de2009euler}  applied on the translated set $K^{co}_r-(0,(1/n)(\trace R_0)\id -R_0)$.\vskip0.1cm

We now recall  \cite[Proposition 4.1]{chiodaroli2015global} which provides  the existence of localized plane waves oscillating between two states of \eqref{ci simplified} with equal speed.
%
\begin{lemma}[Localized plane waves]\label{lem plane waves}
Let $a, b \in \R^n$ such that $a \ne \pm b$ and $|a| = |b|$. For a $\lambda > 0$ consider a segment $\sigma = [-p, p] \subset \R^n \times \sym_0$ where $p = \lambda \LB (a, a\otimes a) - (b, b\otimes b) \RB$. Then there exists a pair $(v,u) \in C^\infty_c(B_1(0) \times (-1,1))$ solving
\begin{equation}\label{osci eqn}
\left\{ \begin{split}
\textup{div}_x v & = 0, \\
\partial_t v + \textup{div}_x u & = 0, 
\end{split} \right.
\end{equation}
and such that
\begin{enumerate}[label={\upshape(\roman*)}]
\item the image of $(v, u)$ is contained in an $\epsilon$-neighborhood of $\sigma$ and $\int (v, u) \,dxdt = 0$;
\item $\int |v(t,x)|\,dxdt \ge \alpha \lambda |b - a|$ where $\alpha > 0$ is a geometric constant.
\end{enumerate}
\end{lemma}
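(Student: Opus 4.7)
The plan is to follow the standard high-frequency plane-wave construction from \cite{de2009euler, chiodaroli2015global}. Since $|a|=|b|$ and $a\ne\pm b$, setting $c := (a+b)/2$ and $d := (a-b)/2$ yields $c\cdot d = 0$, $c\ne 0$, and $d\ne 0$. A direct computation gives
\[
(a, a\otimes a) - (b, b\otimes b) = 2\bigl(d,\ c\otimes d + d\otimes c\bigr),
\]
so $p = 2\lambda\bigl(d,\ c\otimes d + d\otimes c\bigr)$. The first step is to exhibit an unlocalized plane-wave solution of \eqref{osci eqn} along the direction $p$. Taking $\xi := c$ and $\eta := |c|^2$, the ansatz
\[
(v_\sharp, u_\sharp)(x,t) := p\,\cos\bigl(N(\xi\cdot x - \eta t)\bigr),\qquad N\in\N,
\]
is divergence-free in $x$ (since $\xi\cdot d = 0$) and satisfies $\partial_t v_\sharp + \Div u_\sharp = 0$ because $(c\otimes d + d\otimes c)\,c = |c|^2 d = \eta d$. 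The image of $u_\sharp$ lies in $\sym_0$ since $\trace(c\otimes d + d\otimes c) = 2 c\cdot d = 0$, and $\eta > 0$ ensures genuine time-oscillation which will later permit time-localization.

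The main obstacle is localization: multiplying $(v_\sharp, u_\sharp)$ by a cutoff $\chi\in C^\infty_c(B_1(0)\times(-1,1))$ destroys the constraints \eqref{osci eqn}. I would resolve this using the potential representation standard in this context: construct a constant-coefficient linear differential operator $\mathcal L$ of some fixed order $k$, acting on scalar potentials $\Phi$, whose range lies inside the kernel of \eqref{osci eqn} and which reproduces the plane wave above from a primitive $\Phi_N$ of amplitude $O(N^{-k})$. An explicit such $\mathcal L$ can be written via a Helmholtz-type decomposition enforcing $\Div v = 0$ together with the momentum balance; alternatively, the construction of \cite[Proposition 4.1]{chiodaroli2015global} applies directly, since the symbol structure of \eqref{osci eqn} coincides with the one treated there. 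Applying $\mathcal L$ to $\chi\,\Phi_N$ yields a pair $(v, u)\in C^\infty_c(B_1(0)\times(-1,1))$ solving \eqref{osci eqn} whose leading-order part equals $\chi(x,t)\,p\,\cos(N(\xi\cdot x - \eta t))$, with commutator errors of order $O(N^{-1})$ in $C^0$.

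Given the localized plane wave, both conclusions follow immediately. The mean-zero property in (i) holds because $(v,u) = \mathcal L[\chi\Phi_N]$ is a derivative of a compactly supported function; the image of $(v,u)$ lies in an arbitrary $\epsilon$-neighborhood of $\sigma = [-p, p]$ provided we take $\|\chi\|_\infty \le 1$ and $N$ large enough, since the leading part $\chi\,p\,\cos(\cdot)$ has image in $\sigma$ and the commutator is $O(N^{-1})$. For (ii), the leading-order asymptotics give
\[
\int |v|\,dx\,dt = 2\lambda|d|\int |\chi(x,t)|\,\bigl|\cos\bigl(N(\xi\cdot x - \eta t)\bigr)\bigr|\,dx\,dt + O(N^{-1}),
\]
and as $N\to\infty$ the oscillatory integral converges to $\tfrac{2}{\pi}\int |\chi|\,dx\,dt > 0$ by the weak-$\ast$ mean of $|\cos|$. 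Since $|d| = |a-b|/2$, this yields the claimed bound with a geometric constant $\alpha$ depending only on the cutoff $\chi$ and on $n$.
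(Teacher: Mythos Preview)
The paper does not supply its own proof of this lemma; it merely recalls the statement from \cite[Proposition~4.1]{chiodaroli2015global}. Your sketch is a faithful reconstruction of the argument behind that proposition (which in turn goes back to \cite{de2009euler}): the orthogonal decomposition $c=(a+b)/2$, $d=(a-b)/2$, the plane-wave ansatz in the direction $(\xi,\eta)=(c,|c|^2)$, and localization via a potential operator $\mathcal L$ are exactly the standard ingredients, and your verification of (i) and (ii) is correct.

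One comment: the only place where your write-up is genuinely sketchy is the existence of the potential operator $\mathcal L$. You assert that ``an explicit such $\mathcal L$ can be written via a Helmholtz-type decomposition,'' but you do not produce it. This is the heart of the matter, since localizing while preserving \emph{both} constraints in \eqref{osci eqn} simultaneously is precisely the nontrivial step. Of course, you immediately add that \cite[Proposition~4.1]{chiodaroli2015global} applies verbatim, which is true and is exactly what the paper itself does; so there is no actual gap, only a deferral to the same external source the paper defers to.
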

%

\subsection{Perturbation property}
In this subsection we will derive a key property  which will be used in Section \ref{sec ci}.

Let $C_0 \geq 0$ be a constant and $R_0 > 0$ be a symmetric positive definite matrix. Define a subset of $\R^n \times \sym_0$
\begin{equation*}
\Uset := \LCB (v,u)\in \R^n \times \sym_0:\ v\otimes v - u - R_0 < {C_0 \over n} \id \RCB.
\end{equation*}
Also define a function space
\begin{equation}\label{X_0 const}
\begin{split}
X_0^c := & \Big\{ (V, U) \in C^\infty_c(P; \R^n \times \sym_0): \ (V, U) \text{ solves } \eqref{ci simplified} \text{ and }  \\
&\qquad \qquad \left. (V_0 + V) \otimes (V_0 + V) - (U_0 + U) < {C_0 \over n} \id + R_0 \RCB.
\end{split} 
\end{equation}

Recasting Lemma \ref{lem geometric} on $\Uset$ we have
%
\begin{lemma}[Geometric property of $\Uset$]\label{lem geometric U}
There exists a positive geometric constant $c_0$ such that for any $(\tilde v, \tilde u) \in \Uset$, there exists a segment $\sigma$ as in Lemma \ref{lem plane waves} with $|a| = |b| = \sqrt{C_0 + \emph{\trace} R_0}$, 
\begin{equation*}
(\tilde v, \tilde u) + \sigma \in \Uset, \quad \text{and} \quad \lambda |b - a| \ge c_0\LC C_0 + \emph{\trace} R_0 - |\tilde v|^2 \RC.
\end{equation*} 
\end{lemma}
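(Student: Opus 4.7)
The plan is to recognize $\Uset$ as the interior of the convex hull $K^{co}_r$ from Lemma \ref{lem convex hull} for an appropriate choice of $r$, and then to translate the admissible segment produced by Lemma \ref{lem geometric} directly into the ``plane wave'' parametrization used in Lemma \ref{lem plane waves}. The proof is essentially a bookkeeping exercise between these two equivalent formalisms; no new geometric estimates are needed.

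First I would set $r := \sqrt{C_0 + \trace R_0}$ and verify the identification $\Uset = \text{int}\,K^{co}_r$. The matrix inequality $v\otimes v - u - R_0 < \frac{C_0}{n}\id$ is equivalent to $\eigenmax(v\otimes v - u - R_0) < C_0/n$, i.e.\ to $e(v,u) < C_0/2 = \tfrac{1}{2}(r^2 - \trace R_0)$, which by Lemma \ref{lem convex hull}(iv) is precisely the strict membership condition for $K^{co}_r$. In particular, Lemma \ref{lem convex hull}(ii) then guarantees $|\tilde v| < r$ for every $(\tilde v, \tilde u) \in \Uset$, so that the quantity $r^2 - |\tilde v|^2 = C_0 + \trace R_0 - |\tilde v|^2$ is strictly positive.

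Next, given any $(\tilde v, \tilde u)\in \Uset$, I would apply Lemma \ref{lem geometric} at this point to produce an admissible segment $\tilde\sigma=[(\tilde v,\tilde u)-(v,u),\,(\tilde v,\tilde u)+(v,u)]$ lying in $\text{int}\,K^{co}_r$. By Definition \ref{def adm seg}, its direction $(v,u)$ is parallel to $(a,a\otimes a)-(b,b\otimes b)$ for some $a,b\in\R^n$ with $|a|=|b|=r$ and $b\ne \pm a$, so one may write $(v,u)=\lambda[(a,a\otimes a)-(b,b\otimes b)]$ for a unique $\lambda>0$. Setting $p := \lambda[(a,a\otimes a)-(b,b\otimes b)]$ and $\sigma := [-p,p]$, we have $(\tilde v,\tilde u)+\sigma=\tilde\sigma \subset \text{int}\,K^{co}_r=\Uset$, which proves the inclusion claim and places $\sigma$ in exactly the form assumed in Lemma \ref{lem plane waves} with $|a|=|b|=\sqrt{C_0+\trace R_0}$.

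Finally, the length bound in Lemma \ref{lem geometric} reads $|v| \ge \tfrac{1}{4N_0 r}(r^2 - |\tilde v|^2)$, and since the first coordinate of $p$ is $\lambda(a-b)$ one has $|v|=\lambda|a-b|$. Therefore
\[
\lambda|b-a| \ge \frac{1}{4N_0\sqrt{C_0+\trace R_0}}\,\bigl(C_0+\trace R_0 - |\tilde v|^2\bigr),
\]
which is the desired inequality with $c_0$ of the claimed form. The only delicate point is the dependence of $c_0$ on $r=\sqrt{C_0+\trace R_0}$; in the applications of Section \ref{sec ci} the subsolution data keep $r$ inside a fixed compact range, so this factor can be absorbed into a single constant depending only on the dimension and on the $L^\infty$ bounds of $(\ro, V_0, R_0)$. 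I expect no genuine obstacle in the proof: everything reduces to correctly transcribing Lemma \ref{lem geometric} through the translation $(0,R_0-\tfrac{1}{n}(\trace R_0)\id)$ implicit in the definition of $e$.
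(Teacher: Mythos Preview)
Your proof is correct and follows essentially the same approach as the paper: identify $\Uset = \text{int}\,K^{co}_r$ with $r^2 = C_0 + \trace R_0$ via Lemma \ref{lem convex hull}(iv), then invoke Lemma \ref{lem geometric} and read off $|v|=\lambda|a-b|$. You are in fact more careful than the paper in flagging that the resulting constant carries a factor $1/(4N_0 r)$; the paper simply calls $c_0$ ``geometric,'' and in the downstream use (Proposition \ref{prop L^1 coercive} and Lemma \ref{lem beta ineq}) this $r$-dependence is indeed harmless since $r$ is determined by the fixed data $(\ro,V_0,R_0)$.
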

%
\begin{proof}
From Lemma \ref{lem convex hull} we see that 
\[
\Uset = \text{int }K^{co}_r, \quad \text{where }\quad r^2 = C_0 + \trace R_0.
\]
The existence of the claimed segment $\sigma$ is a direct consequence of Lemma \ref{lem geometric}. Moreover since the length of $\sigma$ is (up to a geometric constant, say, $c_0$) comparable to $\lambda |b - a|$, the conclusion of the lemma holds.
\end{proof}

Now we can conclude this section with the following $L^1$-coercivity result.
%
\begin{proposition}[$L^1$-coercivity of the perturbation]\label{prop L^1 coercive}
There exists a  constant $c_1 > 0$ such that the following is true.
Let $(V,U) \in X_0^c$ where $X_0^c$ is defined in \eqref{X_0 const}.  Then, for any  open set $\Gamma \subset P$, there exists a sequence $\{(V_i, U_i)\} \subset X_0^c$ converging weak-$\ast$ to $(V, U)$ such that 
\begin{equation}\label{L^1 coercive constant}
\LN V_i - V \RN_{L^1(\Gamma)} \ge c_1 \LB \LC C_0 + \emph{tr} R_0 \RC |\Gamma| - \LN V_0 + V \RN^2_{L^2(\Gamma)} \RB.
\end{equation}
\end{proposition}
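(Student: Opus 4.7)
The plan is to build $(V_i,U_i):=(V,U)+\sum_k(w_k^i,z_k^i)$ as the original field plus a sum of localized plane-wave perturbations, one in each cell of a partition of $\Gamma$ whose mesh size $\delta_i$ shrinks to zero. The building block in each cell comes from Lemma~\ref{lem plane waves} applied to an admissible segment produced by Lemma~\ref{lem geometric U} at the frozen center value. Since each plane wave is compactly supported in its cell with zero mean, the total perturbation oscillates finely enough to converge weak-$*$ to zero, while its $L^1$ norm is bounded below by an expression comparable to the length of the segment, whose summation across cells yields a Riemann sum for the targeted integral.

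More concretely, for each $i$ tile $\Gamma$ (up to a null boundary layer) by disjoint cubes $\{Q_k^i\}$ of side $\delta_i\to 0$ centered at $(x_k,t_k)$. Set $(\tilde v_k,\tilde u_k):=(V_0+V(x_k,t_k),U_0+U(x_k,t_k))\in\mathcal{U}$ and apply Lemma~\ref{lem geometric U} to obtain a segment $[-p_k,p_k]$ with $p_k=\lambda_k[(a_k,a_k\otimes a_k)-(b_k,b_k\otimes b_k)]$, $|a_k|=|b_k|=\sqrt{C_0+\trace R_0}$, and
\[
\lambda_k|b_k-a_k|\ge c_0\bigl(C_0+\trace R_0-|\tilde v_k|^2\bigr),
\]
with $(\tilde v_k,\tilde u_k)+[-p_k,p_k]\subset\mathcal{U}$. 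Feed $[-p_k,p_k]$ and a small $\epsilon_k>0$ into Lemma~\ref{lem plane waves} to produce $(w_k,z_k)\in C_c^\infty(B_1(0)\times(-1,1))$ solving \eqref{osci eqn} with zero mean, image in the $\epsilon_k$-neighborhood of $[-p_k,p_k]$, and $\int|w_k|\ge\alpha\lambda_k|b_k-a_k|$. The system \eqref{ci simplified} is invariant under the rescaling $(x,t)\mapsto(\delta_i^{-1}(x-x_k),\delta_i^{-1}(t-t_k))$ without any change of amplitude, so the rescaled pair $(w_k^i,z_k^i)$ is supported in $Q_k^i$, still solves the system, retains the $\epsilon_k$-closeness, and satisfies $\int_{Q_k^i}|w_k^i|\,dx\,dt\gtrsim\alpha\lambda_k|b_k-a_k|\,|Q_k^i|$. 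Summing over $k$ gives
\[
\|V_i-V\|_{L^1(\Gamma)}\ge\alpha c_0\sum_k\bigl(C_0+\trace R_0-|\tilde v_k|^2\bigr)|Q_k^i|,
\]
a Riemann sum which converges as $\delta_i\to 0$ to $\alpha c_0\int_\Gamma(C_0+\trace R_0-|V_0+V|^2)\,dx\,dt$, yielding the claim with $c_1=\alpha c_0/2$ for $i$ large. Weak-$*$ convergence of both $V_i\to V$ and $U_i\to U$ follows from the zero-mean property: for a test function $\phi$, a modulus-of-continuity argument inside each cube produces a bound of the form $C\,\omega_\phi(\delta_i)|\Gamma|\to 0$, where the uniform $L^\infty$ bounds on $w_k^i$ and $z_k^i$ come from Lemma~\ref{lem convex hull}(iii).

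The main obstacle is securing the constraint $(V_i,U_i)\in X_0^c$, namely strict containment of the perturbed state in the open set $\mathcal{U}$. The subtlety is that the admissible segment in cell $Q_k^i$ is calibrated to the frozen value $(\tilde v_k,\tilde u_k)$, whereas the perturbation is actually added at the true field value $(V_0+V(x,t),U_0+U(x,t))$. This is resolved by compactness and uniform continuity on $\overline{\Gamma}$: by Lemma~\ref{lem geometric U} the translated segment $(\tilde v_k,\tilde u_k)+[-p_k,p_k]$ lies at positive distance $d_k$ from $\partial\mathcal{U}$, so one first chooses $\epsilon_k<d_k/2$, and then selects $\delta_i$ small enough that uniform continuity of $(V_0+V,U_0+U)$ makes the within-cube variation absorb into the remaining margin. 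Combined with the linearity of \eqref{ci simplified} and the disjoint supports of the rescaled plane waves, this guarantees $(V_i,U_i)\in X_0^c$.
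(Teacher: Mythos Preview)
Your argument is correct and follows essentially the same route as the paper's proof: freeze the field at a center point, use Lemma~\ref{lem geometric U} to extract an admissible segment, feed it to Lemma~\ref{lem plane waves}, rescale the resulting plane wave into a small cell, and sum over a disjoint family of cells covering most of $\Gamma$. The paper uses balls $B_r(x_0)\times(t_0-r,t_0+r)$ and selects finitely many centers so that a discrete inequality like your Riemann sum already holds at a fixed scale $r=1/k$, whereas you tile by cubes of side $\delta_i\to 0$ and pass to the limit; these are interchangeable. One small point worth making explicit in your write-up: the distances $d_k$ can be bounded below uniformly in $k$ because $(V,U)\in C_c^\infty(P)$ forces the image of $(V_0+V,U_0+U)$ to be a compact subset of the open set $\mathcal{U}$, and Lemma~\ref{lem geometric} gives $\mathrm{dist}(\sigma,\partial\mathcal{U})\ge\tfrac12\,\mathrm{dist}((\tilde v_k,\tilde u_k),\partial\mathcal{U})$; this is what allows a single $\delta_i$ to absorb the within-cube variation across all cells simultaneously.
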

%

\begin{proof}
Fix any point $(x_0, t_0) \in \Gamma$ and note that $(V, U) + (V_0, U_0)$ takes values in $\Uset$. Applying Lemma \ref{lem geometric U} yields the segment $\sigma$ with $(\tilde v, \tilde u) = (V(x_0, t_0), U(x_0, t_0)) + (V_0, U_0)$. Choose $r>0$ such that $(V(x, t), U(x, t)) + (V_0, U_0) + \sigma \subset \Uset$ for any $(x, t) \in B_r(x_0) \times (t_0 - r, t_0 + r)$.  It exists thanks to the continuity of $(V,U)$.

For any $\epsilon > 0$ consider a pair $(v, u)$ as in Lemma \ref{lem plane waves} and define 
\[
(v_{x_0,t_0,r}, u_{x_0,t_0,r})(x, t) := (v, u) \LC \tfrac{x - x_0}{r}, \tfrac{t - t_0}{r} \RC.
\]
Clearly, for $\epsilon$ small enough,  $(V, U) + (v_{0,r}, u_{0,r}) \in X_0^c$. Moreover
\begin{equation}\label{L1 est1}
\int_{B_r(x_0) \times (t_0 - r, t_0 + r)} |v_{x_0,t_0,r}| \,dxdt \ge \alpha c_0 \LC C_0 + \trace R_0 - |V_0 + V(x_0, t_0)|^2 \RC r^{n+1}.
\end{equation}
By continuity there exists an $r_0$ such that for all $r< r_0$ the above holds for every $(x,t)$ with $B_r(x) \times (t - r, t + r) \subset \Gamma$. 

Set $r = \tfrac{1}{k} < r_0$ and pick finitely many points $(x_j, t_j)$ such that $B_r(x_j) \times (t_j - r, t_j + r) \subset \Gamma$ are pairwise disjoint and satisfy
\begin{equation}\label{L1 est2}
\sum_j \LC C_0 + \trace R_0 - |V_0 + V(x_0, t_0)|^2 \RC r^{n+1} \ge \bar c \LC \LC C_0 + \trace R_0 \RC |\Gamma| - \int_\Gamma |V_0 + V(x,t)|^2\,dxdt \RC
\end{equation}
for some geometric constant $\bar c > 0$. So now we define
\[
(V_k, U_k) := (V, U) + \sum_j (v_{x_j,t_j,r}, u_{x_j,t_j,r}).
\]

It is clear that $(V_k, U_k) \in X_0^c$ since the supports of $(v_{x_j,t_j,r}, u_{x_j,t_j,r})$ are pairwise disjoint. Moreover $(V_k, U_k) \rightharpoonup^* (V, U)$ in $L^\infty$. Finally we see that \eqref{L^1 coercive constant} follows from the above two estimates \eqref{L1 est1} and \eqref{L1 est2}. 
\end{proof}

\begin{remark}
Depending on the values of $V_0, U_0, R_0$, the set $X_0^c$ may be empty. In this case Proposition \ref{prop L^1 coercive} is void, but still holds true.
\end{remark}
\begin{remark}\label{rem size}
Taking the trace of element $X_0^c$ shows that:
$$
\sup_{P}|V_i|\leq 2 |V_0|+2(C_0+\trace  R_0).
$$
\end{remark}

\section{Discretization and convex integration}\label{sec ci}

Now let's come back to the system \eqref{ci system}, but with $\ro$, $R_0$ being possibly non-constant functions. At this point we do not restrict ourselves to only consider $(\ro, \vo, U_0, R_0)$ to be a solution to \eqref{isentropic equivalent}, but to be some general continuous functions such that on an open set $P$, \eqref{def U_0} holds true, $R_0>0$ as a matrix, and $\rho_0$ verifies a uniform condition as \eqref{bounds rho}. The goal is to construct infinitely many solutions $(\widetilde V, \widetilde U)$ to the problem \eqref{ci system}.

Following \cite{chiodaroli2015global} (and also \cite{de2009euler}), we will achieve \eqref{ci eqn} and \eqref{ci constraint =} by first considering the relaxed condition
\begin{equation}\label{ci constraint <}
{(\vo + \widetilde V) \otimes (\vo + \widetilde V) \over \ro} - (\uo + \widetilde U) < {|\vo|^2 \over n\ro} \id + \told. 
\end{equation}

Define the set 
\begin{equation}\label{defn X_0}
X_0 := \LCB (\widetilde V, \widetilde U) \in C^\infty_c(P; \R^n \times \sym_0): \ (\widetilde V, \widetilde U) \text{ solves } \eqref{ci eqn} \text{ and } \eqref{ci constraint <} \RCB.
\end{equation}
Obviously $X_0$ is nonempty since $0 \in X_0$ thanks to \eqref{def U_0} and $R_0 > 0$. Then we consider $X$ to be the closure of $X_0$ in the $L^\infty$ weak-$\ast$ topology. The metrizability of such a topology is ensured by the boundedness (in weak-$\ast$) of $X$ in $L^\infty$, and hence it generates a complete metric space $(X, d)$. Since elements of $X$ solve \eqref{ci eqn}, therefore the goal is to show that the saturation \eqref{ci constraint =} holds on a residual set so that a 
Bair\'e category argument applies. 

The main result of this section is the following.
\begin{lemma}\label{lem conv int}
Let $(\ro, V_0, R_0) \in C^0(\R^n\times \R; \R \times \R^n \times \sym)$ be given with $\ro $ satisfying \eqref{bounds rho} and $R_0$ being positive definite in some open set $P \subset \R^n\times \R$. Let $U_0$ be given as in \eqref{def U_0}. There exist infinitely many $(\tv, \tu) \in L^\infty(\R^n\times \R; \R^n \times \sym_0)$ which are compactly supported in $P$ and satisfy \eqref{ci eqn} and \eqref{ci constraint =}.
\end{lemma}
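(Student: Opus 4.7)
The plan is to use the De Lellis--Sz\'ekelyhidi Baire category framework, with the key adaptation being a trace-level saturation functional compatible with general positive-definite Reynolds stresses. By Remark \ref{rem size} combined with Lemma \ref{lem convex hull}(iii), the set $X_0$ from \eqref{defn X_0} is bounded in $L^\infty(P; \R^n \times \sym_0)$. Let $X$ be its weak-$\ast$ closure in $L^\infty$; on this bounded set the weak-$\ast$ topology is metrizable by some distance $d$, and $(X, d)$ is a complete metric space. Since \eqref{ci eqn} is linear, every element of $X$ solves it, and $X$ is nonempty because $0 \in X_0$ (here $\told > 0$ on $P$ is used).

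\textbf{Matrix inequality in the limit and saturation functional.} Define the gap matrix and the saturation functional
\[
S(\tv, \tu) := \frac{|\vo|^2}{n\ro}\,\id + \told + (\uo + \tu) - \frac{(\vo + \tv) \otimes (\vo + \tv)}{\ro}, \qquad I(\tv, \tu) := \int_P \frac{|\vo + \tv|^2}{\ro}\,dx\,dt.
\]
First, $S(\tv, \tu) \geq 0$ a.e.\ on $P$ for every $(\tv, \tu) \in X$: for any nonnegative scalar $\phi \in L^1(P)$ and symmetric $\Phi \geq 0$ in $L^\infty(P)$, the map $(\tv, \tu) \mapsto \int \phi\,\Phi : S(\tv, \tu)$ is weak-$\ast$ upper semicontinuous (its only nonlinear term $\int \phi\,|\Phi^{1/2}(\vo + \tv)|^2/\ro$ is a convex integrand), so the strict positivity on $X_0$ passes to nonnegativity in the closure. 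Second, $I$ is convex and weak-$\ast$ lsc on $X$, so by the classical Baire lemma its set $\mathcal{C}$ of continuity points in $(X, d)$ is a dense $G_\delta$. Since $\uo$ and $\tu$ are traceless,
\[
\int_P \trace S(\tv, \tu)\,dx\,dt = \int_P \Bigl(\tfrac{|\vo|^2}{\ro} + \trace \told\Bigr) dx\,dt - I(\tv, \tu),
\]
so $\int_P \trace S$ is continuous at every $(\tv, \tu) \in \mathcal{C}$. The target is to show $\int_P \trace S(\tv, \tu) = 0$ on $\mathcal{C}$; combined with $S \geq 0$, this forces $S = 0$ a.e.\ on $P$, which is precisely \eqref{ci constraint =}.

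\textbf{Coercivity via discretization.} The main tool is the following globalization of Proposition \ref{prop L^1 coercive}: for every $(\tv, \tu) \in X_0$ and every $\epsilon > 0$ there exists $(\tv', \tu') \in X_0$ with $d((\tv, \tu), (\tv', \tu')) < \epsilon$ and
\[
\|\tv' - \tv\|_{L^1(P)} \geq c \int_P \trace S(\tv, \tu)\,dx\,dt,
\]
with $c > 0$ depending only on $\Lambda$ from \eqref{bounds rho} and on $\sup_{\overline P}(|\vo| + |\told|)$. The plan to prove it is: partition $P$ into small cubes $Q_i$ on which $(\ro, \vo, \told)$ are nearly constant; on each $Q_i$ apply the rescaling from Section \ref{sec bb conv} to reduce to the constant-coefficient setting, invoke Proposition \ref{prop L^1 coercive} with the local constant data, and sum the pairwise disjoint local perturbations to produce a global element of $X_0$. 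The errors from piecewise-constant approximation of $(\ro, \vo, \told)$ are absorbed by refining the partition via uniform continuity on compact subsets.

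\textbf{Contradiction, main obstacle, infinitely many.} Assume for contradiction $(\tv, \tu) \in \mathcal{C}$ has $\delta := \int_P \trace S(\tv, \tu)\,dx\,dt > 0$. By density, pick $(\tv_m, \tu_m) \in X_0$ with $d((\tv_m, \tu_m), (\tv, \tu)) < 1/(2m)$; continuity of $I$ at $(\tv, \tu)$ gives $I(\tv_m) \to I(\tv)$, and together with weak convergence of $\vo + \tv_m$ to $\vo + \tv$ in the Hilbert space $L^2(P; \ro^{-1}dx\,dt)$, the Radon--Riesz property upgrades this to strong $L^2$ (hence $L^1$) convergence of $\tv_m$ to $\tv$. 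Apply the coercivity to each $(\tv_m, \tu_m)$ to obtain $(\tv'_m, \tu'_m) \in X_0$ with $d((\tv_m, \tu_m), (\tv'_m, \tu'_m)) < 1/(2m)$ and $\|\tv'_m - \tv_m\|_{L^1(P)} \geq c\int_P \trace S(\tv_m, \tu_m) \to c\delta > 0$ (the latter by continuity of $\int_P \trace S$ at $(\tv, \tu)$). However $d((\tv'_m, \tu'_m), (\tv, \tu)) < 1/m$ as well, so the same Radon--Riesz argument at $(\tv, \tu) \in \mathcal{C}$ gives $\tv'_m \to \tv$ strongly in $L^1$, forcing $\|\tv'_m - \tv_m\|_{L^1} \to 0$, a contradiction. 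The essential obstacle is that a classical Baire argument based only on Lemma \ref{lem convex hull}(iv) would saturate only $\lambda_{\min}(S) = 0$, which for non-diagonal $\told$ cannot force $S = 0$; the pair ``$S \geq 0$ plus $\int_P \trace S = 0$'' closes this gap precisely because the right-hand side of the coercivity features the full $\trace S$. Finally, infinitely many such solutions exist because $\mathcal{C}$ is residual and $X$ is infinite (already Proposition \ref{prop L^1 coercive} applied to $(\tv, \tu) = 0$ produces infinitely many distinct elements of $X_0$).
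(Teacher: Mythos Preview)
Your proposal is correct and follows essentially the same Baire-category scheme as the paper: the coercivity step you sketch is exactly Lemma~\ref{lem beta ineq}, and your contradiction argument is a rephrasing of Proposition~\ref{prop continuity}, with the same key observation that saturating $\int_P \trace S = 0$ together with $S\ge 0$ forces $S\equiv 0$. The only cosmetic differences are that you work with the scalar functional $I$ and invoke Radon--Riesz explicitly, whereas the paper phrases things via continuity points of the identity map $(X,d)\to L^2$; your upper-semicontinuity argument for $S\ge 0$ on all of $X$ is slightly slicker than the paper's, which only checks $\widehat M\ge 0$ at continuity points using the available strong $L^2$ convergence there.
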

The proof of the above lemma relies on the following procedure which involves discretization of the problem and convex integration with a general non-diagonal Reynolds stress $R_0$ as performed in the previous section.  

Given $(\ro, V_0, R_0) \in C^0(\R^n\times \R; \R \times \R^n \times \sym)$ which satisfy the assumption of Lemma \ref{lem conv int}, and for a fixed $(\widetilde V, \widetilde U) \in X_0$ which is compactly supported in $P$, we denote
\begin{equation}\label{def M}
\begin{split}
M & := {|\vo|^2 \over n\ro} \id + \told - {(V_0 + \widetilde V) \otimes (V_0 + \widetilde V) \over \ro} + U_0 + \widetilde U, \\
\eigenmin(M) & := \text{ the smallest eigenvalue of } M. 
\end{split}
\end{equation}
From \eqref{ci constraint <} we see that $M > 0$ is positive definite on $P$, and hence $\eigenmin(M)> 0$ on $P$. Let us consider $\Omega_1\subset\Omega$ two compact subsets of $P$ such that  for $\delta_1>0$ small enough any cube of size $\delta_1$ centered in $\Omega_1$ is included in $\Omega$, and such that
$$
\int_{\Omega_1} \trace  M \,dx\,dt\geq \frac{1}{2} \int_P  \trace  M \,dx\,dt.
$$
Because $\Omega$ is compact and $M$ and $R_0$ are continuous on $P$, we have
\begin{equation}\label{min eigen}
\lambda_* := \min_{\overline\Omega} [\min (\eigenmin(M), \eigenmin(R_0))] > 0. 
\end{equation}
With this setup, we first prove the following lemma.
\begin{lemma}[$L^1$-coercivity]\label{lem beta ineq}
For any $(\widetilde V, \widetilde U) \in X_0$, there exists a sequence $\left\{(\tv_i, \tu_i)\right\} \subset X_0$ converging weak-$\ast$ to $(\tv, \tu)$ such that
\begin{equation}\label{total L1 coercive}
\LN \tv_i - \tv \RN_{L^1(P)} \ge {c_0 \over \Lambda} \int_P \textup\trace M \,dxdt
\end{equation}
for some geometric constant $c_0 >0$, where $\Lambda$ gives the bounds for $\ro$ as in \eqref{bounds rho}.
\end{lemma}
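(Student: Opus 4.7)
The plan is to reduce to the constant-state setting of Section \ref{sec bb conv} by partitioning $\Omega_1$ into small space--time cubes on which $(\ro, V_0, R_0)$ are almost constant, applying the plane wave construction of Lemmas \ref{lem geometric U}--\ref{lem plane waves} locally with frozen coefficients, and summing the pairwise-disjoint contributions. Uniform continuity of $(\ro, V_0, R_0)$ on $\overline\Omega$ together with the quantitative positivity $\lambda_* > 0$ from \eqref{min eigen} will guarantee that the composite perturbation preserves the strict inequality \eqref{ci constraint <}. Fix $(x_0, t_0) \in \Omega_1$ and set $\rho_* := \ro(x_0, t_0)$, $V_* := V_0(x_0, t_0)$, $R_* := R_0(x_0, t_0)$. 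Through the rescaling $(V, U)(y, \tau) = (\tv/\sqrt{\rho_*}, \tu)(y, \tau\sqrt{\rho_*})$, the frozen problem at $(x_0, t_0)$ becomes the constant-state problem \eqref{ci scaled} with $C_0 = |V_*|^2/\rho_*$. Applying Lemma \ref{lem geometric U} at the scaled state $\bigl((V_* + \tv(x_0,t_0))/\sqrt{\rho_*},\, U_0(x_0,t_0) + \tu(x_0,t_0)\bigr) \in \Uset$ yields an admissible segment $\sigma$ with
\[
\lambda |b - a| \ge \bar c \LC C_0 + \trace R_* - |\tilde v|^2 \RC = \bar c\, \trace M(x_0, t_0),
\]
where the second equality follows directly from \eqref{def M} and the fact that $U_0, \tu$ are traceless. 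Lemma \ref{lem plane waves} then supplies a plane wave $(v, u) \in C^\infty_c(B_1 \times (-1, 1))$ near $\sigma$; undoing the rescaling, the function
\[
(v_{x_0, t_0, r}, u_{x_0, t_0, r})(x, s) := \bigl(\sqrt{\rho_*}\, v,\, u\bigr)\LC \tfrac{x-x_0}{r}, \tfrac{s-t_0}{r\sqrt{\rho_*}} \RC
\]
is compactly supported in the anisotropic cube $Q_{x_0, t_0, r} := B_r(x_0) \times (t_0 - r\sqrt{\rho_*}, t_0 + r\sqrt{\rho_*})$, solves \eqref{ci eqn}, and, after tracking the Jacobians through Lemma \ref{lem plane waves}(ii), satisfies
\[
\int_{Q_{x_0, t_0, r}} |v_{x_0, t_0, r}|\, dx\, ds \ge \alpha \bar c\, \rho_* r^{n+1}\, \trace M(x_0, t_0).
\]
Taking the $\epsilon$ of Lemma \ref{lem plane waves}(i) and $r$ sufficiently small, uniformly in $(x_0, t_0) \in \Omega_1$ (quantified by $\lambda_*$ and the uniform continuity of the coefficients on $\overline\Omega$), secures $(\tv + v_{x_0, t_0, r}, \tu + u_{x_0, t_0, r}) \in X_0$.

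\textbf{Assembly of the $L^1$ bound.} For $r < \min(r_0, \delta_1)$, select pairwise disjoint cubes $Q_j := Q_{x_j, t_j, r}$ with $(x_j, t_j) \in \Omega_1$ by a greedy/Vitali-type procedure so that, using continuity of $\trace M$ on $\overline\Omega_1$ via Riemann-sum approximation,
\[
\sum_j |Q_j|\, \trace M(x_j, t_j) \ge \tfrac12 \int_{\Omega_1} \trace M\, dx\, dt \ge \tfrac14 \int_P \trace M\, dx\, dt.
\]
Define $(\tv_i, \tu_i) := (\tv, \tu) + \sum_j (v_{x_j, t_j, r_i}, u_{x_j, t_j, r_i})$ with $r_i \downarrow 0$; disjointness of supports keeps $(\tv_i, \tu_i) \in X_0$, while the rapidly increasing oscillation frequency $\sim 1/r_i$ of the rescaled plane waves yields $(\tv_i, \tu_i) \rightharpoonup^\ast (\tv, \tu)$ in $L^\infty$ via Riemann--Lebesgue. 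The decisive $1/\Lambda$ factor arises from $|Q_j| = 2\sqrt{\rho_{*j}}\, r_i^{n+1}$, whence $\rho_{*j}\, r_i^{n+1} = \tfrac{\sqrt{\rho_{*j}}}{2}|Q_j| \ge \tfrac{1}{2\Lambda}|Q_j|$ by \eqref{bounds rho}, so
\[
\LN \tv_i - \tv \RN_{L^1(P)} \ge \alpha \bar c \sum_j \rho_{*j} r_i^{n+1}\, \trace M(x_j, t_j) \ge \frac{\alpha \bar c}{2\Lambda} \sum_j |Q_j|\, \trace M(x_j, t_j) \ge \frac{\alpha \bar c}{8\Lambda} \int_P \trace M\, dx\, dt,
\]
which is the desired estimate with $c_0 := \alpha \bar c / 8$.

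\textbf{Main obstacle.} The principal technical subtlety is the careful bookkeeping of the anisotropic space--time scaling: each local plane wave carries a weight $\rho_* r^{n+1}$ in its $L^1$ mass (one $\sqrt{\rho_*}$ from the velocity rescaling, a second from the Lebesgue measure of the time rescaling), while the corresponding cube has volume proportional to $\sqrt{\rho_*}\, r^{n+1}$; the net excess factor is exactly $\sqrt{\rho_*} \ge 1/\Lambda$, producing the sharp weight $1/\Lambda$ in the final bound rather than the cruder $1/\Lambda^2$. A secondary point is preserving \eqref{ci constraint <} uniformly across all cubes, which requires a threshold $r_0$ independent of the base point; this is furnished by the positivity $\lambda_* > 0$ on the compact $\overline\Omega$ (quantifying the distance to $\partial\Uset$) together with the uniform modulus of continuity of $(\ro, V_0, R_0)$.
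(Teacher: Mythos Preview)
Your approach is correct and follows essentially the same strategy as the paper: discretize $\Omega_1$ into small pieces, freeze coefficients, insert the localized plane waves of Lemmas \ref{lem geometric U}--\ref{lem plane waves}, and sum. The execution differs only in packaging: the paper invokes Proposition \ref{prop L^1 coercive} on each cube of a fixed uniform grid (with coefficients frozen at cube averages/minima and $R_0$ shrunk by $\lambda_*/(16n)$ to build in an explicit buffer for the $X_0$-membership check), whereas you inline that proposition, freeze at the center point, and use anisotropic cylinders whose time-extent carries the factor $\sqrt{\rho_*}$ --- this last choice is what lets you read off the sharp $1/\Lambda$ weight directly from the Jacobian, matching the paper's bound. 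Your $X_0$-membership verification is terser than the paper's chain of matrix inequalities but rests on the same ingredients (the distance estimate hidden in Lemma \ref{lem geometric} and uniform continuity against $\lambda_*$); the ``Riemann--Lebesgue'' label for the weak-$\ast$ convergence is a misnomer, but the actual mechanism (mean-zero perturbations on shrinking cells, uniformly bounded in $L^\infty$) is correct.
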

%
\begin{proof}
{\bf Step 1.} The idea is to perform a discretization. For that, let's first consider a localized problem. Take a fixed point $(x_0, t_0) \in \Omega_1$ and choose a sufficiently small open cube $Q$ centered at $(x_0, t_0)$ (especially of size smaller than $\delta_1$). Denote 
\begin{equation}\label{averages}
\begin{split}
\overline V := \fint_Q (V_0+\widetilde V)\,dxdt, \quad \overline U := \fint_Q (U_0+\widetilde U)\,dxdt, \quad \overline R_0 :=   \fint_Q R_0\,dxdt.
\end{split}
\end{equation}
Consider
\begin{equation}\label{def M_Q}
\overline M_Q := {C_Q \over n} \id +  \overline R_Q - {\overline V \otimes \overline V \over \underline\rho } + \overline U \in \sym, 
\end{equation}
where
\begin{equation}\label{def C_Q}
\underline\rho  :=\min_Q \rho_0, \quad C_Q := \min_{Q} \frac{ {|\vo|^2} }{\rho_0}\geq 0, \quad \overline R_Q :=  - {{\lambda_*} \over {16 n}}\id+\overline R_0.
\end{equation}
The uniform continuity of $\ro, \vo, \told$, $(\tv, \tu)$ in $\Omega$ implies that for any $\varepsilon > 0$ there exists some $\delta > 0$ independent of $Q$ such that whenever $|Q| < \delta$, the fluctuation of these quantities over $Q$ is smaller than $\varepsilon$. In particular choosing $\varepsilon$ small enough with respect to $\lambda_*$, we can ensure that for $\delta$ small enough,
\begin{equation*}
\overline R_Q > 0, \quad \sup_Q \| M - \overline M_Q \| < {\lambda_* \over {8n}}, \quad \sup_Q \|\overline R_0 - R_0\| < {\lambda_* \over 64n}, \quad \sup_Q \left|C_Q-|V_0|^2/\rho_0\right|\leq {\lambda_* \over 64n},
\end{equation*}
where $\|\cdot\|$ is the standard matrix norm.
Together with  \eqref{def C_Q} , we get
\begin{equation}\label{bound M_Q}
{C_Q \over n} \id + \overline R_Q < {|\vo|^2 \over n\ro}\id + R_0 - {\lambda_* \over 32n}\id,
\end{equation}
and
\begin{equation}\label{M bound}
\overline M_Q = M + (\overline M_Q - M) > M - {\lambda_* \over {8n} }\id > 0, \quad \text{ and hence } \quad  \trace \overline M_Q > {1\over 4}   \trace M \text{ on } Q.
\end{equation}
Consider the rescaled set $\mathring Q := \{ (x, t/\sqrt{\underline\rho }): \ (x, t) \in Q \}$. We now consider Proposition \ref{prop L^1 coercive} with the constants 
\[
(V_0, U_0, R_0, C_0) = \LC \frac{\overline V}{\sqrt{\underline\rho }}, \overline U, \overline R_Q, C_Q \RC.
\]
Denote $X_0^Q$ the set $X_0^c$ defined in (\ref{X_0 const}) on the set $\mathring Q$ (instead of $P$).  Thanks to \eqref{def M_Q} and \eqref{M bound}, $(0,0)\in X_0^Q$. 
Therefore, from Proposition  \ref{prop L^1 coercive}, there exists a sequence $(\mathring V_i,\mathring U_i)\in X^Q_0$ converging weakly to 0, and such that for every $i$:
\begin{eqnarray*}
&&\|\mathring V_i\|_{L^1(\mathring Q)}\geq c_1 (C_Q+\trace  \overline R_Q-|\overline V|^2)|\mathring Q|\geq c_1 (\trace  \overline M_Q)|\mathring Q|.
\end{eqnarray*}
Since $(\mathring V_i, \mathring U_i)\in X^Q_0$, it verifies   \eqref{ci eqn} and 
$$
\left(\frac{\overline V}{\sqrt{\underline\rho }}+\mathring V_i \right)\otimes \left(\frac{\overline V}{\sqrt{\underline\rho }}+\mathring V_i \right)-\overline  U+\mathring U_i< \frac{\overline  C_Q}{n}\id+  \overline R_Q.
$$
Consider the change of variable  $(\mathring V_i, \mathring U_i)(x,t) := \LC V_i/\sqrt{\underline\rho }, U_i \RC(x, t\sqrt{\underline\rho })$. The functions $(V_i, U_i)$ are now compactly supported in $Q$, and they still verify  \eqref{ci eqn} and converges weakly to 0. Moreover we have on $Q$ the following list of inequality, where we use the definition of $\underline\rho $ for the first inequality, and   Remark \ref{rem size} which ensures that the constant $C$ is independent of the sequence $V_i$ for the second one. The constant $C$ being fixed, we can get the third inequality by taking $\delta$ even smaller if needed. The last inequality follows from  \eqref{def C_Q}.
\begin{eqnarray*}
&& \frac{(V_0+\widetilde V+V_i)\otimes (V_0+\widetilde V+V_i)}{\rho_0}-(U_0+\widetilde{U}+U_i)\\
&&\quad \leq  \frac{(V_0+\widetilde V+V_i)\otimes (V_0+\widetilde V+V_i)}{\underline\rho }-(U_0+\widetilde{U}+U_i)\\
&&\quad \leq  \frac{(\overline V +V_i)\otimes (\overline V +V_i)}{\underline\rho }-(\overline U+U_i)+ C(|V_0+\widetilde V-\overline V_0|+|U_0+\widetilde U-\overline U_0|)\id\\
&&\quad \leq  \frac{\overline  C_Q}{n}\id+  \overline R_Q+ \frac{\lambda_*}{64n}\id\\
&&\quad < \frac{|V_0|^2}{n\rho_0}\id +R_0.
\end{eqnarray*}
Hence, $V_i+\widetilde V\in X_0$. And from the change of variables and \eqref{M bound}:
\begin{equation}\label{punch3}
\|V_i\|_{L^1(Q)}\geq  c_1 \underline\rho (\trace \overline M_Q)|Q|
\geq \frac{c_1}{4\Lambda}\int_Q \trace M \,dx\,dt.
\end{equation}

\medskip

{\bf Step 2.} Note again that the uniform continuity of $\ro, \vo, \told$, $(\tv, \tu)$ in $\Omega$ indicates that the size of $Q$ in {\bf Step 1} is independent of the choice of the point $(x_0, t_0)\in \Omega_1$. So we can repeat the argument of {\bf Step 1} at all points $(x, t) \in \Omega_1$. Taking $\delta<\delta_1$ small enough, we can consider the grid of points of $\R^n\times \R^+$: $(m_1 \delta, \cdot\cdot\cdot, m_n \delta, l\delta)$,  $l\in \N$, $(m_1,\cdot\cdot\cdot,m_n)\in \Z^n$. Consider a finite set of cubes of size $\delta$ with vertices on this grid covering  $\Omega_1$. Note that they have  non-overlapping interiors, and are all subsets of $\Omega$. Denote this list of cubes $\{Q_k: k=1,\cdot\cdot,N\}$. For each $k$, we denote $\{(V_i^k,U_i^k)\}_{k\in \N}$ the sequence of functions compactly supported in $Q_k$ defined in {\bf Step 1}, and define in $P$:
\[
\tv_i := \tv +\sum^N_{k=1} V_i^k, \qquad \tu_i := \tu +\sum^N_{k=1} U_i^k. 
\]
For $i$ fixed, all the  $(V_i^k, U_i^k)$ for $k=1\cdot\cdot\cdot N$ have disjoint supports. So from Step 1, $\tv_i\in X_0$. For all $k$ fixed, $V^k_i$ converges weakly to 0, so $\tv_i$ converges weakly to 0 as $i \to \infty$. 
Finally, from 
 \eqref{punch3}, we get
 $$
 \|\tv_i-\tv\|_{L^1(P)}\geq \sum_{k=1}^N \|V^k_i\|_{L^1(Q_k)}\geq \frac{c_1}{4\Lambda}\int_{Q_k} \trace  M\,dx\,dt\geq  \frac{c_1}{4\Lambda}\int_{\Omega_1} \trace  M\,dx\,dt\geq  \frac{c_1}{8\Lambda}\int_{P} \trace  M\,dx\,dt.
 $$
This completes the proof of the lemma.
\end{proof}

A direct consequence of the above lemma is the following.
\begin{proposition}[Points of continuity of the identity map]\label{prop continuity}
Let $(\widehat V, \widehat U) \in X$ be a point of continuity of the identity map $I$ from $(X,d)$ to $L^2(\R^n \times \R)$. Then $(\widehat V, \widehat U)$ satisfies \eqref{ci constraint =}.
\end{proposition}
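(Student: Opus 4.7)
The plan is to prove \eqref{ci constraint =} at the continuity point $(\widehat V, \widehat U)$ by showing $M_{\widehat V} = 0$ a.e.\ on $P$, where $M_{\widehat V}$ denotes the matrix in \eqref{def M} evaluated at $(\widehat V, \widehat U)$. The core tool is the $L^1$-coercivity from Lemma \ref{lem beta ineq} applied along an $X_0$-approximation of $(\widehat V, \widehat U)$, combined with a Cantor diagonal extraction and the strong $L^2$ convergence supplied by the continuity hypothesis.

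Since $X$ is the $d$-closure of $X_0$, first choose a sequence $(\widehat V_k, \widehat U_k) \in X_0$ with $(\widehat V_k, \widehat U_k) \to (\widehat V, \widehat U)$ in $d$; by continuity of $I$ at $(\widehat V, \widehat U)$, this upgrades to $\widehat V_k \to \widehat V$ strongly in $L^2(P)$. For each $k$, Lemma \ref{lem beta ineq} produces $\{(V_{k,j}, U_{k,j})\}_j \subset X_0$ converging to $(\widehat V_k, \widehat U_k)$ in $d$ as $j \to \infty$ and satisfying
\[
\|V_{k,j} - \widehat V_k\|_{L^1(P)} \ge \frac{c_0}{\Lambda} \int_P \textup{trace}\, M_k \,dxdt,
\]
where $M_k$ is \eqref{def M} for $(\widehat V_k, \widehat U_k)$. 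A Cantor diagonalization yields indices $j(k)$ with $(V_{k,j(k)}, U_{k,j(k)}) \to (\widehat V, \widehat U)$ in $d$, hence strongly in $L^2(P)$ again by the continuity hypothesis. The triangle inequality combined with $L^2 \hookrightarrow L^1$ on the bounded set $P$ then gives $\|V_{k,j(k)} - \widehat V_k\|_{L^1(P)} \to 0$, which via the coercivity bound forces $\int_P \textup{trace}\, M_k \,dxdt \to 0$. Since the $U$-contribution to $\textup{trace}\, M$ vanishes ($U_0 + \widetilde U$ is traceless) and the remainder is quadratic in $V$, the strong $L^2(P)$ convergence of $\widehat V_k$ also gives $\int_P \textup{trace}\, M_k \,dxdt \to \int_P \textup{trace}\, M_{\widehat V} \,dxdt$, so $\int_P \textup{trace}\, M_{\widehat V} \,dxdt = 0$.

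The remaining step upgrades vanishing trace to vanishing matrix. Each $M_k$ is strictly positive definite a.e.\ because $(\widehat V_k, \widehat U_k) \in X_0$ satisfies \eqref{ci constraint <}. The matrix $M$ depends linearly on $\widetilde U$ and quadratically on $\widetilde V$; at the continuity point the quadratic part converges strongly in $L^1$ (thanks to $\widehat V_k \to \widehat V$ in $L^2$) while $\widehat U_k \to \widehat U$ weak-$\ast$, so $M_k \to M_{\widehat V}$ in the sense of distributions. Testing $M_k$ against $\phi\, \xi \otimes \xi$ for nonnegative $\phi \in C_c^\infty(P)$ and arbitrary $\xi \in \R^n$ preserves nonnegativity in the limit, and a separability argument in $\xi$ yields $M_{\widehat V} \ge 0$ a.e.\ as a matrix. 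A positive semi-definite matrix of vanishing trace is the zero matrix, so $M_{\widehat V} = 0$ a.e.\ on $P$, which is precisely \eqref{ci constraint =}. The main subtlety I anticipate is exactly this passage from the strict pointwise positivity $M_k > 0$ to the semi-definiteness $M_{\widehat V} \ge 0$ in the weak-$\ast$ limit: the strong $L^2$ convergence at the continuity point is indispensable for controlling the quadratic $V \otimes V$ nonlinearity, as otherwise a PSD Young-measure defect would obstruct the conclusion.
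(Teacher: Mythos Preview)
Your proof is correct and follows essentially the same route as the paper: approximate $(\widehat V, \widehat U)$ from $X_0$, apply the $L^1$-coercivity lemma along the approximation, diagonalize, invoke the continuity hypothesis to force $\int_P \operatorname{tr} M_k \to 0$, and then pass positive semi-definiteness through the (distributional) limit to conclude $\widehat M = 0$ a.e. The paper's presentation is terser---it records $M_j \rightharpoonup^* \widehat M$ and $\widehat M \ge 0$ in two lines without separating the trace computation from the PSD passage---but the substance is identical.
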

%
\begin{proof}
By definition, there exists a sequence $(V_j, U_j) \in X_0$ converging weak-$\ast$ to $(\widehat V, \widehat U)$ with the property that $V_j \to \widehat V$ strongly in $L^2(P)$, and hence strongly in $L^1_{\text{loc}}(P)$. Lemma \ref{lem beta ineq} implies that for each $(V_j, U_j)$ one may find a sequence $\{(V_{j,i}, U_{j,i})\}$ converging weak-$\ast$ to $(V_j, U_j)$, satisfying \eqref{ci eqn}, and 
\begin{equation*}
\LN V_{j,i} - V_j \RN_{L^1(P)} \ge 
{c_1 \over 8 \Lambda} \int_{P} \trace M_j \,dxdt
\end{equation*}
where  $M_j$ are given in \eqref{def M} with $(\tv, \tu)$ being replaced by $(V_j, U_j)$. 
Applying a diagonal argument we obtain a subsequence $(V_{j, i(j)}, U_{j, i(j)})$ that converges weak-$\ast$ to $(\widehat V, \widehat U)$ and such that
\begin{equation*}
\lim_{j} \LN V_{j,i(j)} - \widehat V \RN_{L^1(P)} \ge {c_1 \over 8 \Lambda} \lim_j \int_{P} \trace M_j \,dxdt,
\end{equation*}
which implies that
\begin{equation}\label{limit M}
\lim_j \int_{P} \trace M_j \,dxdt = 0.
\end{equation}

Consider
\[
\widehat M := {|\vo|^2 \over n\ro} \id + \told - {(V_0 + \widehat V) \otimes (V_0 + \widehat V) \over \ro} + U_0 + \widehat U.
\]
We know that $M_j \rightharpoonup^* \widehat M$. Since $M_j > 0$, it follows that $\widehat M \ge 0$ a.e.. From \eqref{limit M} and the fact that  $\trace M_j \rightharpoonup^* \trace \widehat M$ we conclude that
\begin{equation*}
\lim_j \int_{P} \trace M_j \,dxdt = \int_{P} \trace \widehat M \,dxdt = 0.
\end{equation*}
Therefore $\trace \widehat M = 0$ a.e., and thus $\widehat M = 0$ a.e., which means $(\widehat V, \widehat U)$ satisfies \eqref{ci constraint =}.
\end{proof}

\begin{proof}[Proof of Lemma \ref{lem conv int}]
With the help of Proposition \ref{prop continuity}, Lemma \ref{lem conv int} follows from a Bair\'e category argument.
\end{proof}

From Lemma \ref{lem conv int} we immediately obtain the following proposition.
\begin{proposition}[Reduction to subsolutions]\label{prop reduction}
If \eqref{isentropic equivalent} has a solution $(\ro, \vo, \told) \in C^0(\R^n\times \R; \R \times \R^n \times \sym)$ with $\ro > 0$ and $\told > 0$ on a set $P$ of positive measure, then there are infinitely many bounded solutions $(\rho, v)$ to \eqref  {isentropic euler} with $\rho = \ro$.
\end{proposition}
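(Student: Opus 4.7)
The plan is to reduce the statement directly to Lemma~\ref{lem conv int}: apply that lemma to obtain infinitely many compactly supported perturbations $(\widetilde V, \widetilde U)$, set $V := V_0 + \widetilde V$, and verify algebraically that $(\rho_0, V)$ solves the isentropic Euler system \eqref{isentropic euler}. Distinct perturbations produce distinct momenta, so the claimed infinite family of Euler solutions is inherited from the infinite family furnished by Lemma~\ref{lem conv int}.

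To match the hypotheses of Lemma~\ref{lem conv int}, I would first exploit the continuity of $\rho_0$ and $R_0$: the set $\{\rho_0 > 0\} \cap \{R_0 > 0\text{ as a matrix}\}$ is open, contains $P$, and hence has positive Lebesgue measure. Fix any compact subset $K$ inside this open set; on $K$ the continuous $\rho_0$ is bounded above and below by strictly positive constants. Since Lemma~\ref{lem conv int} is stated under the global bound \eqref{bounds rho}, I would smoothly truncate $\rho_0$ outside $K$, leaving it unchanged on a neighborhood of $K$. This modification is harmless because the perturbations produced by Lemma~\ref{lem conv int} are compactly supported inside $K$, where the truncated and original $\rho_0$ coincide. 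Lemma~\ref{lem conv int} then provides infinitely many $(\widetilde V, \widetilde U) \in L^\infty$, compactly supported in $K^\circ$, satisfying \eqref{ci eqn} and the pointwise identity \eqref{ci constraint =}.

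For the verification step, set $V := V_0 + \widetilde V$. The continuity equation follows at once from the subsolution relation $\partial_t \rho_0 + \Div V_0 = 0$ combined with $\Div \widetilde V = 0$ from \eqref{ci eqn}. For the momentum equation, rewrite the constraint \eqref{ci constraint =} as
\[
\widetilde U = \frac{V \otimes V - V_0 \otimes V_0}{\rho_0} - R_0 \quad \text{on } K.
\]
Combining this identity with $\partial_t \widetilde V + \Div \widetilde U = 0$ and the subsolution's momentum equation (rewritten via \eqref{def U_0} as $\partial_t V_0 + \Div(V_0 \otimes V_0/\rho_0) + \nabla p(\rho_0) + \Div R_0 = 0$), the $\Div R_0$ terms cancel and a short calculation produces
\[
\partial_t V + \Div\!\LC \frac{V \otimes V}{\rho_0} \RC + \nabla p(\rho_0) = 0
\]
on $K$. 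Outside $K$ the perturbation vanishes, $V = V_0$, and the Euler momentum equation reduces to the subsolution's equation with $\Div R_0 = 0$; this holds under the standing convention, implicit in this framework, that $R_0$ is supported inside $P$ (the convex integration only removes $R_0$ where it is strictly positive).

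I expect the main difficulty to be bookkeeping rather than analysis: Lemma~\ref{lem conv int} is phrased with $\rho_0$ globally bounded and $R_0$ positive on an \emph{open} set, whereas the proposition only asserts pointwise positivity on a set of positive measure, so one must first upgrade these hypotheses by continuity and decide how $R_0$ is treated off $P$ in order for the resulting $(\rho_0, V)$ to be a genuine global weak solution of \eqref{isentropic euler}. Once this matching is handled, the remainder is a purely algebraic consequence of \eqref{ci eqn} and \eqref{ci constraint =}.
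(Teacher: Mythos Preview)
Your approach is essentially identical to the paper's: apply Lemma~\ref{lem conv int}, set $V_{\text{new}} := V_0 + \widetilde V$, and verify the continuity and momentum equations by the same algebraic manipulation you outline. The paper's proof is in fact shorter and less careful than yours; it simply invokes Lemma~\ref{lem conv int} without addressing the hypothesis-matching issues you raise (openness of $P$, global bounds on $\rho_0$), and its momentum computation tacitly uses \eqref{ci constraint =} as if it held globally.

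Two remarks on the extra care you take. First, the truncation of $\rho_0$ is unnecessary: inspecting the proof of Lemma~\ref{lem beta ineq}, the lower bound on $\rho_0$ is only used on compact subsets of $P$ (through $\underline\rho = \min_Q \rho_0$), so continuity and positivity on $P$ already suffice. Second, your worry about the momentum equation outside $P$ is well-founded in principle --- the identity \eqref{ci constraint =} only holds on $P$, so outside $P$ one is left with a residual $\Div R_0$ term --- but the paper does not treat this either. In the actual applications (Theorems~\ref{thm ci incomp} and~\ref{thm ci comp}) the set $P$ is always a full space-time slab on which $R_0>0$ everywhere, and the solutions are glued across $\partial P$ by matching boundary values of the subsolution, so the issue never arises. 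Your ``standing convention'' that $R_0$ is supported in $P$ is not quite how the paper handles it, but the end effect is the same.
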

\begin{proof}
From Lemma \ref{lem conv int} we know that under the assumption of Proposition \ref{prop reduction} one can find infinitely many bounded solutions $(\tv, \tu) \in C^\infty_c(P; \R^2 \times \sym_0)$ satisfying \eqref{ci eqn} and \eqref{ci constraint =}.

Now for any such $(\tv, \tu)$ define 
\[
\vn := \vo + \tv.
\]
Then 
\[
\partial_t \ro + \Div\ \vn = \partial_t \ro + \Div\ \vo + \Div\ \tv = 0.
\]
Applying \eqref{ci constraint =} and the definition \eqref{def U_0} that $\displaystyle U_0 := {\vo \otimes \vo \over \ro} - {|\vo|^2 \over n\ro} \id$ we have
\begin{align*}
& \partial_t \vn + \Div \LC {\vn \otimes \vn \over \rho_0} + p(\rho_0) \id \RC \\
= \ & \partial_t \vo + \partial_t \tv + \Div \LC \uo + \tu + {|\vo|^2 \over 2 \ro} \id + R_0 + p(\rho_0) \id \RC \\
= \ & \partial_t \vo + \Div \LC {\vo \otimes \vo \over \ro} + p(\rho_0) \id + R_0 \RC + \partial_t \tv + \Div \tu \\
= \ & 0.
\end{align*}
Taking $\displaystyle v := {\vn \over \ro}$ we see that $(\ro, v)$ solves \eqref{isentropic euler}.
\end{proof}
\begin{remark}\label{rk energy density}
At the energy level, taking trace in \eqref{ci constraint =} we see that after convex integration
\be\label{ci energy density}
\frac{\LV \vn \RV^2}{\ro} = \frac{\LV \vo \RV^2}{\ro} + \trace R_0.
\ee
This is to say, the `defect energy' of the subsolution due to the Reynolds tensor is injected into the weak Euler solutions through the convex integration.
\end{remark}
\begin{remark}\label{rk energy}
Notice that when $(\ro, \vo, R_0)$ is a piece-wise constant solution, like the ones constructed in \cite{chiodaroli2015global}, it follows from \eqref{ci energy density} that 
\begin{equation*}
\begin{split}
\partial_t E(\ro, \vn) & + \Div \LB \LC E(\ro, \vn) + p(\ro) \RC \vn \RB \\
= \ & \Div \LB \LC E(\ro, \vo) + {1\over 2} \trace R_0 + p(\ro) \RC \LC \vo + \tv \RC \RB = 0,
\end{split}
\end{equation*}
leading to a local energy balance.
\end{remark}

Proposition \ref{prop reduction} motivates us to define the following notion of subsolutions.
\begin{definition}[Subsolutions]\label{def subsoln}
A {\it subsolution} to the isentropic Euler system \eqref{isentropic euler} is a triple $(\ro, \vo, \told) \in C^0(\R^n\times \R; \R \times \R^n \times \sym)$ that solves \eqref{isentropic equivalent} with $\ro > 0$ and $\told > 0$ on a set $P$ of positive measure.
\end{definition}

\section{Application to incompressible flows}\label{sec incomp}

In this section we will apply the general scheme developed in Section \ref{sec ci} to treat the incompressible Euler equations. In this case, the system \eqref{eqn subsoln} for subsolutions changes to
\begin{equation}\label{eqn subsoln incomp}
\left\{
\begin{split}
\partial_t v + \Div (v \otimes v + p\id + R) & = 0, \\
\Div v & = 0,
\end{split}\right.
\end{equation}
where now density is take to be $\rho \equiv 1$ and the pressure $p$ becomes the Lagrange multiplier due to the incompressibility constraint. 

Our goal is to construct a large set of `wild' initial data of the incompressible Euler equations so that each such datum (1) generates infinitely many weak solutions and (2) those weak solutions satisfy the energy criterion. We will appeal to our convex integration scheme to handle the first part, provided that we are able to find a subsolution to \eqref{eqn subsoln incomp} with the Reynolds stress $R$ being positive definite. Regarding (2), we will need to ensure that the construction of the subsolutions is consistence with the energy law. 

\subsection{Energy compatible subsolutions}\label{subsec e-subsoln incomp}

In this subsection we precisely define the class of subsolutions we need for the convex integration and provide a way to construct them.
\begin{definition}[Energy compatible subsolutions]\label{defn e-subsoln incomp}
Let $\IE, T >0$, and $\mathcal{M}^+(\T^n; \sym)$ be the set of finite symmetric positive semidefinite matrix-valued (signed) Borel measures. We say that 
\[
(v, R) \in L^\infty(\R_+; L^2(\T^n)) \times L^\infty_{w^*}(\R_+; \mathcal{M}^+(\T^n; \sym))
\]
is an $(\IE, T)$-energy compatible subsolution of the incompressible Euler equations if the following conditions are satisfied
\begin{enumerate}[label=\rm(I\arabic*)]
\item \textup{(Existence of pressure)}\label{es pressure} 
   There exists some $p \in \mathcal{D}'(\R_+ \times \T^n)$ such that \eqref{eqn subsoln incomp} is satisfied in the sense of distribution on $\R_+ \times \T^n$.
\item \textup{(Short-time energy saturation)}\label{es saturate}
   For almost every $t \in [0, T]$ it holds that
   \begin{equation*}
   \frac12 \int_{\T^n} \LC {|v|^2} + \textup{\trace} R \RC \,dx = \IE.
   \end{equation*}
\item \textup{(Energy inequality)}\label{es ineq}
   For almost every $t \ge T$ it holds that
   \begin{equation*}
   \frac12 \int_{\T^n} \LC {|v|^2} + \textup{\trace} R \RC \,dx \le \IE.
   \end{equation*}  
\end{enumerate}
\end{definition}

One can show in the following proposition that for a smooth initial data, an energy compatible subsolution can be obtained through a classical vanishing viscosity limit. 
\begin{proposition}\label{prop e-subsoln incomp}
Let $v^0 \in C^1(\T^n)$ be divergence-free and for every $\nu > 0$, consider $v_\nu$ the (global) Leray solution to the Navier--Stokes equation with initial data $v^0$ which is divergence-free. Denote $\IE := \frac12 \int_{\T^n} |v^0|^2 \,dx$. Then there exist a $T>0$ and an $(\IE, T)$-energy compatible subsolution $(v, R)$ of the incompressible Euler equations such that up to a subsequence
\[
v_\nu \rightharpoonup v \ \ \text{in }\ \mathcal{D}' \quad \text{as }\ \nu \to 0.
\]
Moreover, $v$ is a Lipschitz solution to the Euler equation on $[0, T]$ with $v|_{t = 0} = v^0$.
\end{proposition}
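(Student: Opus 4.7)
The plan is to take $v$ as the weak-$\ast$ limit of the Leray solutions $v_\nu$ and to construct $R$ as the associated Reynolds defect measure. Classical local well-posedness for the incompressible Euler equation with $C^1$ divergence-free initial data provides a time $T > 0$ and a solution $\bar v \in C([0,T]; C^1(\T^n))$ with $\bar v|_{t=0} = v^0$; conservation of energy along $\bar v$ gives $\tfrac12\|\bar v(t)\|_{L^2}^2 = \IE$ for all $t \in [0,T]$. A standard energy estimate on $w_\nu := v_\nu - \bar v$, using that $\bar v$ solves Euler and $\nabla \bar v \in L^\infty$, combined with Leray's energy inequality and Gronwall, yields $v_\nu \to \bar v$ strongly in $L^\infty([0,T]; L^2(\T^n))$ as $\nu \to 0$.

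Leray's inequality bounds $\{v_\nu\}$ in $L^\infty(\R_+; L^2(\T^n))$; hence along a subsequence $v_\nu \rightharpoonup^\ast v$ in this space, with $v = \bar v$ on $[0,T]$. Since $\{v_\nu \otimes v_\nu\}$ is bounded in $L^\infty(\R_+; L^1(\T^n; \sym))$, a further subsequence yields $v_\nu \otimes v_\nu \rightharpoonup^\ast \mu$ in $L^\infty_{w^\ast}(\R_+; \mathcal{M}(\T^n; \sym))$, and we set $R := \mu - v \otimes v$. The decomposition
\[
v_\nu \otimes v_\nu = (v_\nu - v)\otimes(v_\nu - v) + v_\nu \otimes v + v \otimes v_\nu - v \otimes v,
\]
together with the weak convergence of the last three terms to $v \otimes v$, shows that $R$ is the weak-$\ast$ limit of the pointwise positive semidefinite tensors $(v_\nu - v)\otimes(v_\nu - v)$. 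Hence $R \in L^\infty_{w^\ast}(\R_+; \mathcal{M}^+(\T^n; \sym))$, and the strong convergence on $[0,T]$ forces $R \equiv 0$ there.

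For condition \ref{es pressure}, rewrite Navier--Stokes as $\partial_t v_\nu + \Div(v_\nu \otimes v_\nu) + \nabla p_\nu = \nu \Delta v_\nu$ with $\Div v_\nu = 0$. Leray's inequality makes $\sqrt\nu\,\nabla v_\nu$ uniformly $L^2$, so $\nu\Delta v_\nu \to 0$ in $\mathcal{D}'$; passing to the limit yields $\partial_t v + \Div(v \otimes v + R) + \nabla p = 0$ and $\Div v = 0$ in $\mathcal{D}'$, where $p$ is recovered as the distributional solution of $-\Delta p = \Div\Div(v \otimes v + R)$. Condition \ref{es saturate} is immediate on $[0,T]$ from $R = 0$ and classical energy conservation for $\bar v$. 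For \ref{es ineq}, Leray's pointwise-in-time inequality $\tfrac12\|v_\nu(t)\|_{L^2}^2 \le \IE$ combined with the identity $\int_{\T^n} d(\trace \mu)(t) = \lim_\nu \int_{\T^n} |v_\nu(t)|^2 \,dx$ (valid for almost every $t$ along a diagonal subsequence) yields $\tfrac12 \int_{\T^n}(|v|^2 + \trace R) \,dx \le \IE$ for a.e.\ $t > T$.

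The main obstacle is the careful handling of the defect measure $R$ on time slices: namely, verifying that $|v_\nu|^2 \rightharpoonup^\ast |v|^2 + \trace R$ on almost every slice $\{t\} \times \T^n$ so that the pointwise-in-time Leray inequality transfers to \ref{es ineq}, and that $R$ is genuinely a matrix-valued measure taking values in the positive semidefinite cone rather than merely a distribution. Both points follow from standard Young-measure or lower semicontinuity arguments once the $L^\infty_{w^\ast}$ framework is set up; the remainder is routine vanishing-viscosity analysis together with the classical well-posedness of Euler on $[0,T]$.
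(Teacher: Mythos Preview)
Your proposal is correct and follows essentially the same route as the paper: uniform energy bounds from Leray's inequality, extraction of a weak limit $(v,R)$ with $R\ge 0$, passage to the limit in the equation for \ref{es pressure}, and a weak--strong uniqueness (relative energy plus Gronwall) argument against the local $C^1$ Euler solution to obtain strong convergence on $[0,T]$, hence $R\equiv 0$ and \ref{es saturate}. The paper is more terse on the positivity of $R$ and on the time-slice passage for \ref{es ineq}, whereas you spell out the decomposition $(v_\nu-v)\otimes(v_\nu-v)$ and flag the slice issue explicitly; these are cosmetic differences, not a different strategy.
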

\begin{proof}
From classical energy inequality for the Navier--Stokes equations we know that for any $\nu > 0$,
\be\label{NS ineq}
\frac12 \int_{\T^n} |v_\nu|^2 \,dx + \nu \int^t_0 \int_{\T^n} |\nabla v_{\nu}|^2 \,dxds \le \IE.
\ee
Hence there exists
\[
(v, R) \in L^\infty(\R_+; L^2(\T^n)) \times L^\infty_{w^*}(\R^n; \mathcal{M}^+(\T^n; \sym))
\]
such that up to a subsequence, as $\nu \to 0$,
\be\label{wk limit incomp}
v_\nu \rightharpoonup v, \quad v_\nu \otimes v_\nu \rightharpoonup v\otimes v + R \qquad \text{in }\ \mathcal{D}'.
\ee
Therefore passing to this limit in the Navier--Stokes equation gives \ref{es pressure}. It also implies that
\begin{equation*}
\frac12 \trace (v_\nu \otimes v_\nu) = \frac12 |v_\nu|^2 \rightharpoonup \frac12 \LC |v|^2 + \trace R \RC \quad \text{in }\ \mathcal{D}',
\end{equation*}
which, together with \eqref{NS ineq}, yields \ref{es ineq}.

Finally the local energy equality \ref{es saturate} follows from a classical weak-strong uniqueness argument. Recall that $v^0 \in C^1$, and hence there exists some $T>0$  and a Lipschitz solution $\underline{v}$ of the Euler equation on $[0, T]$. For any $\nu > 0$ and $t \le T$, following \cite{dafermos2005hyperbolic} we have
\begin{align*}
& \|v_\nu(t,\cdot) - \underline{u}(t, \cdot) \|^2_{L^2(\T^n)} + \nu \int^t_0 \int_{\T^n} |\nabla v_{\nu}(s,x)|^2 \,dxds \\
\le & \|\nabla \underline{v}\|_{L^\infty} \int^t_0 \int_{\T^n} |v_\nu - \underline{v}|^2 \,dxds + \nu \|\nabla \underline{v} \|_{L^2} \|\nabla v_\nu \|_{L^2} \\
\le & \|\nabla \underline{v}\|_{L^\infty} \int^t_0 \int_{\T^n} |v_\nu - \underline{v}|^2 \,dxds + \frac{\nu}{2} \int^t_0 \int_{\T^n} |\nabla v_{\nu}(s,x)|^2 \,dxds + 2\nu T \|\nabla \underline{v}\|_{L^\infty}.
\end{align*}
Thus using Gronwall and that $\underline{v}|_{t = 0} = u_\nu |_{t = 0}$ we have
\begin{equation*}
 \|v_\nu(t,\cdot) - \underline{u}(t, \cdot) \|^2_{L^2(\T^n)} \le 2\nu T \|\nabla \underline{v}\|_{L^\infty} e^{T \|\nabla \underline{v}\|_{L^\infty}} \to 0 \quad \text{as } \ \nu\to 0.
\end{equation*}
This together with \eqref{wk limit incomp} implies that 
\[
v = \underline{v} \quad \text{for }\ t \in [0, T]. 
\]
Therefore on $[0,T]$
\[
R \equiv 0 \quad \text{and} \quad \frac12 \int_{\T^n} |v|^2 \,dx = \IE,
\]
which gives \ref{es saturate}.
\end{proof}

\subsection{Convex integration}\label{subsec ci incomp}

In this subsection we explain how one applies the convex integration framework to produce from an energy compatible subsolution $(v, R)$ infinitely many weak Euler solutions emanating from an initial value which is, up to a defect energy, arbitrarily close to the initial value $v^0$ of the subsolution. 

Note that in order to apply our convex integration machinery, we need the Reynolds stress tensor $R$ to be positive definite. This would prohibit us from convex integrating the subsolution from the initial time directly since there is an energy `bump-up' at the initial time coming from $R$, as can be seen in \eqref{ci energy density}. Therefore the idea is to first convex integrate $(v, R)$ from its initial value for a short time period $[0, t_0]$, which generates a new `initial value' at some $\widetilde t \in (0, t_0)$ carrying the full energy. Then convex integrate on $[t_0, +\infty)$, and finally stick together the two pieces. 
\begin{theorem}\label{thm ci incomp}
Assume that $(v, R)$ is a smooth $(\IE, T)$-energy compatible subsolution of the incompressible Euler equations with initial value $(v^0, R^0)$ such that $v^0$ is divergence-free and $R(x, t) > 0$ is positive definite for every $(x,t) \in \T^n \times \R_+$. Then for any $\varepsilon > 0$, there exist infinitely many divergence-free initial values $\widetilde v^0_\varepsilon \in L^2(\T^n)$ such that 
\be\label{ci data cond1}
\frac12 \int_{\T^n} |\widetilde v^0_\varepsilon(x)|^2 \,dx = \IE \quad \text{and} \quad \int_{\T^n} |\widetilde v^0_\varepsilon - v^0|^2 \,dx < \varepsilon + \int_{\T^n} \textup{tr} R^0 \,dx,
\ee
and for each of such initial values there exist infinitely many $\widetilde v_\varepsilon \in L^\infty(\R_+; L^2(\T^n))$ which are global weak solutions to the incompressible Euler equations with $\widetilde v_\varepsilon |_{t = 0} = \widetilde v^0_\varepsilon$ and 
\begin{equation}\label{ci data cond2}
\frac12 \int_{\T^n} |\widetilde v_\varepsilon(t,x)|^2 \,dx \le \frac12 \int_{\T^n} |\widetilde v^0_\varepsilon(x)|^2 \,dx = \IE \quad \text{a.e. }\ t > 0.
\end{equation}
\end{theorem}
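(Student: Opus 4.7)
The plan is to carry out a double convex integration, as sketched at the end of the introduction. I instantiate Lemma \ref{lem conv int} in the incompressible setting by taking $\rho_0 \equiv 1$, $V_0 = v$, $U_0 = v\otimes v - \tfrac{|v|^2}{n}\id$, and $R_0 = R$; the hypotheses $\rho_0 > 0$ and $R_0 > 0$ hold globally on $\T^n\times\R_+$ by assumption, and the algebra of Proposition \ref{prop reduction} (with the constant $p(1)$ re-absorbed into the incompressible pressure when testing against divergence-free fields) turns the resulting perturbations into weak solutions of \eqref{incomp euler}. I first apply the lemma on $P_1 := \T^n\times(0,T)$, and subsequently on $P_2 := \T^n\times(T,\infty)$ (applied iteratively on bounded sub-windows and concatenated), producing infinitely many $(\tv_1,\tu_1)$ and $(\tv_2,\tu_2)$ compactly supported in those sets.

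Setting $u^{(1)} := v + \tv_1$ on $[0,T]$ and $u^{(2)} := v + \tv_2$ on $[T,\infty)$, both are weak solutions of \eqref{incomp euler} with $u^{(1)}(0) = v^0$ (because $\tv_1$ vanishes at $t=0$) and $u^{(1)}(T) = u^{(2)}(T) = v(T)$. Remark \ref{rk energy density} gives the pointwise identity $|u^{(i)}|^2 = |v|^2 + \trace R$ a.e., so \ref{es saturate} and \ref{es ineq} translate into $\tfrac12\|u^{(1)}(t)\|_{L^2}^2 = \IE$ for every $t \in (0,T]$ and $\tfrac12\|u^{(2)}(t)\|_{L^2}^2 \le \IE$ for every $t \ge T$. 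For a small $s \in (0,T)$ I then time-shift and glue
\[
\widetilde v_\varepsilon(t,x) := \begin{cases} u^{(1)}(s+t,x), & 0 \le t \le T-s, \\ u^{(2)}(s+t,x), & t \ge T-s, \end{cases}\qquad \widetilde v^0_\varepsilon := u^{(1)}(s,\cdot).
\]
Since weak Euler solutions lie in $C^0(\R_+;L^2(\T^n)\text{-weak})$ and both pieces attain the common value $v(T)$ at $t = T-s$, splitting the test integral at $T-s$ in the weak formulation shows $\widetilde v_\varepsilon$ is a global weak solution, and the energy bounds yield $\tfrac12\|\widetilde v^0_\varepsilon\|_{L^2}^2 = \IE$ together with \eqref{weak energy ineq incomp}, which is \eqref{ci data cond2}.

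For the distance estimate \eqref{ci data cond1}, using $\|u^{(1)}(s)\|_{L^2}^2 = 2\IE$ together with $\|v^0\|_{L^2}^2 = 2\IE - \int_{\T^n}\trace R^0\,dx$ I expand
\[
\|\widetilde v^0_\varepsilon - v^0\|_{L^2}^2 = 4\IE - \int_{\T^n}\trace R^0\,dx - 2\langle u^{(1)}(s),v^0\rangle.
\]
The weak-in-time continuity of $u^{(1)}$ at $t = 0$ forces $u^{(1)}(s) \rightharpoonup v^0$ in $L^2$ as $s \to 0^+$, hence $\langle u^{(1)}(s),v^0\rangle \to \|v^0\|_{L^2}^2$; the right-hand side therefore tends to $\int_{\T^n}\trace R^0\,dx$, and any sufficiently small $s$ secures \eqref{ci data cond1}. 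Multiplicity of the wild initial data comes from the infinite family of $\tv_1$ yielded by Lemma \ref{lem conv int} (by Fubini, distinct $\tv_1$ give distinct slices $u^{(1)}(s,\cdot)$ for a.e. $s$), and for each such datum the infinite family of $\tv_2$ provides infinitely many continuations. The main technical point I anticipate is checking that the concatenated, time-shifted object is a bona fide global distributional solution; this rests on the weak-in-time continuity of Euler solutions and the exact matching of the pieces to $v$ at every gluing time, rather than on any new convex-integration ingredient.
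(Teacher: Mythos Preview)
Your proof follows the paper's double convex-integration strategy almost exactly: convex integrate on a first interval, time-shift to obtain wild initial data with the right energy, convex integrate on the complementary interval for the continuations, and glue at the common subsolution value. The distance estimate you derive and the gluing argument are the same as the paper's; the paper simply chooses a small first interval $(0,t_0)$ instead of your $(0,T)$, which is cosmetic.

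Two points need tightening. First, your multiplicity argument is not correct as written: Fubini only tells you that two distinct $\tv_1$'s give distinct slices on a \emph{set of positive measure} of $s$, not for a.e.\ $s$, so it does not directly yield a single $s$ at which infinitely many $\tv_1$'s produce distinct initial data. The paper instead argues by contradiction: if the collection of slices $\{\hat v(\tilde t)\}$ were finite, then since each $\hat v$ is weakly continuous in time and takes values (a.e.) in that finite set, it would have to be constant, contradicting the infinite family produced by Lemma~\ref{lem conv int}. A variant that also works: any such finite set would lie on the sphere $\|\cdot\|_{L^2}^2=2\IE$, while $\hat v(0)=v^0$ has strictly smaller norm, contradicting that a weakly continuous image of a connected interval is connected. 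Second, your weak-convergence argument $\langle u^{(1)}(s),v^0\rangle\to\|v^0\|_{L^2}^2$ is correct for a fixed $\tv_1$, but since you then want to use infinitely many $\tv_1$'s at the same $s$, you need the rate to be uniform in $\tv_1$. The paper makes this explicit by testing the Euler equation against $v^0$ and using the uniform energy bound $\|\hat v(t)\|_{L^2}^2=2\IE$, obtaining $|\langle\hat v(t),v^0\rangle-\|v^0\|_{L^2}^2|\le 2t\|\nabla v^0\|_{L^\infty}\IE$; you should invoke the same computation, since the pointwise identity $|u^{(1)}|^2=|v|^2+\trace R$ you already use gives exactly the uniform bound needed.
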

\begin{proof}
Since $(v, R)$ is smooth, so for any $\varepsilon > 0$ there exists some $t_0 < \frac{T}{2}$ such that $\forall \ t < t_0$,
\begin{align}
t_0 \|\nabla v^0\|_{L^\infty} \IE & < \frac{\varepsilon}{4}. \label{cond ci 1} 
\end{align}
The positivity of $R$ ensures that we can apply our convex integration result Proposition \ref{prop reduction} (with density being constant) on $[0, t_0]$. This provides infinitely many weak Euler solutions $\hat v$ on $[0, t_0]$ with
\[
\hat v |_{t = 0} = v^0, \quad \hat v(t_0, \cdot) = v(t_0, \cdot),
\]
and by \eqref{ci energy density},
\be\label{e-eq}
\frac12 \int_{\T^n} |\hat v(t,x)|^2 \,dx = \IE, \quad \text{a.e. }\ t \in [0, t_0].
\ee

Denote the set $\mathcal T \subset [0, t_0]$ to be the set of times such that the above equality holds. Then $\mathcal T$ depends on the solution $\hat v$, $0, t_0 \not\in \mathcal T$, and the measure $\mathcal{L}([0, t_0] \backslash \mathcal {T}) = 0$.

If there are only a finite number of $L^2$ functions $\{ v_1, \ldots, v_N\}$ such that for all the weak solutions $\hat v$ constructed above with the associated $\mathcal T$, $\hat v(t) \in \{ v_1(t), \ldots, v_N(t)\}$. 
Then by the weak continuity in time, we indeed have that 
\[
\hat v \equiv v_j \quad \text{on }\ [0, t_0/2] \ \text{for some } \ j \in \{1, \ldots, N \}.
\]
But this would in turn imply that we can only construct $N$ weak Euler solutions, which is a contradiction with the fact that our convex integration scheme can produce infinitely many weak solutions.

The above discussion allows us to choose infinitely many $\widetilde t \in \mathcal T$ to define infinitely many initial data 
\[
\widetilde v^0_\varepsilon := \hat v(\widetilde t, \cdot).
\]
Hence from the definition of $\mathcal T$ we see that $\frac12 \int_{\T^n} |\widetilde v^0_\varepsilon(x)|^2 \,dx = \IE$. Moreover from \ref{es saturate},
\begin{equation*}
\begin{split}
\frac12 \|\widetilde v^0_\varepsilon - v^0\|^2_{L^2} & = \frac12 \|\widetilde v^0_\varepsilon\|^2_{L^2} + \frac12\|v^0\|^2_{L^2} - \int_{\T^n} \widetilde v^0_\varepsilon \cdot v^0 \,dx \\
& = \IE + \LC \IE - \frac12\int_{\T^n} \trace R^0 \,dx \RC - \int_{\T^n} \widetilde v^0_\varepsilon \cdot v^0 \,dx
\end{split}
\end{equation*}
Since $v^0$ is divergence-free, by \eqref{cond ci 1} and \eqref{e-eq},
\begin{equation*}
\begin{split}
\LV \int_{\T^n} \hat v(t,x) \cdot v^0(x) \,dx - \|v^0\|^2_{L^2} \RV 
& \le \LV \int^t_0 \int_{\T^n} \partial_t \hat v \cdot v^0 \,dxds \RV \\
& \le \LV \int^t_0 \int_{\T^n} \nabla v^0 : (\hat v \otimes \hat v) \,dxds \RV \\
& \le 2t \|\nabla v^0\|_{L^\infty} \IE < \frac{\varepsilon}{2}.
\end{split}
\end{equation*}
Therefore from \ref{es saturate} it follows that
\begin{equation*}
\begin{split}
\frac12 \|\widetilde v^0_\varepsilon - v^0\|^2_{L^2} & = \frac12 \|\widetilde v^0_\varepsilon\|^2_{L^2} + \frac12\|v^0\|^2_{L^2} - \int_{\T^n} \widetilde v^0_\varepsilon \cdot v^0 \,dx \\
& = \frac12 \|\widetilde v^0_\varepsilon\|^2_{L^2} - \frac12\|v^0\|^2_{L^2} + \|v^0\|^2_{L^2} - \int_{\T^n} \widetilde v^0_\varepsilon \cdot v^0 \,dx \\
& < \IE - \LC \IE - \frac12\int_{\T^n} \trace R^0 \,dx \RC + \frac{\varepsilon}{2} \\
& = \frac12\int_{\T^n} \trace R^0 \,dx + \frac{\varepsilon}{2}.
\end{split}
\end{equation*}
This proves \eqref{ci data cond1}.

Now for each of such (infinitely many) initial values $\widetilde v^0_\varepsilon$ we define $\widetilde v_\epsilon$ on $\R_+ \times \T^n$ as follows:
\begin{equation*}
\widetilde v_\varepsilon(t, \cdot) = \left\{\begin{array}{ll}
\hat v(t + \widetilde t, \cdot), \quad & \ \text{ for } \ t \le t_0 - \widetilde t, \\
\breve v(t + \widetilde t, \cdot),  & \ \text{ for }\ t \ge t_0 - \widetilde t,
\end{array}\right.
\end{equation*}
where $\breve v$ is any weak Euler solution on $[t_0, +\infty)$ constructed by convex integrating the original energy compatible subsolution $(v, R)$ on $[t_0, +\infty)$. Thus we know that
\[
\hat v(t_0) = v(t_0) = \breve{v}(t_0). 
\]
This way we know that $\widetilde v_\varepsilon$ is a weak solution to the incompressible Euler equations on $\R_+ \times \T^n$. By construction we have $\widetilde v_\epsilon|_{t = 0} = \widetilde v^0_\varepsilon$, and  $\widetilde v_\epsilon$ satisfies \eqref{ci data cond2}.
\end{proof}

The above construction can be illustrated in the following diagram.
\begin{figure}[h]
  \includegraphics[page=1,scale=0.85]{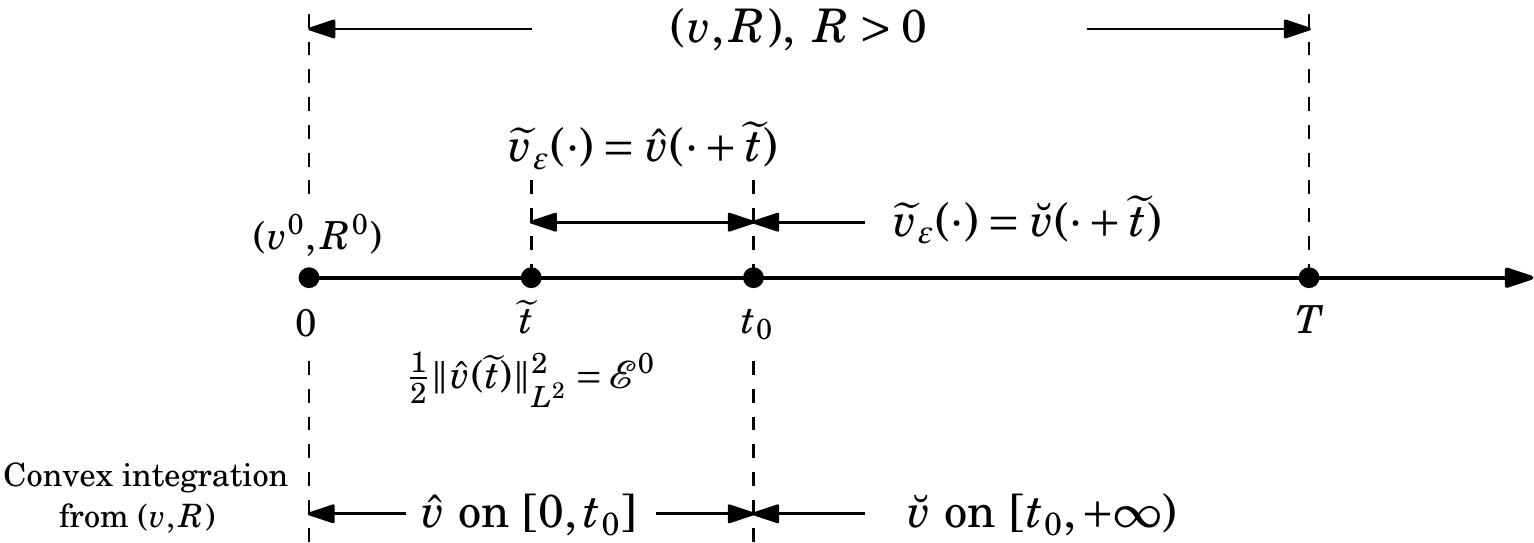}
  \caption{Double convex integration of an $(\IE, T)$-energy compatible subsolution with $R > 0$.}
  \label{fig convint}
\end{figure}

\subsection{Construction of smooth energy compatible strict subsolutions}\label{subsec strict subsoln}

From Theorem \ref{thm ci incomp} we see that in order to construct infinitely many weak solutions to the incompressible Euler equation \eqref{incomp euler system} with initial data being a small perturbation of a given $L^2$ function, it suffices to find an energy compatible subsolution $(v, R)$ in the sense of Definition \ref{defn e-subsoln incomp} which further satisfies that $R > 0$ is positive definite. We call such a $(v, R)$ an energy compatible {\it strict} subsolution.

In this subsection we introduce a convex combination formalism to produce such energy compatible strict subsolutions.

Let $(\Omega, \mu)$ be a probability space, that is, $\mu$ is a nonnegative measure on $\Omega$ such that $\mu(\Omega) = 1$.
\begin{lemma}\label{lem conv combo}
Fix $\IE, T > 0$. Let $(v, R)$ be a measurable function from $\R_+ \times \T^n \times \Omega$ to $\R^n \times \R^{n \times n}$ such that for a.e. $\omega \in \Omega$, $(v_\omega, R_\omega) := \LC v(\cdot, \omega), R(\cdot, \omega) \RC$ is an $(\IE, T)$-energy compatible subsolution to the incompressible Euler equations. Denote
\be\label{conv combo}
\overline v := \mathbb{E} (v_\omega), \qquad \overline R := \mathbb E(R_\omega) + \mathbb E(v_\omega \otimes v_\omega) - \overline v \otimes \overline v,
\ee
where 
\[
\mathbb E(f_\omega) := \int_\Omega f_\omega d\mu(\omega).
\]
Then $(\overline v, \overline R)$ is also an $(\IE, T)$-energy compatible subsolution.
\end{lemma}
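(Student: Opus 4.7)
The plan is to verify the three defining conditions \ref{es pressure}, \ref{es saturate}, and \ref{es ineq} for $(\overline v, \overline R)$ by averaging over $\omega$ the corresponding relations satisfied by $(v_\omega, R_\omega)$. The starting observation, obtained by expanding the tensor product, is that
\[
\overline R = \mathbb{E}(R_\omega) + \mathbb{E}\bigl((v_\omega - \overline v)\otimes(v_\omega - \overline v)\bigr),
\]
which rewrites \eqref{conv combo} as a sum of two pointwise positive semidefinite symmetric-matrix-valued contributions. Hence $\overline R$ indeed takes values in $\mathcal{M}^+(\T^n; \sym)$. The uniform bound $\frac12 \int_{\T^n}(|v_\omega|^2 + \trace R_\omega)\,dx \le \IE$, valid for a.e. $\omega$ and a.e. $t$, furnishes uniform $L^\infty_t L^2_x$ and $L^\infty_t \mathcal{M}_x$ control of $v_\omega$ and $R_\omega$, supplying the integrability in $\omega$ needed for the exchanges below.

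For \ref{es pressure}, every realization $(v_\omega, R_\omega)$ admits a distributional pressure $p_\omega$ satisfying \eqref{eqn subsoln incomp}. Normalizing $p_\omega$ to have zero spatial mean determines it uniquely from $(v_\omega, R_\omega)$ via the Poisson relation $-\Delta p_\omega = \Div \Div (v_\omega \otimes v_\omega + R_\omega)$, which in turn transfers the $\omega$-measurability from the data to $p_\omega$. Pairing the equation for $(v_\omega, R_\omega, p_\omega)$ against a smooth compactly supported test vector field and integrating against $d\mu(\omega)$, Fubini produces the same equation with $v_\omega$, $R_\omega$, $p_\omega$ replaced respectively by $\overline v$, $\mathbb{E}(v_\omega \otimes v_\omega) + \mathbb{E}(R_\omega)$, and $\overline p := \mathbb{E}(p_\omega)$. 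Substituting the algebraic identity
\[
\mathbb{E}(v_\omega \otimes v_\omega) + \mathbb{E}(R_\omega) = \overline v \otimes \overline v + \overline R,
\]
which is a direct rearrangement of \eqref{conv combo}, recovers the desired subsolution equation \eqref{eqn subsoln incomp} for $(\overline v, \overline R, \overline p)$; the incompressibility $\Div \overline v = 0$ follows by linearity from $\Div v_\omega = 0$.

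For \ref{es saturate} and \ref{es ineq}, taking the trace of the preceding identity at each $(t,x)$ yields the pointwise scalar relation
\[
|\overline v|^2 + \trace \overline R = \mathbb{E}\bigl(|v_\omega|^2 + \trace R_\omega\bigr).
\]
Integrating in $x$ on $\T^n$ and swapping the spatial integral with the expectation via Fubini, the saturation $\frac12 \int_{\T^n}(|v_\omega|^2 + \trace R_\omega)\,dx = \IE$ holding at a.e. $t \in [0,T]$ for a.e. $\omega$ transforms into the saturation $\frac12 \int_{\T^n}(|\overline v|^2 + \trace \overline R)\,dx = \IE$ at a.e. $t \in [0,T]$, and the same step converts the inequality \ref{es ineq} on $[T,\infty)$. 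The only, mild, obstacle is justifying the Fubini swaps and the exchange of expectation with distributional derivatives; this is handled by the joint measurability of $(v, R)$ in $(t, x, \omega)$ together with the uniform $\omega$-bounds recorded in the first paragraph.
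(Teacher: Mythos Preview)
Your proof is correct and follows essentially the same approach as the paper: take expectations of \eqref{eqn subsoln incomp} with $\overline p := \mathbb{E}(p_\omega)$ to get \ref{es pressure}, and use the algebraic identity $\mathbb{E}(v_\omega\otimes v_\omega + R_\omega) = \overline v\otimes\overline v + \overline R$ (trace it and integrate) to transfer \ref{es saturate} and \ref{es ineq}. You supply additional detail the paper omits---the explicit positive-semidefiniteness of $\overline R$, the normalization and measurability of $p_\omega$, and the Fubini justification---but the argument is the same.
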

\begin{remark}
This lemma says that the set of energy compatible subsolutions is closed under convex combination (discrete or continuous), and \eqref{conv combo} gives the explicit formula for the new `Reynolds tensor'.
\end{remark}
\begin{proof}[Proof of Lemma \ref{lem conv combo}]
Taking the expectation of \eqref{eqn subsoln incomp} and using the definition $\eqref{conv combo}$ it follows that $(\overline v, \overline R)$ and $\overline p := \mathbb E(p_\omega)$ satisfy \ref{es pressure}.

Note that for a.e. $t \in [0, T]$, by \eqref{conv combo}
\begin{equation*}
\begin{split}
\mathbb E\LC \frac12(|v_\omega|^2 + \trace R_\omega) \RC & = \frac12 \trace \mathbb E\LC v_\omega \otimes v_\omega + R_\omega \RC \\
& = \frac12 \trace \LC \overline v \otimes \overline v + \overline R \RC = \frac12(|\overline v|^2 + \trace {\overline R}).
\end{split}
\end{equation*}
Taking the expectation of \ref{es saturate} and \ref{es ineq} for $(v_\omega, R_\omega)$ proves \ref{es saturate} and \ref{es ineq} for $(\overline v, \overline R)$.
\end{proof}

Now we are ready to prove the main result of this subsection.
\begin{theorem}\label{thm smoothing and conv combo}
Given $\IE, T > 0$, let $(v, R)$ be an $(\IE, T)$-energy compatible subsolution to the incompressible Euler equations with initial data $v^0$ satisfying 
\[
\frac12 \int_{\T^n} |v^0|^2 \,dx = \IE. 
\]
Then for any $\varepsilon > 0$ there exists a smooth $(\IE, \frac{T}{2})$-energy compatible subsolution $(\widetilde v, \widetilde R)$ with 
\be\label{strict subsoln}
\widetilde R > 0, \qquad \frac12 \int_{\T^n} \LC |\widetilde v^0 - v^0|^2 + \textup{tr} \widetilde R^0 \RC \,dx < \varepsilon.
\ee 
\end{theorem}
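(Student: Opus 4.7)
The plan is to apply Lemma~\ref{lem conv combo} twice in succession: first with a discrete two-point probability space to enforce strict positivity of $R$, and then with a continuous probability space modeled on a \emph{forward-shifted} space-time mollifier to enforce smoothness. The forward time shift is the key device that avoids having to extend $(v,R)$ backward in time to $t<0$, which would require solving an equation backward.

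\textbf{Step 1 (strict positivity).} The constant pair $(0,\tfrac{2\IE}{n}\id)$ together with pressure $p\equiv 0$ is trivially an $(\IE,T)$-energy compatible subsolution. Fix $\theta\in(0,1)$, to be taken close to $1$ at the end. Applying Lemma~\ref{lem conv combo} to the two-point probability space with weights $\theta,1-\theta$ assigned to $(v,R)$ and $(0,\tfrac{2\IE}{n}\id)$ respectively yields an $(\IE,T)$-subsolution
\begin{equation*}
v':=\theta v,\qquad R':=\theta R+(1-\theta)\tfrac{2\IE}{n}\id+\theta(1-\theta)\,v\otimes v,
\end{equation*}
with the uniform lower bound $R'\ge(1-\theta)\tfrac{2\IE}{n}\id$.

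\textbf{Step 2 (smoothing).} Fix $\delta\in(0,T/4)$ and pick a non-negative $\eta_\delta\in C_c^\infty(\T^n\times\R)$ of unit mass supported in $\{|y|<\delta\}\times(0,2\delta)$. For each $(y,s)$ in this support, the translated-and-shifted pair
\begin{equation*}
(v'_{(y,s)},R'_{(y,s)})(x,t):=\bigl(v'(x-y,t+s),R'(x-y,t+s)\bigr)
\end{equation*}
is a well-defined Euler--Reynolds subsolution on $[0,\infty)\times\T^n$ because $s>0$ forces $t+s\ge 0$ for all $t\ge 0$. Moreover, since $s<2\delta$, the saturation of $(v',R')$ on $[0,T]$ transfers to the saturation of each $(v'_{(y,s)},R'_{(y,s)})$ on $[0,T-2\delta]$, making each an $(\IE,T-2\delta)$-subsolution. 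Applying Lemma~\ref{lem conv combo} to this continuous family with the probability measure $\eta_\delta(y,s)\,dy\,ds$ produces an $(\IE,T-2\delta)$-subsolution
\begin{equation*}
\widetilde v:=\mathbb{E}\bigl[v'_{(y,s)}\bigr],\qquad \widetilde R:=\mathbb{E}\bigl[R'_{(y,s)}\bigr]+\mathbb{E}\bigl[v'_{(y,s)}\otimes v'_{(y,s)}\bigr]-\widetilde v\otimes\widetilde v.
\end{equation*}
Smoothness of $(\widetilde v,\widetilde R)$ follows from $\eta_\delta\in C_c^\infty$; the matrix Jensen inequality $\mathbb{E}[v'_{(y,s)}\otimes v'_{(y,s)}]\ge\widetilde v\otimes\widetilde v$ combined with $\mathbb{E}[R'_{(y,s)}]\ge(1-\theta)\tfrac{2\IE}{n}\id$ gives $\widetilde R>0$ throughout; and $T-2\delta\ge T/2$ upgrades the result to $(\IE,T/2)$-compatibility.

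\textbf{Step 3 (initial closeness).} Smoothness upgrades the a.e.\ saturation at $t=0$ to a pointwise identity, so $\tfrac12\int_{\T^n}(|\widetilde v^0|^2+\trace\widetilde R^0)\,dx=\IE$. Combined with the assumption $\tfrac12\int|v^0|^2\,dx=\IE$, a direct expansion gives
\begin{equation*}
\tfrac12\int_{\T^n}\bigl(|\widetilde v^0-v^0|^2+\trace\widetilde R^0\bigr)\,dx = 2\IE-\int_{\T^n}\widetilde v^0\cdot v^0\,dx.
\end{equation*}
Unwinding the definition of $\widetilde v^0$ and invoking the weak $L^2$-continuity of $v'$ at $t=0$ (standard for Euler--Reynolds subsolutions in $L^\infty_tL^2_x$) shows $\int_{\T^n}\widetilde v^0\cdot v^0\,dx\to\int_{\T^n}v'^0\cdot v^0\,dx=\theta\int_{\T^n}|v^0|^2\,dx=2\theta\IE$ as $\delta\to 0$. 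Hence the left-hand side tends to $2(1-\theta)\IE$, which is made $<\varepsilon$ by first selecting $\theta$ close enough to $1$ and then $\delta$ sufficiently small. The most technically delicate point is the weak $L^2$-continuity at $t=0$, which follows from the distributional Euler--Reynolds equation applied to suitable test functions together with the $L^\infty_tL^2_x$ bound on the velocity.
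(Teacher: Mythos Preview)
Your proof is correct and follows essentially the same strategy as the paper: two applications of Lemma~\ref{lem conv combo}, one with an atomic two-point measure to gain the uniform lower bound $(1-\theta)\tfrac{2\IE}{n}\id$ on the Reynolds stress, and one with a forward-in-time space-time mollifier to gain smoothness. The only notable difference is that the paper performs these in the opposite order (mollify first, then convex combine with the trivial subsolution), and it establishes \emph{strong} $L^2$-continuity of $v$ at $t=0$ together with $\int_{\T^n}\trace R\,dx\to 0$ before carrying out the mollification step; your argument instead exploits the energy saturation identity at $t=0$ to reduce the initial closeness estimate to the single cross term $\int_{\T^n}\widetilde v^0\cdot v^0\,dx$, for which weak $L^2$-continuity of $v'$ in time (plus strong continuity of spatial translations) suffices.
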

\begin{remark}
Note that from the hypothesis we know that $R^0 \equiv 0$. This is verified by the subsolutions constructed from the vanishing viscosity limit, cf. Proposition \ref{prop e-subsoln incomp}. 
\end{remark}
\begin{proof}
First we know that $(\IE, T)$-energy compatible subsolutions $(v, R)$ with $\frac12 \int_{\T^n} |v^0|^2 \,dx = \IE$ are strongly continuous in $L^2$ at $t = 0$, that is, $\displaystyle \lim_{t\to 0} \|v(t) - v^0\|_{L^2} = 0$. 

Indeed, for any $\varepsilon > 0$, there is a smooth divergence-free function $v^0_\varepsilon \in C^\infty(\T^n)$ such that
\[
\|v^0 - v^0_\varepsilon\|^2_{L^2} < \frac{\varepsilon}{3}\IE,
\]
together with some constant $C_\varepsilon > 0$ such that
\[
\LV \frac{d}{dt} \int_{\T^n} v^0_\varepsilon(x) \cdot v(t,x) \,dx \RV \le \LV \int_{\T^n} \nabla v^0_\varepsilon : (v \otimes v + R) \,dx \RV \le C_\varepsilon.
\]
From this it follows that there exists some $t_0 > 0$ such that $C_\varepsilon t_0 < \varepsilon/3$. Hence for a.e. $t \in [0, t_0]$,
\begin{equation*}
\begin{split}
\frac12\|v(t, x) - v^0(x)\|^2_{L^2} & = \frac12\|v\|^2_{L^2} + \frac12\|v^0\|^2_{L^2} - \int_{\T^n} (v^0 - v^0_\varepsilon) \cdot v \,dx - \int_{\T^n} v^0_\varepsilon \cdot v \,dx \\
& < 2 \IE + \frac{\varepsilon}{3} - \int_{\T^n} v^0_\varepsilon \cdot v^0\,dx + \int^{t_0}_0 \frac{d}{ds}\int_{\T^n} v^0_\varepsilon \cdot v \,dx ds \\
& < \frac{\varepsilon}{3} + \int_{\T^n} (v^0_\varepsilon - v^0)\cdot v^0 \,dx + C_\varepsilon t_0  < \varepsilon.
\end{split}
\end{equation*}

Moreover for a.e. $t \ge 0$, recall from the definition of $(\IE, T)$-energy compatible subsolution that
\[
\frac12 \int_{\T^n} (|v|^2 + \trace R) \,dx \le \IE,
\]
from which we have
\[
\frac12 \int_{\T^n} \trace R\,dx \le \IE - \frac12\int_{\T^n} |v|^2 \,dx \to 0 \quad \text{as }\ t \to 0.
\]
Thus $\int_{\T^n} \trace R\,dx$ is continuous in time at $t = 0$ and 
\[
\int_{\T^n} \trace R^0 \,dx = 0.
\]

Introducing a standard mollifier $\varphi \in C^\infty(\R \times \T^n)$ with support in $[0, 1] \times \T^n$, $\varphi \ge 0$ and $\int \varphi = 1$. For $\alpha > 0$ we define 
\[
\varphi_\alpha(t, x) := \frac{1}{\alpha^{n+1}} \varphi\LC \frac{t}{\alpha}, \frac{x}{\alpha} \RC.
\]
Therefore we can find some $0 < \alpha < T/2$ small enough such that $\varphi_\alpha \ast (v^0, R^0)$
satisfy
\[
\|\varphi_\alpha \ast v^0 - v^0\|^2_{L^2} < \frac{\varepsilon}{2}, \qquad \int_{\T^n} \trace (\varphi_\alpha \ast R^0)\,dx < \frac{\varepsilon}{2}.
\]
Then consider $\Omega := [0, T/2] \times \T^n$, $d\mu(s, y) := \varphi_\alpha(s, y) ds dy$, and for any $\omega := (s, y) \in \Omega$ with $s < T/2$,
\[
(w, \mathcal R)(\cdot, \cdot, s, y) := \LC v, R \RC (\cdot + s, \cdot + y)
\]
on $[0, T/2] \times \T^n$. Therefore $(w, \mathcal R)(\cdot, \cdot, s, y)$ is an $(\IE, \frac{T}{2})$-energy compatible subsolution.

Note that
\[
\mathbb E(w) = \varphi_\alpha \ast w =: v_\alpha, \qquad  \mathbb E(\mathcal R) = \varphi_\alpha \ast R.
\]
So, thanks to Lemma \ref{lem conv combo}, $(v_\alpha, R_\alpha)$ is still an $(\IE, \frac{T}{2})$-energy compatible subsolution with
\[
R_\alpha := \varphi_\alpha \ast R + \varphi_\alpha \ast (v  \otimes v) - v_\alpha \otimes v_\alpha.
\]
It is obvious that $(v_\alpha, R_\alpha)$ is smooth and by shrinking $\alpha$ if necessary we have
\be\label{ic close}
\|v^0_\alpha - v^0\|^2_{L^2} + \int_{\T^n} \trace R^0_\alpha \,dx < \varepsilon.
\ee

Now define
\[
\lambda := \min\LC \frac{\varepsilon}{6\IE}, \frac12 \RC \in (0, 1).
\]
Consider $\Omega := \{1, 2\}$, $\mu$ being atomic as $\mu := \lambda \delta_{\omega = 1} + (1-\lambda) \delta_{\omega=2}$, and 
\begin{align*}
(w, \mathcal R)(t, x, 1) := \LC 0, \frac{2\IE}{n |\T^n|} \id \RC, \qquad  (w, \mathcal R)(t, x, 2) := \LC v_\alpha, R_\alpha \RC.
\end{align*}

Note that $\LC 0, \frac{2\IE}{n |\T^n|} \id \RC$ is an $(\IE, T)$-energy compatible subsolution. So for any $\omega \in \Omega$, $(w, \mathcal R)(\cdot, \cdot, \omega)$ is an $(\IE, \frac{T}{2})$-energy compatible subsolution and 
\[
\mathbb E(w) = (1- \lambda) v_\alpha, \qquad \mathbb E(\mathcal R) = \frac{2\lambda \IE}{n |\T^n|} \id + (1 - \lambda) R_\alpha.
\]
Therefore from Lemma \ref{lem conv combo},
\[
(\widetilde v, \widetilde R) := \big( (1- \lambda) v_\alpha, \lambda(1 - \lambda) v_\alpha \otimes v_\alpha + \mathbb E(\mathcal R) \big)
\]
is an $(\IE, \frac{T}{2})$-energy compatible subsolution. Moreover $(\widetilde v, \widetilde R)$ is smooth and
\[
\widetilde R \ge \frac{2\lambda \IE}{n |\T^n|} \id > 0.
\]

Finally we check the initial value by recalling \eqref{ic close}  to obtain
\begin{align*}
\|\widetilde v^0 - v^0\|^2_{L^2} & = \|v^0_\alpha - v^0 - \lambda v^0_\alpha\|^2_{L^2} < \varepsilon, \\
\int_{\T^n} \trace \widetilde R^0 \,dx & = 2 \lambda \IE + (1 - \lambda) \int_{\T^n} \trace R^0_\alpha \,dx + \lambda (1 - \lambda) \IE \\
& \le 3 \lambda \IE + \frac{1 - \lambda}{2} \varepsilon < \varepsilon.
\end{align*}
Putting together we obtain \eqref{strict subsoln}.
\end{proof}

\subsection{Density of wild data for incompressible Euler equations}\label{subsec density incomp}

With all of the above preparation, we are now in a position to prove  Theorem \ref{thm main incomp}.
\begin{proof}
We will first mollify $u^0$ to a smooth $u^0_\varepsilon \in C^\infty(\T^n)$ such that 
\begin{equation}\label{incomp ic 1}
\|u^0_\varepsilon - u^0\|_{L^2(\T^n)}^2 < \frac{\varepsilon}{9}.
\end{equation}
Denote $\IE_\varepsilon := \frac12 \int_{\T^n} |u^0_\varepsilon|^2 \,dx$. Applying Proposition \ref{prop e-subsoln incomp} yields the existence of some $T > 0$ and an $(\IE_\varepsilon, T)$-energy compatible subsolution $(u_\varepsilon, R_\varepsilon)$ with $u_\varepsilon|_{t = 0} = u^0_\varepsilon$, and $R_\varepsilon \equiv 0$ for $t \in [0, T]$.

Next we can apply Theorem \ref{thm smoothing and conv combo} so that $(u_\varepsilon, R_\epsilon)$ is upgraded to a smooth $(\IE_\varepsilon, \frac{T}{2})$-energy compatible subsolution $(\widetilde u_\varepsilon, \widetilde R_\varepsilon)$ with initial data $(\widetilde u^0_\varepsilon, \widetilde R^0_\varepsilon)$ and satisfying the properties
\begin{equation}\label{incomp ic 2}
\widetilde R_\varepsilon > 0, \quad \text{and} \quad \|\widetilde u^0_\varepsilon - u^0_\varepsilon\|_{L^2(\T^n)}^2 + \int_{\T^n} \trace \widetilde R^0_\varepsilon \,dx < \frac{\varepsilon}{9}.
\end{equation}

Finally the positivity of $\widetilde R^0_\varepsilon$ allows us to use Theorem \ref{thm ci incomp}. This produces infinitely many $v^0 \in L^2(\T^n)$ each of which induces infinitely many weak solutions in the sense of Definition \ref{def wk soln incomp}. Moreover from \eqref{ci data cond1} we see that
\begin{equation}\label{incomp ic 3}
\|v^0 - \widetilde u^0_\varepsilon\|_{L^2(\T^n)}^2 < \frac{\varepsilon}{9} + \int_{\T^n} \trace \widetilde R^0_\varepsilon \,dx.
\end{equation}
Putting together \eqref{incomp ic 1}--\eqref{incomp ic 3} it follows that
\begin{equation*}
\begin{split}
\|v^0 - u^0\|_{L^2(\T^n)}^2 & \le 3 \LC \|v^0 - \widetilde u^0_\varepsilon\|_{L^2(\T^n)}^2 + \|\widetilde u^0_\varepsilon - u^0_\varepsilon\|_{L^2(\T^n)}^2 + \|u^0_\varepsilon - u^0\|_{L^2(\T^n)}^2 \RC \\
& < 3 \LC \frac{\varepsilon}{9} + \int_{\T^n} \trace \widetilde R^0_\varepsilon \,dx + \|\widetilde u^0_\varepsilon - u^0_\varepsilon\|_{L^2(\T^n)}^2 + \frac{\varepsilon}{9} \RC = \varepsilon, 
\end{split}
\end{equation*}
leading to \eqref{incomp ic}, and therefore the proof is completed. 
\end{proof}

\section{Application to compressible flows}\label{sec comp}

As a second application of our convex integration scheme, we consider the problem of constructing infinitely many admissible weak solutions of the compressible Euler equations \eqref{isen euler system} in the sense of Definition \ref{def wk soln comp}. The basic strategy is similar to the incompressible case. For a given data in the energy space, we first smooth it out, and build up an energy compatible subsolution via vanishing viscosity. Before applying our convex integration scheme, we need further regularize the energy compatible subsolution, and moreover to enhance the defect $R$ to be positive definite. 

Compared with the incompressible case, a notable difference for the compressible system is the additional contribution to the defect from the density variable. Because of this, we will modify our definition of the energy compatible subsolutions as follows. 
\begin{definition}[Energy compatible subsolutions]\label{defn e-subsoln comp}
Let $\gamma > 1$, $\IE, T >0$, and $\mathcal{M}^+$ be the set of finite nonnegative (signed) Borel measures on $\T^n$. We say that 
\[
(\rho, V, \mathcal R, r) \in L^\infty(\R_+; L^\gamma) \times L^\infty(\R_+; L^{\frac{2\gamma}{\gamma + 1}}) \times L^\infty_{w^*}(\R_+; \mathcal{M}^+(\T^n; \sym)) \times L^\infty_{w^*}(\R_+; \mathcal{M}^+)
\]
is an $(\IE, T)$-energy compatible subsolution of the compressible Euler equations if the following conditions are satisfied.
\begin{enumerate}[label=\rm(C\arabic*)]
\item \textup{(Weak subsolution)}\label{es subsoln comp} 
$\rho \ge 0$, $V = 0$ whenever $\rho = 0$, and the following system 
\begin{equation}\label{definition of sub-solution}
\left\{
\begin{split}&\rho_t+\Div V=0,
\\& V_t+\Div \left(\frac{V \otimes V}{\rho}+ \mathcal R+r \id+p(\rho) \id\right)=0;
\end{split}\right.
\end{equation}
holds in the sense of distribution on $\R_+ \times \T^n$.
\item \textup{(Short-time energy saturation)}\label{es saturate comp}
   For almost every $t \in [0, T]$ it holds that
   \begin{equation*}
   \int_{\T^n} \LC E(\rho, V) + \frac12 \textup{\trace} \mathcal R + \frac{r}{\gamma - 1} \RC \,dx = \IE,
   \end{equation*}
   where
   \begin{equation}\label{entropy}
   E(\rho, V) := \frac{|V|^2}{2\rho} + \frac{p(\rho)}{\gamma - 1}
   \end{equation}
   is the associated entropy.
\item \textup{(Energy inequality)}\label{es ineq comp}
   For almost every $t \ge T$ it holds that
   \begin{equation*}
   \int_{\T^n} \LC E(\rho, V) + \frac12 \textup{\trace} \mathcal R + \frac{r}{\gamma - 1} \RC \,dx \le \IE.
   \end{equation*}  
\end{enumerate}
\end{definition}

Analogous to Proposition \ref{prop e-subsoln incomp}, we have the following result ensuring the existence of the compressible energy compatible solutions. 
\begin{proposition}\label{prop e-subsoln comp}
Let $\rho^0, v^0 \in C^1(\T^n)$, $\rho^0 > 0$. For any $\gamma > 1$ and $\nu > 0$ consider $(\rho_\nu, V_\nu)$ the (global) weak solution to the following compressible Navier--Stokes equation constructed in \cite{VY2016,bresch2019global} with initial data $(\rho^0, V^0 := \rho^0 v^0)$ 
\begin{equation}
\label{Com NS}
 \left\{
\begin{split}&
\partial_t\rho_{\nu}+\Div V_{\nu}=0,
\\&\partial_t V_{\nu}+\Div \left(\frac{V_{\nu} \otimes V_{\nu}}{\rho_{\nu}}+ p(\rho) I_n\right)= \Div \LC \sqrt{\nu \rho_\nu} \mathbb{S}_\nu \RC, 
\end{split}\right.
\end{equation}
where
\[
\sqrt{\nu \rho_\nu} \mathbb{S}_\nu := \nu\rho_{\nu}\mathbb{D} v_{\nu} \quad \text{with} \quad \mathbb{D} v_{\nu} := \LC \frac{\nabla v_\nu + \nabla^T v_\nu}{2} \RC \quad \text{and} \quad V_\nu = \rho_\nu v_\nu.
\]
Set 
\[
\IE := \int_{\T^n} E(\rho^0, V^0) \,dx = \int_{\T^n} \LC \frac{|V^0|^2}{2\rho^0} + \frac{p(\rho^0)}{\gamma - 1} \RC \,dx. 
\]
Then there exist a $T>0$ and an $(\IE, T)$-energy compatible subsolution $(\rho, V, \mathcal R, r)$ of the compressible Euler equations such that up to a subsequence
\[
(\rho_\nu, V_\nu) \rightharpoonup (\rho, V) \ \ \text{in }\ \mathcal{D}' \quad \text{as }\ \nu \to 0.
\]
Moreover, $(\rho, V)$ is a Lipschitz solution to the compressible Euler equation on $[0, T]$ with $(\rho, V)|_{t = 0} = (\rho^0, V^0)$.
\end{proposition}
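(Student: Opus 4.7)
The plan mirrors the incompressible case treated in Proposition \ref{prop e-subsoln incomp}, with the necessary adjustments to accommodate the two defect measures $\mathcal R$ and $r$ arising respectively from the velocity/momentum fluctuation and from the pressure fluctuation.

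\textbf{Step 1 (Uniform bounds and weak compactness).} I would start from the energy inequality satisfied by the global weak solution $(\rho_\nu,V_\nu)$ of \eqref{Com NS} constructed in \cite{VY2016,bresch2019global}:
\begin{equation*}
\int_{\T^n} E(\rho_\nu,V_\nu)(t,\cdot)\,dx+\nu\int_0^t\int_{\T^n}\rho_\nu|\mathbb{D}v_\nu|^2\,dx\,ds\le\IE.
\end{equation*}
This gives uniform bounds for $\rho_\nu$ in $L^\infty(\R_+;L^\gamma(\T^n))$, for $V_\nu$ in $L^\infty(\R_+;L^{\frac{2\gamma}{\gamma+1}}(\T^n))$, and for $\sqrt{\nu\rho_\nu}\,\mathbb{D}v_\nu$ in $L^2_{t,x}$. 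After extracting a subsequence one obtains
\begin{equation*}
\rho_\nu\rightharpoonup \rho,\qquad V_\nu\rightharpoonup V,\qquad \frac{V_\nu\otimes V_\nu}{\rho_\nu}\rightharpoonup \frac{V\otimes V}{\rho}+\mathcal R,\qquad p(\rho_\nu)\rightharpoonup p(\rho)+r,
\end{equation*}
in the sense of distributions, where $\mathcal R\in L^\infty_{w^*}(\R_+;\mathcal M^+(\T^n;\sym))$ and $r\in L^\infty_{w^*}(\R_+;\mathcal M^+)$ thanks to the convexity/positive definiteness of the quadratic form $V\mapsto V\otimes V/\rho$ and the convexity of $p(\rho)=\rho^\gamma$ (cf. the standard div-curl / Young-measure argument).

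\textbf{Step 2 (Passage to the limit and verification of \ref{es subsoln comp} and \ref{es ineq comp}).} The viscous dissipation term passes to zero in $\mathcal D'$ since
\begin{equation*}
\bigl\|\sqrt{\nu\rho_\nu}\,\mathbb{S}_\nu\bigr\|_{L^2_{t,x}}=O(\sqrt\nu),
\end{equation*}
so sending $\nu\to0$ in \eqref{Com NS} produces exactly \eqref{definition of sub-solution}, which is \ref{es subsoln comp}. For \ref{es ineq comp}, by lower-semicontinuity of convex functionals under weak convergence,
\begin{equation*}
\int_{\T^n}\LC E(\rho,V)+\tfrac12\trace \mathcal R+\tfrac{r}{\gamma-1}\RC(t,\cdot)\,dx\le\liminf_{\nu\to0}\int_{\T^n}E(\rho_\nu,V_\nu)(t,\cdot)\,dx\le\IE
\end{equation*}
for a.e. $t\ge 0$.

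\textbf{Step 3 (Short-time saturation via weak--strong uniqueness).} Since $\rho^0>0$ and $(\rho^0,V^0)\in C^1(\T^n)$, the classical local existence theory for the compressible Euler system (see e.g. \cite{benzoni2007multi}) yields some $T>0$ and a Lipschitz solution $(\bar\rho,\bar V)$ on $[0,T]\times\T^n$ with $\bar\rho>0$ uniformly bounded away from zero. The key estimate is a relative entropy inequality between the degenerate Navier--Stokes solution $(\rho_\nu,V_\nu)$ and the smooth Euler solution $(\bar\rho,\bar V)$, analogous to the one used in the incompressible case but built on the relative entropy
\begin{equation*}
\mathcal H(\rho_\nu,V_\nu\,|\,\bar\rho,\bar V)=\int_{\T^n}\LB \tfrac{\rho_\nu}{2}\LV\tfrac{V_\nu}{\rho_\nu}-\tfrac{\bar V}{\bar\rho}\RV^2+\tfrac{1}{\gamma-1}\bigl(p(\rho_\nu)-p(\bar\rho)-p'(\bar\rho)(\rho_\nu-\bar\rho)\bigr)\RB dx.
\end{equation*}
The Lipschitz regularity of $(\bar\rho,\bar V)$ and the Gr\"onwall-type argument (see Dafermos \cite{Dafermos} and Di Perna \cite{DiPerna}, adapted to the degenerate viscosity framework) yield $\mathcal H(\rho_\nu,V_\nu\,|\,\bar\rho,\bar V)(t)\to0$ uniformly on $[0,T]$, so $(\rho_\nu,V_\nu)\to(\bar\rho,\bar V)$ strongly in $L^1$ on $[0,T]\times\T^n$. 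Combining this with the weak convergences of Step 1 forces $\rho=\bar\rho$, $V=\bar V$, $\mathcal R\equiv0$ and $r\equiv0$ on $[0,T]$, which gives \ref{es saturate comp}, and simultaneously identifies $(\rho,V)$ as the Lipschitz Euler solution on $[0,T]$.

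\textbf{Main obstacle.} The delicate point is Step 3: deriving and controlling the relative entropy inequality in the presence of the degenerate viscosity $\sqrt{\nu\rho_\nu}\,\mathbb{S}_\nu$. One needs to show that the extra viscous remainders in the relative entropy identity (terms involving $\nu\rho_\nu\nabla v_\nu:\nabla\bar v$) are controlled by the uniform bound on $\sqrt{\nu\rho_\nu}\,\mathbb{D}v_\nu$ in $L^2$ and hence vanish as $\nu\to0$. This is the analogue in the degenerate-viscosity setting of the classical weak--strong uniqueness, and once it is in place the short-time saturation follows automatically.
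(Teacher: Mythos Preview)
Your proposal is correct and follows essentially the same approach as the paper: uniform energy bounds from \cite{VY2016,bresch2019global}, extraction of weak limits defining the defects $\mathcal R$ and $r$, passage to the limit for \ref{es subsoln comp} and \ref{es ineq comp}, and a relative entropy / Gr\"onwall argument against the local Lipschitz Euler solution to obtain \ref{es saturate comp}. The paper carries out exactly these steps, with the relative entropy written in the abstract form $E(U_1|U_2)=E(U_1)-E(U_2)-E'(U_2)\cdot(U_1-U_2)$ and the viscous remainder absorbed precisely as you anticipate in your ``Main obstacle'' paragraph.
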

\begin{remark}
Here we follow the notation of \cite{LV2018} to use $\mathbb S_\nu$ in the dissipation term since the a priori estimates do not seem to be sufficient to define $\nabla v_\nu$.
\end{remark}
\begin{proof}
Recall from \cite{VY2016,bresch2019global} that system \eqref{Com NS} admits a global weak solution $(\rho_\nu, V_\nu)$ with $\rho_\nu \ge 0$ and $(\rho_\nu, V_\nu)|_{t = 0} = (\rho^0, V^0)$ and for a.e. $t \ge 0$, 
\begin{equation}\label{comp energy ineq}
\int_{\T^n} E(\rho_\nu, V_\nu) \,dx + \int^t_0\int_{\T^n} |\mathbb{S}_{\nu}|^2 \,dxds  \le \int_{\T^n} E(\rho^0, V^0) \,dx.
\end{equation}
where the dissipation term $\int^t_0\int_{\T^n} |\mathbb{S}_{\nu}|^2 \,dxds$ is formally $\nu \int^t_0\int_{\T^n} \rho_\nu |\mathbb{D} v_{\nu}|^2 \,dxds$. \footnote{Although \eqref{comp energy ineq} is not explicitly given in \cite{VY2016}, one may easily obtain it from replacing the term $\int^t_0\int_{\Omega} \rho_\nu |\mathbb{D} v_{\nu}|^2 \,dxds$ in \cite[(1.7)]{VY2016} by $\int^t_0\int_{\Omega} |\mathbb{S}_{\nu}|^2 \,dxds$, taking the limit as $\kappa \to 0$, then $r_0, r_1 \to 0$, and using the weak lower semicontinuity of $\int^t_0\int_{\Omega} |\mathbb{S}_{\nu}|^2 \,dxds$.}
This clearly yields that as $\nu \to 0$, up to a subsequence, 
\begin{equation*}
(\rho_\nu, V_\nu) \rightharpoonup (\rho, V) \ \ \text{weakly in }\ L^{\infty}(\R_+;L^{\gamma}(\mathbb T^n)) \times L^{\infty}(\R_+; L^{\frac{2\gamma}{\gamma+1}}( \mathbb T^n)),
\end{equation*}
which defines
\begin{equation}\label{def Rr}
\mathcal R := \lim_{\nu\to0}\frac{V_{\nu}\otimes V_{\nu}}{\rho_{\nu}}-\frac{V \otimes V}{\rho}, \qquad r := \lim_{\nu\to 0}p(\rho_{\nu})- p(\rho) \quad \text{in }\ \mathcal D'.
\end{equation}
Thanks to convexity we know that $(\mathcal R, r) \in L^\infty_{w^*}(\R^n; \mathcal{M}^+(\T^n; \sym)) \times L^\infty_{w^*}(\R^n; \mathcal{M}^+)$. Therefore \ref{es subsoln comp} follows by sending $\nu \to 0$ in \eqref{Com NS}.

From \eqref{def Rr} we see that
\[
\frac{|V_\nu|^2}{\rho_\nu} \rightharpoonup \frac{|V|^2}{\rho} + \trace \mathcal R, 
\]
which, together with \eqref{comp energy ineq}, implies \ref{es ineq comp}.

Similar as in Proposition \ref{prop e-subsoln incomp}, \ref{es saturate comp} follows from a weak-strong uniqueness argument. For the sake of completeness we will briefly sketch the idea. Since $(\rho^0, v^0) \in C^1(\T^n)$, we know that there exists a unique classical solution $(\rho_E, v_E)$ to the Euler equations on some time interval $[0, T]$ with $\rho_E > 0$.

Denote
\[
U_\nu := (\rho_\nu, V_\nu), \qquad U_E := (\rho_E, V_E := \rho_E v_E)
\]
The entropy for the Euler system is given by $E(U)$ defined in \eqref{entropy}, which is regular and strictly convex for $U \in \mathcal V := \R_+ \times \R^n$. Recall the definition of the relative entropy $E(\cdot | \cdot): \mathcal V \times \mathcal V \to \R$
\[
E(U_1 | U_2) = E(U_1) - E(U_2) - E'(U_2) \cdot (U_1 - U_2).
\]
The convexity of $E$ ensures that the relative entropy $E(U_1 | U_2)$ defines a pseudo-distance on $\mathcal V$, and hence $E(U_1 | U_2) = 0$ if and only if $U_1 = U_2$.

Since $U^0_\nu = U^0_E$, direct computation yields that for $t \in [0, T]$,
\begin{align*}
\int_{\T^n} E(U_\nu | U_E) \,dx &\le - \int^t_0 \int_{\T^n} \nabla_x E'(U_E) : \LC 0, \frac{(V_\nu - V_E) \otimes (V_\nu - V_E)}{\rho_\nu} + \frac{p(\rho_\nu | \rho_E)}{\gamma - 1} \id \RC \,dx ds \\
& \quad - \int^t_0\int_{\T^n} |\mathbb{S}_\nu|^2 \,dx ds + \int^t_0 \int_{\T^n} \sqrt{\nu \rho_\nu} \nabla v_E : \mathbb{S}_\nu \,dx ds \\
& \le C_\nu \int^t_0 \int_{\T^n} E(U_\nu | U_E) \,dx ds + 2 \nu \int^t_0 \int_{\T^n} \rho_\nu |\nabla v_E|^2 \,dx ds \\
& \le C_\nu \LC \int^t_0 \int_{\T^n} E(U_\nu | U_E) \,dx ds + \nu \RC.
\end{align*}
Sending $\nu \to 0$ and applying Gronwall it follows that
\[
(\rho, V) = (\rho_E, V_E) \quad \text{for} \quad t \in [0, T].
\]
This further implies that $\mathcal R \equiv 0$ and $r \equiv 0$ for a.e. $t \in [0, T]$.
\end{proof}

\subsection{Convex integration}\label{subsec ci comp}
Following the same procedure as in Section \ref{subsec ci incomp}, we explain in the following how to construct infinitely many energy weak solutions emanating from a small neighborhood of a given subsolution data. 

\begin{theorem}\label{thm ci comp}
For any given $1 < \gamma \le 1 + \frac2n$, and $\IE, T > 0$, assume that $(\rho, V, \mathcal R, r)$ with $\rho > 0$ is a smooth $(\IE, T)$-energy compatible subsolution of the compressible Euler equations with initial value $(\rho^0, V^0, \mathcal R^0, r^0)$ such that $R(t,x) := \mathcal R(t,x) + r(t,x) \id > 0$ is positive definite for every $(t,x) \in \R_+ \times \T^n$. Then for any $\varepsilon > 0$, there exist infinitely many initial values $(\widetilde \rho^0_\varepsilon, \widetilde V^0_\varepsilon)$ such that $\widetilde \rho^0_\varepsilon > 0$ and 
\be\label{ci data cond1 comp}
\int_{\T^n} E(\widetilde \rho^0_\varepsilon, \widetilde V^0_\varepsilon) \,dx = \IE,  \qquad \|\widetilde \rho^0_\varepsilon - \rho^0\|^\gamma_{L^\gamma(\T^n)} + \LN \frac{\widetilde V^0_\varepsilon}{\sqrt{\widetilde\rho^0_\varepsilon}} - \frac{V^0}{\sqrt{\rho^0}} \RN^2_{L^2(\T^n)} < \varepsilon + \int_{\T^n} \textup{tr} R^0 \,dx,
\ee
where $R^0 := R|_{t = 0}$. For each of such initial values there exist infinitely many $(\widetilde \rho_\varepsilon, \widetilde V_\varepsilon) \in L^\infty(\R_+; L^\gamma(\T^n)) \times L^\infty(\R_+; L^{\frac{2\gamma}{\gamma+1}}(\T^n))$ which are global weak solutions to the compressible Euler equations with $(\widetilde \rho_\varepsilon, \widetilde V_\varepsilon) |_{t = 0} = (\widetilde \rho^0_\varepsilon, \widetilde V^0_\varepsilon)$ and 
\begin{equation}\label{ci data cond2 comp}
\int_{\T^n} E(\widetilde \rho_\varepsilon,\widetilde V_\varepsilon) \,dx \le \IE \quad \text{a.e. }\ t > 0.
\end{equation}
\end{theorem}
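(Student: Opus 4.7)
The plan is to imitate the double convex integration from Theorem \ref{thm ci incomp}: first convex-integrate the given subsolution on a short initial interval $[0,t_0]$ to produce infinitely many Euler solutions that agree with the subsolution at both endpoints, then perform a second convex integration on $[t_0,\infty)$, and stitch the two pieces after a time-shift by some $\widetilde t \in (0,t_0)$. The shift moves the wild data to $t=0$; the gluing point $t=t_0-\widetilde t$ inherits weak-in-time continuity because both branches coincide at $t=t_0$ with $V(t_0,\cdot)$. The new initial density is $\widetilde\rho^0_\varepsilon := \rho(\widetilde t,\cdot)$, which remains close to $\rho^0$ by uniform continuity of $\rho$, and the new momentum is $\widetilde V^0_\varepsilon := \hat V(\widetilde t,\cdot)$ from the first convex integration.

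First I would pick $t_0 < T/2$ small enough that the smooth Euler solution coinciding with the subsolution on $[0,T]$ (guaranteed by \ref{es saturate comp} and weak--strong uniqueness as in Proposition \ref{prop e-subsoln comp}) satisfies $\|\rho(t)-\rho^0\|_{L^\gamma}^\gamma + \|V(t)/\sqrt{\rho(t)} - V^0/\sqrt{\rho^0}\|_{L^2}^2 < \varepsilon/6$ for all $t \in [0,t_0]$. Since $R = \mathcal R + r\,\id > 0$ on $[0,t_0]\times\T^n$ by hypothesis, Proposition \ref{prop reduction} applied on $P=(0,t_0)\times\T^n$ yields infinitely many weak Euler solutions $(\rho,\hat V)$ with $\hat V|_{t=0}=V^0$ and $\hat V(t_0,\cdot)=V(t_0,\cdot)$. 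A contradiction argument identical to the one in the proof of Theorem \ref{thm ci incomp}---using weak-in-time continuity and the fact that Lemma \ref{lem conv int} produces infinitely many distinct $\hat V$---then shows that for almost every $\widetilde t \in (0,t_0)$ the set $\{\hat V(\widetilde t,\cdot)\}$ is infinite, supplying infinitely many initial values.

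The heart of the matter is the energy identity. By Remark \ref{rk energy density}, almost everywhere on $P$,
\begin{equation*}
\frac{|\hat V|^2}{\rho} = \frac{|V|^2}{\rho} + \trace R = \frac{|V|^2}{\rho} + \trace \mathcal R + n\,r.
\end{equation*}
Integrating at $t = \widetilde t$ and invoking the saturation property \ref{es saturate comp} gives
\begin{equation*}
\int_{\T^n} E(\widetilde \rho^0_\varepsilon,\widetilde V^0_\varepsilon)\,dx = \IE + \LC \tfrac{n}{2} - \tfrac{1}{\gamma-1} \RC \int_{\T^n} r(\widetilde t,x)\,dx.
\end{equation*}
The restriction $\gamma \le 1 + \tfrac{2}{n}$ makes the coefficient nonpositive, so the right-hand side is at most $\IE$; equality as required by \eqref{ci data cond1 comp} is automatic when $\gamma = 1 + \tfrac{2}{n}$ and, in the strict case, must come from an arrangement of the subsolution that makes $r \equiv 0$ (the pressure-averaging defect has to be shifted into $\mathcal R$, via a compressible analog of the convex-combination device Lemma \ref{lem conv combo} developed later in the paper). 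For the closeness bound in \eqref{ci data cond1 comp}, I would expand $\|\widetilde V^0_\varepsilon/\sqrt{\widetilde \rho^0_\varepsilon} - V^0/\sqrt{\rho^0}\|_{L^2}^2$ via the polar identity, substitute the energy identity above to handle the squared norm of $\widetilde V^0_\varepsilon/\sqrt{\widetilde\rho^0_\varepsilon}$, and control the cross-term by testing the weak momentum equation for $(\rho,\hat V)$ against the smooth function $V^0/\rho^0$ together with the small-time Lipschitz estimates from the first step.

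For the global extension, a second convex integration on $[t_0,\infty)$ applied to the same subsolution (again legitimate because $R > 0$) gives $\breve V$ with $\breve V(t_0,\cdot) = V(t_0,\cdot)$; setting
\begin{equation*}
\widetilde V_\varepsilon(t,\cdot) := \begin{cases} \hat V(t + \widetilde t,\cdot), & 0 \le t \le t_0 - \widetilde t, \\ \breve V(t + \widetilde t,\cdot), & t \ge t_0 - \widetilde t, \end{cases}
\end{equation*}
together with $\widetilde\rho_\varepsilon(t,\cdot) := \rho(t + \widetilde t,\cdot)$, produces the claimed global weak solutions. The inequality \eqref{ci data cond2 comp} follows by combining \ref{es ineq comp} with the same $\gamma$-balance computation as above. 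The chief obstacle is precisely the energy equality established in the previous paragraph: achieving $\int E(\widetilde\rho^0_\varepsilon,\widetilde V^0_\varepsilon)\,dx = \IE$ rather than merely $\le \IE$ leans heavily on the restriction $\gamma \le 1 + \tfrac{2}{n}$ combined with a careful construction (forthcoming in the paper) that arranges the subsolution so that its scalar defect $r$ is negligible for the energy accounting.
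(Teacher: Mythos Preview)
Your double convex integration skeleton (first on $[0,t_0]$, time-shift by $\widetilde t$, second on $[t_0,\infty)$, glue at $t_0-\widetilde t$) matches the paper, and your treatment of the second integration and of \eqref{ci data cond2 comp} via $\tfrac{n}{2}\le\tfrac{1}{\gamma-1}$ is correct. But there is a real gap at the step you yourself flag as the chief obstacle: the exact equality $\int E(\widetilde\rho^0_\varepsilon,\widetilde V^0_\varepsilon)\,dx=\IE$.

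Your proposed fix, namely to preprocess the subsolution so that $r\equiv 0$, is not what the paper does and is not compatible with the rest of the paper: the smoothing/strictifying step (Theorem~\ref{thm smoothing and conv combo comp}) actually produces $\widetilde r>0$, not $\widetilde r\equiv 0$. The paper instead introduces a \emph{compensating potential energy} $r_c(t)$, constant in $x$, defined so that
\[
\int_{\T^n}\big(r(t,x)+r_c(t)\big)\,dx=\frac{2}{n(\gamma-1)}\int_{\T^n}r(t,x)\,dx,
\]
i.e.\ $r_c(t)=\bigl(\tfrac{2}{n(\gamma-1)}-1\bigr)\fint_{\T^n}r(t,x)\,dx$. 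Because $r_c$ depends only on $t$, $\Div(r_c\,\id)=0$, so $(\rho,V,\mathcal R,r+r_c)$ still solves \eqref{definition of sub-solution} with the enlarged Reynolds stress $\widetilde R:=\mathcal R+(r+r_c)\id$; and $r_c\ge 0$ precisely because $\gamma\le 1+\tfrac{2}{n}$, so $\widetilde R\ge R>0$ remains positive definite. Convex integrating with $\widetilde R$ on $[0,t_0]$ then gives, by Remark~\ref{rk energy density},
\[
\int_{\T^n}E(\rho,\hat V)\,dx
=\int_{\T^n}\Bigl(E(\rho,V)+\tfrac12\trace\mathcal R+\tfrac{n}{2}(r+r_c)\Bigr)\,dx
=\int_{\T^n}\Bigl(E(\rho,V)+\tfrac12\trace\mathcal R+\tfrac{r}{\gamma-1}\Bigr)\,dx=\IE,
\]
so the equality in \eqref{ci data cond1 comp} holds on the nose for every $\gamma$ in the range, with no preprocessing of $r$ required. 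A minor side remark: your appeal to weak--strong uniqueness to say that $(\rho,V)$ is a smooth Euler solution on $[0,T]$ is not valid under the hypotheses of this theorem, since here $R>0$ everywhere; the short-time closeness you need follows simply from smoothness of $(\rho,V)$.
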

\begin{proof}
The proof follows the same idea as in Theorem \ref{thm ci incomp}. For the sake of completeness we provide the detailed argument. 

The smoothness of $(\rho, V, \mathcal R, r)$ and $\rho > 0$ implies that for any any $\varepsilon > 0$ there exists some small $t_0 < \frac{T}{2}$ such that $\forall \ t < t_0$,
\begin{equation}\label{cond ci 1 comp}
\begin{split}
& \sup_{0\le t \le t_0} \|\rho - \rho^0\|^\gamma_{L^\gamma(\T^n)} < \frac{\varepsilon}{2}, \\
& t_0 \LB \sup_{0\le t \le t_0} \LC \frac{\LN \partial_t p(\rho) \RN_{L^1(\T^n)}}{\gamma - 1} + 4 \LC \IE \RC^2\LN \frac{\partial_t \sqrt{\rho}}{\sqrt{\rho^0}} \RN_{L^\infty(\T^n)} \RC + 2 \IE \left\| \nabla \LC \frac{V^0}{\rho^0} \RC \right\|_{L^\infty(\T^n)} \RB  < \frac{\varepsilon}{4}, 
\end{split}
\end{equation}

The positivity of $R$ allows us to apply our convex integration program as in Proposition \ref{prop reduction} on $[0, t_0]$. However this could potentially lead to a loss of total energy resulting from the potential energy part. To resolve this issue, we will introduce the `compensating potential energy density' 
\begin{equation}\label{def comp PE}
r_c(t) := \LC \frac{2}{n(\gamma -1)} \RC \fint_{\T^n} r(t, x) \,dx.
\end{equation}
It is easy to see that $r_c(t)$ only depends on time, and it satisfies
\begin{equation}\label{prop comp PE}
\begin{split}
& \Div \LC r_c(t) \id \RC = 0, \\
& r_c(t) \ge 0, \\
\int_{\T^n} & \LB r(t,x) + r_c(t) \RB \,dx = \frac{2}{n(\gamma-1)} \int_{\T^n} r(t,x) \,dx.
\end{split}
\end{equation}
From the first and second properties above we can verify that $(\rho, V, \mathcal R, r + r_c)$ solves 
\begin{equation}\label{comp sub sys}
\left\{
\begin{split}&\rho_t+\Div V=0,
\\& V_t+\Div \left(\frac{V \otimes V}{\rho}+ \mathcal R+ (r + r_c) \id+p(\rho) \id\right)=0,
\end{split}\right.
\end{equation}
with $\widetilde R := \mathcal R+ (r + r_c) \id > 0$. 

Therefore we can perform convex integration for the above system \eqref{comp sub sys} to produce infinitely many weak Euler solutions $(\hat \rho, \hat V)$ on $[0, t_0]$ with
\[
\hat \rho = \rho, \quad \hat V |_{t = 0} = V^0, \quad \hat V|_{t = t_0} = V|_{t = t_0}.
\]
At the energy level, from \eqref{ci energy density} and the third property of \eqref{prop comp PE}, we know that for a.e. $t \in [0, t_0]$,
\be\label{e-eq comp}
\begin{split}
\int_{\T^n} E(\rho, \hat V) \,dx & = \int_{\T^n} \LC E(\rho, V) + \frac12 \trace \widetilde{R} \RC \,dx \\
& = \int_{\T^n} \LC E(\rho, V) + \frac12 \trace \mathcal R + \frac{n}{2} (r + r_c) \RC \,dx \\
& = \int_{\T^n} \LC E(\rho, V) + \frac12 \trace \mathcal R + \frac{r}{\gamma - 1} \RC \,dx = \IE,
\end{split}
\ee
where the last equality follows from \ref{es saturate comp}. 

Hence we can choose infinitely many $\widetilde t \in (0, t_0)$ to define the initial data
\[
(\widetilde \rho^0_\varepsilon, \widetilde V^0_\varepsilon) := (\rho, \hat V)|_{t = \widetilde t}.
\]
Therefore
\[
\int_{\T^n} E(\widetilde \rho^0_\varepsilon, \widetilde V^0_\varepsilon) \,dx \le \IE. 
\]
Meanwhile, 
\begin{align*}
\frac12\int_{\T^n} \LV \frac{\widetilde V^0_\varepsilon}{\sqrt{\widetilde\rho^0_\varepsilon}} - \frac{V^0}{\sqrt{\rho^0}} \RV^2 \,dx & = \int_{\T^n} \LB \frac{|\widetilde V^0_\varepsilon|^2}{2\widetilde \rho^0}  - \frac{|V^0|^2}{2\rho^0} +  \frac{|V^0|^2 - \widetilde V^0_\varepsilon \cdot V^0}{\rho^0} + \frac{\widetilde V^0_\varepsilon}{\sqrt{\widetilde \rho^0_\varepsilon}} \cdot \frac{V^0}{\sqrt{\rho^0}} \LC \sqrt{\frac{\widetilde \rho^0_\varepsilon}{\rho^0}} - 1 \RC \RB \,dx \\
& = \int_{\T^n} \frac{|\widetilde V^0_\varepsilon|^2}{2\widetilde \rho^0} \,dx - \LC \IE - \int_{\T^n} \LC \frac{p(\rho^0)}{\gamma - 1} + \frac12 \textup{\trace} \mathcal R^0 + \frac{r^0}{\gamma - 1}  \RC \,dx \RC \\
& \quad - \int^{\widetilde t}_0 \int_{\T^n} \partial_t \hat V \cdot \frac{V^0}{\rho^0} \,dxdt + \int_{\T^n} \frac{\widetilde V^0_\varepsilon}{\sqrt{\widetilde \rho^0_\varepsilon}} \cdot \frac{V^0}{\sqrt{\rho^0}} \LC \sqrt{\frac{\widetilde \rho^0_\varepsilon}{\rho^0}} - 1 \RC \,dx \\
& \le \IE - \int_{\T^n} \frac{p(\widetilde \rho^0_\varepsilon)}{\gamma - 1} \,dx - \LC \IE - \int_{\T^n} \LC \frac{p(\rho^0)}{\gamma - 1} + \frac12 \textup{\trace} \LC \mathcal R^0 + r^0 \id \RC \RC \,dx \RC \\
& \quad + \LV \int^{\widetilde t}_0 \int_{\T^n} \LC \frac{\hat V \otimes \hat V}{\rho} + p(\rho) \RC : \nabla \LC \frac{V^0}{\rho^0} \RC \,dxdt \RV + 4 \LC \IE \RC^2 \LN \sqrt{\frac{\widetilde \rho^0_\varepsilon}{\rho^0}} - 1 \RN_{L^\infty} \\
& \le \int_{\T^n} \frac{|p(\rho^0) - p(\widetilde \rho^0_\varepsilon)|}{\gamma - 1} \,dx + \frac12 \int_{\T^n} \textup{\trace} \LC \mathcal R^0 + r^0 \id \RC\,dx + 2 \widetilde t \IE  \left\| \nabla \LC \frac{V^0}{\rho^0} \RC \right\|_{L^\infty} \\
& \quad + 4 \widetilde t \LC \IE \RC^2 \sup_{0 \le t \le \widetilde t} \LN \frac{\partial_t \sqrt{\rho}}{\sqrt{\rho^0}} \RN_{L^\infty} \\
& < \frac12 \LC \frac{\varepsilon}{2} +  \int_{\T^n} \textup{\trace} R^0\,dx \RC \qquad \text{by \eqref{cond ci 1 comp}}.
\end{align*}
This together with \eqref{cond ci 1 comp} proves \eqref{ci data cond1 comp}. 

For each of the above initial data $(\widetilde \rho^0_\varepsilon, \widetilde V^0_\varepsilon)$ we define on $\R_+ \times \T^n$
\begin{equation*}
(\widetilde \rho_\varepsilon, \widetilde V_\varepsilon)(t, \cdot) = \left\{\begin{array}{ll}
(\rho, \hat V) (t + \widetilde t, \cdot), \quad & \ \text{ for } \ t \le t_0 - \widetilde t, \\
(\rho, \breve V) (t + \widetilde t, \cdot),  & \ \text{ for }\ t \ge t_0 - \widetilde t,
\end{array}\right.
\end{equation*}
where $(\rho, \breve V)$ is any weak solution to the compressible Euler equations on $[t_0, \infty)$ constructed from convex integrating the energy compatible subsolution $(\rho, V, \mathcal R, r)$ on $[t_0, \infty)$ (note that our convex integration scheme leaves $\rho$ unchanged). This way
\[
(\rho, \hat V)(t_0) = (\rho, V)(t_0) = (\rho, \breve V)(t_0).
\]
Therefore $(\widetilde \rho_\varepsilon, \widetilde V_\varepsilon)$ is indeed a weak solution to the compressible Euler equations on $\R_+ \times \T^n$, and by construction we know that $(\widetilde \rho_\varepsilon, \widetilde V_\varepsilon)|_{t = 0} = (\widetilde \rho^0_\varepsilon, \widetilde V^0_\varepsilon)$ and $(\widetilde \rho_\varepsilon, \widetilde V_\varepsilon)$ satisfies \eqref{ci data cond2 comp}.
\end{proof}

\subsection{Smooth energy compatible strict subsolutions}\label{subsec strict subsoln comp}

The next step is to find a way to construct from an energy compatible subsolution to an energy compatible strict subsolution in the sense that $R := \mathcal R + r \id > 0$. This can be achieve by a similar convex combination technique as in Section \ref{subsec strict subsoln}. The difference is that we will only apply the convex combination on the density variable with a nontrivial constant state.

We first state the following lemma which is a compressible version of Lemma \ref{lem conv combo}. The proof follows along a very similar argument as before, and hence we omit it. 
\begin{lemma}\label{lem conv combo comp}
Let $(\Omega, \mu)$ be a probability space, that is, $\mu$ is a nonnegative measure on $\Omega$ such that $\mu(\Omega) = 1$. Fix $\IE, T > 0$. Let $(\rho, V, \mathcal R, r)$ be a measurable function from $\R_+ \times \T^n \times \Omega$ to $\R^n \times \R^{n \times n}$ such that for a.e. $\omega \in \Omega$, $(\rho_\omega, V_\omega, \mathcal R_\omega, r_\omega) := \LC \rho(\cdot, \omega), V(\cdot, \omega), \mathcal R(\cdot, \omega), r(\cdot, \omega) \RC$ is an $(\IE, T)$-energy compatible subsolution to the compressible Euler equations. Denote
\be\label{conv combo comp}
\begin{split}
& (\overline \rho, \overline V) := \mathbb{E} (\rho_\omega, V_\omega), \\
& \overline{\mathcal R} := \mathbb E(\mathcal R_\omega) + \mathbb E\LC \frac{V_\omega \otimes V_\omega}{\rho_\omega}\RC - \frac{\overline V \otimes \overline V}{\overline \rho}, \qquad \overline r := \mathbb{E}(r_\omega) + \mathbb{E}\LC p(\rho_\omega) \RC - p(\overline \rho),
\end{split}
\ee
where 
\[
\mathbb E(f_\omega) := \int_\Omega f_\omega d\mu(\omega).
\]
Then $(\overline \rho, \overline V, \overline{\mathcal R}, \overline r)$ is also an $(\IE, T)$-energy compatible subsolution.
\end{lemma}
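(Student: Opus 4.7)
The plan is to mirror the argument of Lemma \ref{lem conv combo} and verify the three conditions \ref{es subsoln comp}--\ref{es ineq comp} of Definition \ref{defn e-subsoln comp} for the averaged object. The definitions of $\overline{\mathcal R}$ and $\overline r$ in \eqref{conv combo comp} are crafted precisely so that the quadratic-in-$V$ term $\frac{V\otimes V}{\rho}$ and the nonlinear pressure $p(\rho)$ split additively under expectation; everything else is linear.

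\textbf{Step 1: Weak subsolution property \ref{es subsoln comp}.} Write $\overline p:=\mathbb E(p_\omega)$ for the averaged pressure distribution associated with \eqref{definition of sub-solution} for each $\omega$. Apply $\mathbb E$ to \eqref{definition of sub-solution}: the continuity equation is linear in $(\rho_\omega,V_\omega)$, so we immediately obtain $\partial_t\overline\rho+\Div\overline V=0$. For the momentum equation, expectation commutes with $\partial_t$ and $\Div$, and using the two identities
\begin{equation*}
\mathbb E\LC\frac{V_\omega\otimes V_\omega}{\rho_\omega}\RC=\overline{\mathcal R}-\mathbb E(\mathcal R_\omega)+\frac{\overline V\otimes\overline V}{\overline\rho},\qquad \mathbb E\LB p(\rho_\omega)\RB=\overline r-\mathbb E(r_\omega)+p(\overline\rho),
\end{equation*}
(which are just a rearrangement of \eqref{conv combo comp}) the cross terms telescope to give
\begin{equation*}
\partial_t\overline V+\Div\LC\frac{\overline V\otimes\overline V}{\overline\rho}+\overline{\mathcal R}+\overline r\id+p(\overline\rho)\id\RC=0
\end{equation*}
in $\mathcal D'(\R_+\times\T^n)$. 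The admissibility conditions $\overline\rho\ge 0$ and $\overline V=0$ on $\{\overline\rho=0\}$ follow by taking expectation of the pointwise conditions on each $(\rho_\omega,V_\omega)$.

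\textbf{Step 2: Positivity of $\overline{\mathcal R}$ and $\overline r$.} The key point, and the main (but mild) obstacle, is to ensure that the newly defined $\overline{\mathcal R}$ is still in $\mathcal M^+(\T^n;\sym)$ and $\overline r\in\mathcal M^+$. For $\overline{\mathcal R}$, the essential observation is that the map $(\rho,V)\mapsto\frac{V\otimes V}{\rho}$, defined on $\R_+\times\R^n$ with values in the cone of positive semidefinite matrices, is jointly convex: for any fixed $\xi\in\R^n$, $(\rho,V)\mapsto\frac{|V\cdot\xi|^2}{\rho}$ is the perspective of the convex map $V\mapsto|V\cdot\xi|^2$ and hence convex. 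Jensen's inequality (applied entrywise after contraction with arbitrary vectors $\xi$) then yields
\begin{equation*}
\mathbb E\LC\frac{V_\omega\otimes V_\omega}{\rho_\omega}\RC\ge\frac{\overline V\otimes\overline V}{\overline\rho}\quad\text{as a matrix measure},
\end{equation*}
so $\overline{\mathcal R}\ge\mathbb E(\mathcal R_\omega)\ge 0$. For $\overline r$, convexity of $\rho\mapsto p(\rho)=\rho^\gamma$ for $\gamma>1$ gives $\mathbb E[p(\rho_\omega)]\ge p(\overline\rho)$, so $\overline r\ge\mathbb E(r_\omega)\ge 0$. Measurability and the appropriate $L^\infty_{w^*}$ bounds descend directly from those of the family $\{(\rho_\omega,V_\omega,\mathcal R_\omega,r_\omega)\}$.

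\textbf{Step 3: Energy identity and inequality \ref{es saturate comp}--\ref{es ineq comp}.} The crucial algebraic identity, which is the compressible analogue of what happens in Lemma \ref{lem conv combo}, is the pointwise equality
\begin{equation*}
E(\overline\rho,\overline V)+\tfrac12\trace\overline{\mathcal R}+\tfrac{\overline r}{\gamma-1}=\mathbb E\LC E(\rho_\omega,V_\omega)+\tfrac12\trace\mathcal R_\omega+\tfrac{r_\omega}{\gamma-1}\RC,
\end{equation*}
which follows by expanding both sides using \eqref{conv combo comp}: the $\frac{|V|^2}{2\rho}$ terms combine with $\tfrac12\trace\overline{\mathcal R}$ to reproduce $\tfrac12\mathbb E\frac{|V_\omega|^2}{\rho_\omega}+\tfrac12\trace\mathbb E(\mathcal R_\omega)$, while the $\frac{p(\rho)}{\gamma-1}$ terms combine with $\tfrac{\overline r}{\gamma-1}$ to reproduce $\tfrac{\mathbb E[p(\rho_\omega)]}{\gamma-1}+\tfrac{\mathbb E(r_\omega)}{\gamma-1}$. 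Integrating over $\T^n$ and interchanging the spatial integral with the $\omega$-expectation by Fubini, the saturation \ref{es saturate comp} on $[0,T]$ and the inequality \ref{es ineq comp} on $[T,\infty)$ are obtained by simply taking the expectation of the corresponding pointwise-in-$\omega$ statements, which all have the common constant value $\IE$ (resp.\ bound $\IE$).

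The only genuine subtlety, as noted above, is the Jensen-type positivity for $\overline{\mathcal R}$, which requires interpreting the perspective of a convex function for matrix-valued measures; once this convexity is in place the remainder of the proof is a direct expectation-inside-integral computation.
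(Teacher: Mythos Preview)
Your proof is correct and matches the paper's intended approach: the paper in fact omits the proof of this lemma, saying only that it ``follows along a very similar argument'' to the incompressible Lemma~\ref{lem conv combo}, and your Steps~1 and~3 reproduce precisely that expectation-of-the-equation and expectation-of-the-energy computation. Your Step~2 (the Jensen/perspective-function argument for the positivity of $\overline{\mathcal R}$ and $\overline r$) usefully fills in a detail the paper leaves implicit; the line ``$\overline p:=\mathbb E(p_\omega)$'' is a harmless leftover from the incompressible case and can simply be deleted, since here the pressure is the explicit function $p(\rho)$ rather than a Lagrange multiplier.
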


With the above we are ready to state the main result of this subsection. 
\begin{theorem}\label{thm smoothing and conv combo comp}
Given $\IE, T > 0$ 
let $(\rho, V, \mathcal R, r)$ be an $(\IE, T)$-energy compatible subsolution to the compressible Euler equations with initial data $(\rho^0, V^0, \mathcal R^0, r^0)$ satisfying $\rho^0 \not\equiv 0$ and 
\begin{equation}\label{ic comp reg}
\begin{split}
& V^0 \not\equiv 0, \qquad \text{or} \qquad V^0 \equiv 0\  \text{ but }\ \rho^0 \text{ is not a constant,}\\
& \int_{\T^n} E(\rho^0, V^0) \,dx = \IE \quad \text{and} \quad \int_{\T^n} \LC \textup{\trace} \mathcal R^0 + \frac{r^0}{\gamma - 1} \RC \,dx = 0.
\end{split}
\end{equation}
Then for any $\varepsilon > 0$ there exists a smooth $(\IE, \frac{T}{2})$-energy compatible subsolution $(\widetilde \rho, \widetilde V, \widetilde{\mathcal R}, \widetilde r)$ with 
\be\label{strict subsoln comp}
\begin{split}
& \widetilde \rho > 0, \qquad \widetilde{\mathcal R} + \widetilde r \id > 0, \\
& \LN \frac{\widetilde V^0}{\sqrt{\widetilde\rho^0}} - \frac{V^0}{\sqrt{\rho^0}} \RN^2_{L^2} + \LN \widetilde \rho^0 - \rho^0 \RN^\gamma_{L^\gamma} + \int_{\T^n} \LC \textup{\trace} \widetilde{\mathcal R}^0 + \frac{\widetilde r^0}{\gamma -1} \RC \,dx < \varepsilon.
\end{split}
\ee 
\end{theorem}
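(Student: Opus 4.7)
The approach mirrors the incompressible analogue, Theorem~\ref{thm smoothing and conv combo}, but one must track the extra defects $\mathcal R$ and $r$ coming from both the velocity nonlinearity and the pressure. The main tool is the compressible convex-combination identity, Lemma~\ref{lem conv combo comp}, which I apply twice: once against a mollifier to produce a smooth subsolution, and once against a constant state to enforce strict positivity of $\mathcal R + r\id$.

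\emph{Step 1 (Strong continuity at $t=0$).} I would first establish that
$\rho(t) \to \rho^0$ in $L^\gamma(\T^n)$, that $V(t)/\sqrt{\rho(t)} \to V^0/\sqrt{\rho^0}$ in $L^2(\T^n)$, and that $\int_{\T^n}(\trace \mathcal R(t) + r(t)/(\gamma-1))\,dx \to 0$ as $t \to 0$. The last of these follows from \ref{es saturate comp}, the hypothesis $\int E(\rho^0, V^0)\,dx = \IE$, and weak lower semicontinuity of $t \mapsto \int E(\rho, V)\,dx$ using the weak $L^q$ continuity in time stemming from \eqref{definition of sub-solution}. The first two convergences are then obtained by combining this energy convergence with the weak convergence $(\rho(t), V(t)) \rightharpoonup (\rho^0, V^0)$ and the strict convexity of $E(\rho, V) = |V|^2/(2\rho) + \rho^\gamma/(\gamma-1)$.

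\emph{Step 2 (Mollification via Lemma~\ref{lem conv combo comp}).} For small $\alpha < T/2$, apply the lemma on the probability space $(\Omega, \mu) := ([0, T/2]\times \T^n,\, \varphi_\alpha(s, y) ds dy)$ with a standard mollifier $\varphi_\alpha$ (supported in $[0,1] \times \T^n$), with translates $(w_\rho, w_V, w_{\mathcal R}, w_r)(t, x; s, y) := (\rho, V, \mathcal R, r)(t+s, x+y)$. This produces a \emph{smooth} $(\IE, T/2)$-energy compatible subsolution $(\rho_\alpha, V_\alpha, \mathcal R_\alpha, r_\alpha)$ where $\rho_\alpha = \varphi_\alpha \ast \rho$, $V_\alpha = \varphi_\alpha \ast V$, and
\[
\mathcal R_\alpha = \varphi_\alpha \ast \mathcal R + \bigl[\varphi_\alpha\ast(V\otimes V/\rho) - V_\alpha \otimes V_\alpha/\rho_\alpha\bigr], \qquad r_\alpha = \varphi_\alpha \ast r + \bigl[\varphi_\alpha \ast p(\rho) - p(\rho_\alpha)\bigr].
\]
The bracketed commutators are nonnegative by Jensen's inequality applied to the convex maps $(\rho, V) \mapsto V \otimes V/\rho$ and $\rho \mapsto \rho^\gamma$.

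\emph{Step 3 (Convex combination with a constant state).} Pick $\rho^*, r^* > 0$ small enough that there exists $\eta > 0$ solving the scalar constraint $|\T^n|\left( (\rho^*)^\gamma/(\gamma-1) + n\eta/2 + r^*/(\gamma-1) \right) = \IE$; then the space-time constant $(\rho^*, 0, \eta \id, r^*)$ is trivially an $(\IE, T/2)$-energy compatible subsolution. Applying Lemma~\ref{lem conv combo comp} with the atomic measure $\mu = \lambda \delta_1 + (1-\lambda)\delta_2$ on state $1 = (\rho^*, 0, \eta\id, r^*)$ and state $2 = (\rho_\alpha, V_\alpha, \mathcal R_\alpha, r_\alpha)$ yields a smooth $(\IE, T/2)$-energy compatible $(\widetilde \rho, \widetilde V, \widetilde{\mathcal R}, \widetilde r)$ with $\widetilde \rho = \lambda \rho^* + (1-\lambda)\rho_\alpha \geq \lambda \rho^* > 0$, $\widetilde V = (1-\lambda) V_\alpha$, and a direct computation of \eqref{conv combo comp} gives
\[
\widetilde{\mathcal R} + \widetilde r \id \;=\; (1-\lambda)(\mathcal R_\alpha + r_\alpha \id) \;+\; \lambda(\eta + r^*) \id \;+\; \frac{\lambda(1-\lambda)\rho^*}{\rho_\alpha \widetilde\rho}\, V_\alpha \otimes V_\alpha \;+\; [(1-\lambda)p(\rho_\alpha) + \lambda p(\rho^*) - p(\widetilde\rho)] \id,
\]
whose last two summands are nonnegative, so $\widetilde{\mathcal R} + \widetilde r \id \geq \lambda(\eta + r^*) \id > 0$, establishing strict positivity.

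\emph{Step 4 (Initial-data estimates).} The $L^\gamma$ bound on $\widetilde\rho^0 - \rho^0$ and the $L^2$ bound on $\widetilde V^0/\sqrt{\widetilde\rho^0} - V^0/\sqrt{\rho^0}$ follow by first letting $\alpha \to 0$ (using Step 1 and standard mollification) and then letting $\lambda \to 0$, noting that $\widetilde V^0/\sqrt{\widetilde\rho^0} = (1-\lambda) V_\alpha^0/\sqrt{(1-\lambda)\rho_\alpha^0 + \lambda\rho^*}$ converges to $V_\alpha^0/\sqrt{\rho_\alpha^0}$. For the defect, the saturation property \ref{es saturate comp} at time $0$ gives
\[
\tfrac12 \int \trace \widetilde{\mathcal R}^0 \,dx + \int \frac{\widetilde r^0}{\gamma-1}\,dx \;=\; \IE - \int E(\widetilde\rho^0, \widetilde V^0)\,dx,
\]
and the right-hand side tends to $0$ as $\alpha, \lambda \to 0$ by dominated convergence and Step 1; since both summands on the left are nonnegative, the full quantity $\int(\trace\widetilde{\mathcal R}^0 + \widetilde r^0/(\gamma-1))\,dx$ can be made arbitrarily small.

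\emph{Main obstacle.} I expect the main technical point to be Step 1: upgrading the weak convergence of $(\rho(t), V(t))$ plus convergence of the energy $\int E(\rho, V)\,dx$ to strong convergence of $\rho$ in $L^\gamma$ and of $V/\sqrt\rho$ in $L^2$. Unlike the incompressible case, strict convexity of $E$ in $(\rho, V)$ is not uniform near the vacuum set and interacts nontrivially with the physical variable $V/\sqrt\rho$, so the argument requires care (separating the potential and kinetic parts, and exploiting that the potential part $\rho^\gamma$ controls $L^\gamma$-convergence of $\rho$ while $|V|^2/\rho$ controls $L^2$-convergence of $V/\sqrt\rho$). The remainder of the proof is bookkeeping of the two applications of Lemma~\ref{lem conv combo comp}.
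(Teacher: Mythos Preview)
Your proposal is correct and in fact streamlines the paper's argument in a nontrivial way. Both proofs mollify first via Lemma~\ref{lem conv combo comp} and then take a convex combination with a constant state, but you choose that state differently. The paper uses the \emph{defect-free} constant $(\hat\rho,0,0,0)$ with $p(\hat\rho)/(\gamma-1)=\IE/|\T^n|$; strict positivity of $\widetilde{\mathcal R}+\widetilde r\,\id$ then does not come for free, and the paper instead argues that $\widetilde r_1\not\equiv 0$ (this is precisely where the nondegeneracy hypothesis ``$V^0\not\equiv 0$ or $\rho^0$ not constant'' enters, via the strict-convexity/Jensen chain \eqref{jensen}), and finally mollifies a second time with a strictly positive kernel so that $\widetilde r\ge\varphi_\alpha*\widetilde r_1>0$. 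By contrast, your constant state $(\rho^*,0,\eta\,\id,r^*)$ carries built-in positive defects, and the convex-combination identity \eqref{conv combo comp} immediately yields $\widetilde{\mathcal R}+\widetilde r\,\id\ge\lambda(\eta+r^*)\,\id>0$; this saves a mollification step and, notably, never uses the nondegeneracy hypothesis, so your route actually proves a slightly stronger statement than Theorem~\ref{thm smoothing and conv combo comp} as written.

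Your Step~1 (strong $L^\gamma$/$L^2$ continuity at $t=0$) is a point the paper glosses over here (it writes only ``by taking $\alpha$ sufficiently small we have \eqref{ic close comp}''), though it does carry out the analogous argument in the incompressible Theorem~\ref{thm smoothing and conv combo}. Your sketch of how to upgrade weak convergence plus energy convergence to strong convergence via strict convexity of $E$ is the right idea; the vacuum issue you flag is real but mild, and can be handled by splitting the potential and kinetic contributions exactly as you indicate. The remainder of your proof---the order of limits in Step~4, and the use of smoothness to pass from a.e.\ saturation to saturation at $t=0$---is correct.
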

\begin{proof}
Similarly as in the proof of Theorem \ref{thm smoothing and conv combo}, we introduce a mollifier $\varphi \in C^\infty(\R \times \T^n)$ with $\varphi > 0$ and $\int \varphi = 1$. For $\alpha > 0$ we define the scaled mollifier
\[
\varphi_\alpha(t, x) := \frac{1}{\alpha^{n+1}} \varphi\LC \frac{t}{\alpha}, \frac{x}{\alpha} \RC.
\]
We will choose
\be\label{prob space}
\Omega := \{\omega := (s,y)\in\R_+\times \mathbb T^n\}, \qquad d\mu(s,y) := \varphi_\alpha(s,y)\,ds\,dz,
\ee
and for $s < T/2$, define
\[
(\rho_\omega, V_\omega, \mathcal R_\omega, r_\omega)(\cdot, \cdot, \omega) := (\rho, V, \mathcal R, r)(\cdot + s, \cdot + y)
\]
on $[0, T/2] \times \T^n$. Thus $(\rho_\omega, V_\omega, \mathcal R_\omega, r_\omega)(\cdot, \cdot, \omega)$ is an $(\IE, \frac{T}{2})$-energy compatible subsolution for the compressible Euler equations. 

From the definition of $\Omega$ and $\mu$ we see that
\[
\mathbb{E}(f_\omega) = \varphi_\alpha \ast f.
\]
Applying Lemma \ref{lem conv combo comp} we obtain another $(\IE, \frac{T}{2})$-energy compatible subsolution $(\underline \rho, \underline V, \underline{\mathcal R}, \underline r)$, where
\begin{equation}\label{mollify}
\begin{split}
& (\underline \rho, \underline V) := \varphi_\alpha \ast (\rho, V), \\
& \underline{\mathcal R} := \varphi_\alpha \ast \LC \mathcal R +  \frac{V \otimes V}{\rho}\RC - \frac{\underline V \otimes \underline V}{\underline \rho}, \\ 
& \underline r := \varphi_\alpha \ast \LC r +  p(\rho) \RC - p(\underline \rho).
\end{split}
\end{equation}
We further know that $(\underline \rho, \underline V, \underline{\mathcal R}, \underline r)$ is a smooth energy compatible subsolution. The choice of $\varphi$ ensures that $\underline \rho > 0$. For any given $\varepsilon > 0$, by taking $\alpha$ sufficiently small we have
\be\label{ic close comp}
\LN \frac{\underline V^0}{\sqrt{\underline\rho^0}} - \frac{V^0}{\sqrt{\rho^0}} \RN^2_{L^2} + \LN \underline \rho^0 - \rho^0 \RN^\gamma_{L^\gamma} + \int_{\T^n} \LC \trace \underline{\mathcal R}^0 + \frac{\underline r^0}{\gamma -1} \RC \,dx < \frac{\varepsilon}{4}.
\ee

Now we can apply the convex combination method. As in the proof of Theorem \ref{thm smoothing and conv combo}, we will work with an atomic measure. For $\lambda \in (0, 1)$, set 
\[
\Omega := \{1, 2\}, \qquad \mu := \lambda \delta_{\omega = 1} + (1-\lambda) \delta_{\omega=2},
\]
and consider two $(\IE, \frac{T}{2})$-energy compatible subsolutions
\[
(\rho_1, V_1, \mathcal R_1, r_1) = (\hat \rho, 0, 0, 0), \quad (\rho_2, V_2, \mathcal R_2, r_2) =  (\underline \rho, \underline V, \underline{\mathcal R}, \underline r),
\]
where $\hat \rho$ is a constant such that
\[
\frac{p(\hat \rho)}{\gamma -1} = \frac{\IE}{|\T^n|}.
\]
Applying Lemma \ref{lem conv combo comp} again yields a smooth $(\IE, \frac{T}{2})$-energy compatible subsolution $(\widetilde \rho_1, \widetilde V_1, \widetilde{\mathcal R}_1, \widetilde r_1)$ with
\begin{equation*}
\begin{split}
& (\widetilde \rho_1, \widetilde V_1) = \LC \lambda \hat \rho + (1 - \lambda) \underline \rho, \lambda \underline V \RC, \\
& \widetilde{\mathcal R}_1 = (1 - \lambda) \underline{\mathcal R} + (1-\lambda) \LC \frac{\underline V \otimes \underline V}{\underline \rho} \RC - \frac{(1 - \lambda)^2 \underline V \otimes \underline V}{\lambda \hat \rho + (1-\lambda) \underline\rho}, \\
& \widetilde r_1 = (1 - \lambda) \underline r + \lambda p(\hat \rho) + (1 - \lambda) p(\underline \rho) - p\big( \lambda \hat \rho + (1-\lambda) \underline \rho \big) =: (1-\lambda) \underline r + \hat r.
\end{split}
\end{equation*}
From \eqref{ic close comp} and continuity it follows that for $\lambda$ sufficiently small
\be\label{ic close cc comp}
\LN \frac{\widetilde V^0_1}{\sqrt{\widetilde\rho^0_1}} - \frac{V^0}{\sqrt{\rho^0}} \RN^2_{L^2} + \LN \widetilde \rho^0_1 - \rho^0 \RN^\gamma_{L^\gamma} + \int_{\T^n} \LC \trace \widetilde{\mathcal R}_1^0 + \frac{\widetilde r_1^0}{\gamma -1} \RC \,dx < \frac{\varepsilon}{2}.
\ee

Strict convexity of $p$ indicates that 
\[
\widetilde r_1 \equiv 0 \quad \Longleftrightarrow \quad \underline r = \hat r \equiv 0. 
\] 
We claim that
\be\label{r nonzero}
\widetilde r_1 \not \equiv 0.
\ee
If $\underline r \equiv 0$, then from \eqref{mollify} we see that and convexity of $p$
\[
\underline r \equiv 0, \qquad \varphi_\alpha \ast p(\rho) - p(\varphi_\alpha\ast \rho) \equiv 0.
\]
The second identity yields that $\rho$ is a constant, specifically,
\begin{equation*}
\rho = \fint_{\T^n} \rho^0 \,dx.
\end{equation*}
When $\hat r \equiv 0$, it again follows from the strict convexity of $p$ that 
\[
\rho = \hat \rho. 
\]
Comparing the above two conditions, using the definition of $\hat \rho$ and applying Jensen's inequality we see that 
\be\label{jensen}
\fint_{\T^n} p(\rho^0)\,dx \ge p\LC \fint_{\T^n} \rho^0 \,dx \RC = p(\rho) = p(\hat \rho) = \frac{(\gamma - 1) \IE}{|\T^n|}.
\ee
On the other hand, \eqref{ic comp reg} implies that
\[
\fint_{\T^n} p(\rho^0)\,dx \le \frac{(\gamma - 1) \IE}{|\T^n|}.
\]
If $V^0 \not\equiv 0$ then the above must be a strict inequality, which contradicts \eqref{jensen}. 
Therefore $V^0 \equiv 0$ and equality in \eqref{jensen} must hold. Hence either $\rho^0$ is a constant or $p$ is linear on $\rho^0(\T^n)$. The explicit form of $p$ suggests the former, but this contradicts \eqref{ic comp reg}. Therefore \eqref{r nonzero} holds. 

Since we are using a nonvanishing mollifier, it is easy to see that 
\[
\varphi_\alpha \ast \widetilde r_1 > 0.
\]
Therefore applying mollification to $(\widetilde \rho_1, \widetilde V_1, \widetilde{\mathcal R}_1, \widetilde r_1)$ yields the desired smooth $(\IE, \frac{T}{2})$-energy compatible subsolution $(\widetilde \rho, \widetilde V, \widetilde{\mathcal R}, \widetilde r)$ with $\widetilde \rho > 0$. From \eqref{mollify} we know that they take the form
\begin{equation*}
\begin{split}
& (\widetilde \rho, \widetilde V) := \varphi_\alpha \ast (\widetilde \rho_1, \widetilde V_1), \\
& \widetilde{\mathcal R} := \varphi_\alpha \ast \LC \widetilde{\mathcal R}_1 +  \frac{\widetilde V_1 \otimes \widetilde V_1}{\widetilde \rho_1}\RC - \frac{\widetilde V \otimes \widetilde V}{\widetilde \rho}, \\ 
& \widetilde r := \varphi_\alpha \ast \LC \widetilde r_1 +  p(\widetilde \rho_1) \RC - p(\widetilde \rho) \ge \varphi_\alpha \ast \widetilde r_1 > 0. 
\end{split}
\end{equation*}
By taking $\alpha$ small enough and using \eqref{ic close comp} and \eqref{ic close cc comp} we prove \eqref{strict subsoln comp}.
\end{proof}

\subsection{Density of wild data for compressible Euler equations}\label{subsec density comp}

We can now prove Theorem \ref{thm main comp}.
\begin{proof}
The strategy is the same as in the incompressible case. We first regularize the data to $(\varrho^0_\varepsilon, U^0_\varepsilon) \in C^\infty(\T^n)$ with the property that
\be\label{comp ic reg}
\| \varrho^0_\varepsilon - \varrho^0 \|_{L^1(\T^n)} + \|U^0_\varepsilon - U^0\|_{L^1(\T^n)} < \varepsilon. 
\ee
From Jensen's inequality we have
\[
\int_{\T^n} E(\varrho^0_\varepsilon, U^0_\varepsilon) \,dx \le \int_{\T^n} E(\varrho^0, U^0) \,dx.
\]
Hence the regularized data $(\varrho^0_\varepsilon, U^0_\varepsilon)$ satisfies \eqref{comp ic}. In particular if we use a non-vanishing mollification kernel then $\varrho^0_\varepsilon > 0$.

On the other hand, defining $\Omega_\delta := \{x\in \T^n: \ \varrho^0(x) > \delta \}$ and $\Omega^0 := \{ x\in \T^n:\ \varrho^0(x) = 0 \}$ we see that for a fixed $\delta > 0$, as $\varepsilon \to 0$
\[
E(\varrho^0_\varepsilon, U^0_\varepsilon) \longrightarrow E(\varrho^0, U^0) \quad \text{a.e. }\ \Omega_\delta.
\]
Hence from Fatou's lemma we have
\begin{equation*}
\begin{split}
\limsup_{\varepsilon \to 0} \int_{\Omega_\delta \cup \Omega^0} E(\varrho^0_\varepsilon, U^0_\varepsilon) \,dx \ge \int_{\Omega_\delta \cup \Omega^0} E(\varrho^0, U^0) \,dx \to \int_{\T^n} E(\varrho^0, U^0) \,dx \quad \text{as} \quad \delta \to 0.
\end{split}
\end{equation*}
Therefore for $\varepsilon$ sufficiently small 
\be\label{comp ic energy}
\LN E(\varrho^0_\varepsilon, U^0_\varepsilon) - E(\varrho^0, U^0) \RN_{L^1(\T^n)} = \int_{\T^n}E(\varrho^0, U^0) \,dx - \int_{\T^n} E(\varrho^0_\varepsilon, U^0_\varepsilon) \,dx < \varepsilon.
\ee

From \eqref{comp ic reg} and \eqref{comp ic energy}, and further refining the mollification scale if necessary, we have 
\be\label{comp ic 1}
\|\varrho^0_\varepsilon - \varrho^0\|_{L^\gamma}^\gamma + \LN \frac{U^0_\varepsilon}{\sqrt{\varrho^0_\varepsilon}} - \frac{U^0}{\sqrt{\varrho^0}} \RN^2_{L^2}  < \frac{\varepsilon}{9}.
\ee

Denote
\[
\IE_\varepsilon := \int_{\T^n} E(\varrho^0_\varepsilon, U^0_\varepsilon) \,dx.
\]
We can apply Proposition \ref{prop e-subsoln comp} to find a $T>0$ and an $(\IE_\varepsilon, T)$-energy compatible subsolution $(\varrho_\varepsilon, U_\varepsilon, \mathcal R_\varepsilon, r_\varepsilon)$ with initial data $(\varrho_\varepsilon, U_\varepsilon)|_{t = 0} = (\varrho^0_\varepsilon, U^0_\varepsilon)$ and satisfying
\[
\mathcal R_\varepsilon \equiv 0, \quad r_\varepsilon \equiv 0 \quad \text{for}\quad t\in [0, T].
\]
Then we use Theorem \ref{thm smoothing and conv combo comp} to produce from $(\varrho_\varepsilon, U_\varepsilon, \mathcal R_\varepsilon, r_\varepsilon)$ a smooth $(\IE_\varepsilon, \frac{T}{2})$-energy compatible subsolution $(\widetilde \varrho_\varepsilon, \widetilde U_\varepsilon, \widetilde{\mathcal R}_\varepsilon, \widetilde r_\varepsilon)$ with initial data $(\widetilde \varrho^0_\varepsilon, \widetilde U^0_\varepsilon, \widetilde{\mathcal R}^0_\varepsilon, \widetilde r^0_\varepsilon)$ and satisfying (from \eqref{strict subsoln comp})
\be\label{comp ic 2}
\|\widetilde\varrho^0_\varepsilon - \varrho^0_\varepsilon\|_{L^\gamma}^\gamma + \LN \frac{\widetilde U^0_\varepsilon}{\sqrt{\widetilde \varrho^0_\varepsilon}} - \frac{U^0_\varepsilon}{\sqrt{\varrho^0_\varepsilon}} \RN^2_{L^2} + \int_{\T^n} \LC \textup{\trace} \widetilde{\mathcal R}^0_\varepsilon + \frac{\widetilde r^0_\varepsilon}{\gamma -1} \RC \,dx < \frac{\varepsilon}{9}.
\ee

Using the positivity of $\widetilde R_\varepsilon := \widetilde{\mathcal R}_\varepsilon + r_\varepsilon \id$ we may employ Theorem \ref{thm ci comp} to convex integrate. This way we obtain infinitely many initial data $(\rho^0, V^0)  \in L^\gamma(\T^n) \times L^{\frac{2\gamma}{\gamma - 1}}(\T^n)$ satisfying (from \eqref{ci data cond1 comp})
\be\label{comp ic 3}
 \|\rho^0 - \widetilde\varrho^0_\varepsilon\|_{L^\gamma}^\gamma + \LN \frac{V^0}{\sqrt{\rho^0}} - \frac{\widetilde U^0_\varepsilon}{\sqrt{\widetilde \varrho^0_\varepsilon}} \RN^2_{L^2} < \frac{\varepsilon}{9} + \int_{\T^n} \textup{tr} \widetilde R^0_\varepsilon \,dx,
\ee
each of which induces infinitely many weak solutions nfinitely many $(\rho, V) \in L^\infty(\R_+; L^\gamma(\T^n)) \times L^\infty(\R_+; L^{\frac{2\gamma}{\gamma+1}}(\T^n))$ to the compressible Euler equations such that 
\[
\int_{\T^n} E(\rho,V) \,dx \le \IE_\varepsilon = \int_{\T^n} E(\rho^0,V^0) \,dx  \quad \text{a.e. }\ t > 0.
\] 
Moreover for the initial data we have
\begin{align*}
\|\rho^0 - \varrho^0\|_{L^\gamma(\T^n)}^\gamma & + \LN \frac{V^0}{\sqrt{\rho^0}} - \frac{U^0}{\sqrt{\varrho^0}} \RN_{L^2(\T^n)}^2 \\
& \le 3\LC \|\rho^0 - \widetilde\varrho^0_\varepsilon\|_{L^\gamma}^\gamma + \|\widetilde\varrho^0_\varepsilon - \varrho^0_\varepsilon\|_{L^\gamma}^\gamma + \|\varrho^0_\varepsilon - \varrho^0\|_{L^\gamma}^\gamma \RC \\
& \quad + 3 \LC \LN \frac{V^0}{\sqrt{\rho^0}} - \frac{\widetilde U^0_\varepsilon}{\sqrt{\widetilde \varrho^0_\varepsilon}} \RN^2_{L^2} + \LN \frac{\widetilde U^0_\varepsilon}{\sqrt{\widetilde \varrho^0_\varepsilon}} - \frac{U^0_\varepsilon}{\sqrt{\varrho^0_\varepsilon}} \RN^2_{L^2} + \LN \frac{U^0_\varepsilon}{\sqrt{\varrho^0_\varepsilon}} - \frac{U^0}{\sqrt{\varrho^0}} \RN^2_{L^2} \RC \\
\text{by \eqref{comp ic 3} and \eqref{comp ic 1}} \quad & < 3 \LC \frac{\varepsilon}{9} + \int_{\T^n} \textup{tr} \widetilde R^0_\varepsilon \,dx + \|\widetilde\varrho^0_\varepsilon - \varrho^0_\varepsilon\|_{L^\gamma}^\gamma + \LN \frac{\widetilde U^0_\varepsilon}{\sqrt{\widetilde \varrho^0_\varepsilon}} - \frac{U^0_\varepsilon}{\sqrt{\varrho^0_\varepsilon}} \RN^2_{L^2} + \frac{\varepsilon}{9} \RC < \varepsilon,
\end{align*} 
where in the last inequality we used \eqref{comp ic 2} and the fact that $\gamma \le 1 + \frac{2}{n}$. Therefore we obtain \eqref{comp ic}, and hence complete the proof of the theorem. 
\end{proof}

\addtocontents{toc}{\protect\setcounter{tocdepth}{0}}
\section*{Acknowledgement} 
Robin Ming Chen is partially supported by the NSF grant: DMS 1907584. Alexis Vasseur is partially supported by the NSF grant: DMS 1907981.   Cheng Yu is is partially supported by the Collaboration Grants for Mathematicians from Simons Foundation.

\addtocontents{toc}{\setcounter{tocdepth}{1}} 
\bibliographystyle{siam}
\bibliography{Reference}

\end{document}